\documentclass[12pt,reqno]{amsart}

\usepackage[T1]{fontenc}
\usepackage[utf8]{inputenc}
\usepackage{amssymb}
\usepackage{amsmath}
\usepackage{mathptmx}
\usepackage{url}
\usepackage{hyperref}
\usepackage{pgf,tikz}
\usepackage{tikz-cd}

\let\oldlabel=\label
\def\prellabel{%
  \marginparsep=1em
  \marginparwidth=44pt
  \def\label##1{%
    \oldlabel{##1}%
    \ifmmode\else
      \ifinner\else
        \marginpar{{\footnotesize\ \\ \ttfamily ##1}}%
      \fi
    \fi
  }%
}

\def\opn#1#2{\def#1{\operatorname{#2}}}

\opn\chara{char}
\opn\rank{rank}
\opn\hilb{Hilb}

\opn\gr{gr}
\opn\Rees{\mathcal R}

\newtheorem{theorem}{Theorem}[section]
\newtheorem{definition}[theorem]{Definition}
\newtheorem{cons}[theorem]{Construction}
\newtheorem{lemma}[theorem]{Lemma}
\newtheorem{proposition}[theorem]{Proposition}
\newtheorem{example}[theorem]{Example}
\newtheorem{obs}[theorem]{Observation}
\newtheorem{question}[theorem]{Question}
\newtheorem{remark}[theorem]{Remark}

\newtheorem{conjecture}[theorem]{Conjecture}
\newtheorem{corollary}[theorem]{Corollary}
\newtheorem{setup}[theorem]{Setup}

\newtheorem*{notation*}{Notation}

\let\epsilon=\varepsilon
\let\phi=\varphi
\let\kappa=\varkappa

\newcommand{\vv}{\mathrm{v}}
\newcommand{\pol}{\operatorname{Pol}}
\newcommand{\Min}{\operatorname{Min}}
\newcommand{\depth}{\operatorname{depth}}

\newcommand{\reg}{\operatorname{reg}}

\newcommand{\link}{\operatorname{link}}

\newcommand{\pd}{\operatorname{pd}}

\def\addots{%
  \mathinner{%
    \mkern1mu\raise1pt\hbox{.}%
    \mkern2mu\raise4pt\hbox{.}%
    \mkern2mu\raise7pt\vbox{\kern7pt\hbox{.}}%
    \mkern1mu
  }%
}

\numberwithin{equation}{section}

\title[Powers of Alexander Duals of Chessboard Complexes]
{Powers of Alexander Duals of Stanley--Reisner Ideals of Chessboard Complexes}

\author{Arka Ghosh}
\email{arkaghosh1208@gmail.com}
\address{Department of Mathematics,
Indian Institute of Technology Bhubaneswar,
Bhubaneswar--752050, India}

\author{S. Selvaraja}
\email{selvas@iitbbs.ac.in}
\address{Department of Mathematics,
Indian Institute of Technology Bhubaneswar,
Bhubaneswar--752050, India}

\thanks{2020 Mathematics Subject Classification: 13F55, 05E45, 13D02}

\keywords{Chessboard complexes; Stanley-Reisner ideals; Alexander duality;
symbolic powers; ordinary powers; linear quotients; Castelnuovo-Mumford
regularity; Waldschmidt constants; $\vv$-numbers; weakly polymatroidal ideals.}

\begin{document}

\begin{abstract}
Let $\Delta_{m,n}$ be the chessboard complex, and let $J_{m,n}$ be the Stanley--Reisner ideal of its Alexander dual. We study the symbolic and ordinary powers of $J_{m,n}$, focusing on their homological and algebraic invariants. First, we characterize when $J_{m,n}^{(q)}$ has linear quotients. For the ordinary powers, we give sufficient conditions on $m$ and $n$ for $J_{m,n}^q$ to have linear quotients. We determine the maximal degree of a minimal generator and the Castelnuovo--Mumford regularity of $J_{m,n}^{(q)}$, obtaining explicit formulas for all $q\ge2$ and showing that both invariants have the same asymptotic slope. We also compute the initial degrees and Waldschmidt constant, proving that $\alpha(J_{m,n}^{(q)})$ is a degree-one quasi-polynomial of period two and $\widehat{\alpha}(J_{m,n})=mn/2$. Explicit formulas for the $\vv$-numbers of symbolic and ordinary powers are obtained. Finally, for every $m\ge3$, we show that $J_{m,2m-1}$ is not weakly polymatroidal, disproving a conjecture of Lu and Wang in this setting.
\end{abstract}
\maketitle

\section{Introduction}

Chessboard complexes and their relatives have been studied extensively in topological and algebraic combinatorics. They arise naturally in group theory, representation theory, commutative algebra, and algebraic topology; see, for example, \cite{J08,M03,Wa03}. Their combinatorial properties have also led to connections with algebraic invariants of associated monomial ideals. In this paper, we investigate the homological and algebraic properties of the Stanley-Reisner ideals of Alexander duals of chessboard complexes, focusing on their symbolic and ordinary powers.

Throughout, we assume $m \le n$. The \emph{chessboard complex} $\Delta_{m,n}$ is the simplicial complex whose vertices are the squares $x_{i,j}$, $1 \le i \le m$, $1 \le j \le n$, and whose faces correspond to placements of non-attacking rooks. Equivalently, $\Delta_{m,n}$ is the matching complex of the complete bipartite graph $K_{m,n}$ and the independence complex of the rook graph $R_{m,n}$. It is pure of dimension $m-1$, since every partial non-attacking placement extends to one with exactly $m$ rooks. The complex $\Delta_{m,n}$ is flag. The relative sizes of $m$ and $n$ play a central role in its structure and will be key to our algebraic analysis.
Garst \cite{garst1979cohen} initiated the study of chessboard complexes, proving that $\Delta_{m,n}$ is Cohen-Macaulay if and only if $n \ge 2m-1$. Ziegler \cite{Z94} strengthened this by showing shellability under the same condition. Since $\Delta_{m,n}$ is pure, this threshold fundamentally governs its homological properties.

Let $I_{\Delta^\vee}$ denote the Stanley--Reisner ideal of the Alexander dual of a simplicial complex $\Delta$. For the chessboard complex, set
\[
J_{m,n} := I_{\Delta_{m,n}^{\vee}} \subseteq R = \mathbb{K}[x_{i,j} \mid 1 \le i \le m,\ 1 \le j \le n],
\]
where $\mathbb{K}$ is a field.
By the Eagon-Reiner theorem \cite{eagon}, $\Delta_{m,n}$ is Cohen-Macaulay if and only if $J_{m,n}$ has a linear resolution. Moreover, $\Delta_{m,n}$ is shellable if and only if $J_{m,n}$ has linear quotients \cite{HerzogsBook}. Thus the threshold $n=2m-1$ is reflected directly in the homological properties of $J_{m,n}$.

This motivates the study of the symbolic and ordinary powers of $J_{m,n}$. For an ideal $I \subseteq R$, let $\Min(I)$ denote the set of minimal primes of $I$. For $q \ge 1$, the $q$-th \emph{symbolic power} of $I$ is
$I^{(q)} = \bigcap_{\mathfrak p \in \Min(I)} \left( I^q R_{\mathfrak p} \cap R \right).$
If $I$ is a squarefree monomial ideal with irredundant prime decomposition $I = \mathfrak p_1 \cap \cdots \cap \mathfrak p_r$, then $I^{(q)} = \mathfrak p_1^q \cap \cdots \cap \mathfrak p_r^q$. Symbolic powers of ideals, particularly their algebraic and homological properties and their comparison with ordinary powers, have been studied extensively; see the survey \cite{DDAGHN}.

We first classify when the symbolic powers of $J_{m,n}$ have linear quotients. If $n \ge 2m-1$, then all symbolic powers have linear quotients. In the range $m < n < 2m-1$, a parity phenomenon occurs: the even symbolic powers have linear quotients, while the odd symbolic powers do not. In the diagonal case $m=n$, none of the symbolic powers has linear quotients. This yields the following classification.

\begin{theorem}[Theorem~\ref{main-classification}]
Let $\Delta = \Delta_{m,n}$ be the chessboard complex. Then, for every $q \ge 1$, the ideal $J_{m,n}^{(q)}$ has linear quotients if and only if one of the following holds:
\begin{enumerate}
    \item $n \ge 2m-1$;
    \item $m < n < 2m-1$ and $q$ is even.
\end{enumerate}
In particular, if $m=n$, then $J_{m,n}^{(q)}$ does not have linear quotients for any $q \ge 1$.
\end{theorem}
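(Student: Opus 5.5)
The plan is to translate $J_{m,n}^{(q)}$ into a cover-ideal problem on the rook graph. Since $\Delta_{m,n}$ is flag, its minimal nonfaces are the attacking pairs $\{x_{i,j},x_{i,k}\}$ and $\{x_{i,j},x_{l,j}\}$. By Alexander duality,
\[
J_{m,n}=\bigcap_{\{u,v\}\in E(R_{m,n})}(x_u,x_v),
\]
so $J_{m,n}$ is the cover ideal of the rook graph $R_{m,n}$. Hence
\[
J_{m,n}^{(q)}=\bigcap_{\{u,v\}\in E(R_{m,n})}(x_u,x_v)^q
\]
is spanned by the monomials $x^{\mathbf a}$ with $a_u+a_v\ge q$ for every attacking pair. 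Its minimal generators are the minimal $q$-covers of $R_{m,n}$. I will first describe these generators explicitly. In each row and in each column at most one square can receive weight $<q/2$. So a minimal $q$-cover is determined by a partial rook placement $P$ together with small weights $a_u<q/2$ on $P$, complemented by the forced values $q-a_u$ (or $\lceil q/2\rceil$) elsewhere in the rows and columns meeting $P$. When $q$ is even, the constant vector $(q/2)\cdot\mathbf 1$ is among the generators. When $q$ is odd no constant vector works, and the generators "remember" the facets of $\Delta_{m,n}$, i.e.\ the complements of maximal rook placements.

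For sufficiency I will order the generators first by total degree and then, within each degree, by a reverse-lexicographic order on the underlying rook placement $P$ together with its weight vector. The shelling order of Ziegler for $\Delta_{m,n}$ with $n\ge 2m-1$ serves as the template for comparing placements. For $q=1$ this is exactly the statement that shellability of $\Delta_{m,n}$ gives linear quotients of $J_{m,n}$, recalled in the introduction. For general $q$ and $n\ge 2m-1$, I will check the colon ideal $(u_1,\dots,u_{s-1}):u_s$. For each earlier generator $u_r$ I want a variable $x_w\in(u_1,\dots,u_{s-1}):u_s$ dividing $u_r/\gcd(u_r,u_s)$. The plan is to obtain it by a single exchange move: either lower the weight on one square of $P$ and raise its row/column partners, or slide a rook within its row to a free column. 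The freedom to slide a rook is guaranteed exactly by the column count $n\ge 2m-1$, as in Ziegler's argument.

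For $m<n<2m-1$ and $q=2k$ even, the extra generator $x^{k\mathbf 1}$ and the "balanced" generators near it give enough slack that a rook exchange is never needed. In this case every colon should be generated by single variables obtained from weight-lowering moves alone. I expect this verification, namely producing the order and the variable in every colon ideal for all $q$, to be the main obstacle. I would first work it out for $q=2$ and $q=3$ by hand and then induct on $q$ via $J^{(q)}\supseteq J^{(q-2)}\cdot x^{\mathbf 1}$.

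For necessity I use that linear quotients imply componentwise linearity, and that componentwise linearity is preserved by monomial localization. Inverting a variable $x_v$ in a cover ideal yields the symbolic power of the cover ideal of $G\setminus N[v]$. For $R_{m,n}$ this deletes a row and a column, giving $J_{m-1,n-1}^{(q)}$ in fresh variables. I will also use the colon identity
\[
J_{m,n}^{(2k+1)}:x^{k\mathbf 1}=J_{m,n},
\]
which is justified via the lowest-degree component. Together these reduce the odd case with $n<2m-1$ to the base statement that $J_{m,n}$ is not componentwise linear, which holds because $\Delta_{m,n}$ is not Cohen--Macaulay (Garst, Eagon--Reiner). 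For $m=n$ and even $q$, I will localize down to $R_{2,2}$, a $4$-cycle, or to $R_{3,3}$. There I will exhibit directly a graded component of $J^{(q)}$ whose minimal free resolution has a nonlinear first syzygy, namely two generators of the same degree with a degree-$2$ syzygy not generated by linear ones. The remaining care is to ensure the localizations stay in the regime $n<2m-1$ (respectively $m=n$) when reducing.
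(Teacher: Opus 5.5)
Your necessity direction can be repaired, but the sufficiency direction, which is the real content of the theorem, is not proved.

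\textbf{Sufficiency.} You never actually specify an order on $\mathcal G(J_{m,n}^{(q)})$; ``by degree, then reverse-lexicographic on the placement and weights, modelled on Ziegler'' does not determine one. The claim that each colon $(u_1,\dots,u_{s-1}):u_s$ is generated by variables coming from a single weight-lowering or rook-sliding move is asserted, not checked, and you yourself flag it as the main obstacle. The induction via $J^{(q)}\supseteq J^{(q-2)}x^{\mathbf 1}$ gives no mechanism either. A containment, or even the factorization $J^{(q)}=J^{(2)}J^{(q-2)}$, does not carry linear quotients from smaller exponents to larger ones. The heuristics (``$n\ge 2m-1$ lets rooks slide'', ``$x^{(q/2)\mathbf 1}$ gives slack'') also do not locate where the hypotheses are really used.

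The missing idea in the paper is to polarize, $(J_{m,n}^{(q)})^{\mathrm{pol}}=J(R^q_{m,n})$ (Lemma~\ref{lq-cl}), and prove $R^q_{m,n}$ vertex decomposable. The vertices $x_{1,1,1},\dots,x_{1,n,1}$ are shed one at a time by a pigeonhole argument that needs only $n>m$, and their links are $R^q_{m-1,n-1}$. Once all level-one copies are removed, one lands on $R^{q-2}$-type graphs (Lemmas~\ref{tech-lm} and~\ref{tech-lm0}). So the recursion lowers $q$ by two. For odd $q$ it ends at $q=1$, where Ziegler's bound $n\ge 2m-1$ is required; for even $q$ it ends at $q=0$, and only $m<n$ is used. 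That is the precise form of your parity intuition. Some such recursive decomposition, or a fully verified explicit order, is what your proposal lacks.

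\textbf{Necessity.} Two statements you rely on are false as stated.
\begin{enumerate}
\item Inverting a single $x_v$ in $J(G)^{(q)}$ gives $J(G\setminus\{v\})^{(q)}$, not $J(G\setminus N_G[v])^{(q)}$. To get the latter you must invert all $x_u$ with $u\in N_G(v)$.
\item Neither monomial localization nor colon by a monomial preserves componentwise linearity in general. Let $\Delta$ be the path with edges $\{a',a\},\{a,v\},\{v,b\},\{b,b'\}$. Then $I_{\Delta^\vee}$ has linear quotients, but setting $x_v=1$ (equivalently, forming $I_{\Delta^\vee}:x_v$) gives $(x_ax_{a'},x_bx_{b'})$, which has no linear resolution.
\end{enumerate}

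Your reductions are rescued by polarization together with Theorem~\ref{ind-lm}.
\begin{enumerate}
\item For $v=x_{i,j}$, inverting the variables of $N_G(v)$ amounts to deleting all copies of $N_G(v)$ from $R^q_{m,n}$. What remains is $R^q_{m-1,n-1}$ plus the isolated vertices $x_{i,j,1},\dots,x_{i,j,q}$. Its independence complex is the star of the face $\{x_{i,j,1},\dots,x_{i,j,q}\}$, that is, a cone over its link $\operatorname{Ind}(R^q_{m-1,n-1})$.
\item Your identity $J^{(2k+1)}:x^{k\mathbf 1}=J$ holds, since $(x_u,x_w)^{2k+1}:(x_ux_w)^k=(x_u,x_w)$. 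After polarization it amounts to deleting levels $1,\dots,k$ of $R^{2k+1}_{m,n}$. This leaves a cone over the link of the face formed by all copies at levels $\ge k+2$, and that link is $\operatorname{Ind}$ of the level-$(k+1)$ copy, namely $\Delta_{m,n}$.
\end{enumerate}
Since sequential Cohen--Macaulayness passes to links, both steps are valid. The second is exactly what the paper imports from \cite[Lemma~3.1]{SS23}.

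With this repair, your diagonal argument works for every $q$ at once. Take repeated links down to $R_{2,2}\cong C_4$, where $J(C_4)^{(q)}=(x_{1,1}x_{2,2},x_{1,2}x_{2,1})^q$ is generated in degree $2q$ but has syzygies in degree $2q+2$. This is a self-contained alternative to the paper's treatment, which handles $q=1,2$ and then invokes \cite[Theorem~3.2]{SS23}.
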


For ordinary powers, a complete characterization of the linear-quotient property remains open. We establish the following partial results. When $m=2$ and $n \ge 3$, every ordinary power $J_{2,n}^q$ has linear quotients and hence a linear resolution; see Theorem~\ref{2-ord}. In contrast, for $m=n \ge 2$, the second power $J_{m,m}^2$ does not have a linear resolution; see Theorem~\ref{equal-not}. Moreover, $J_{3,4}^2$ has linear quotients, whereas $J_{4,5}^2$ does not have a linear resolution. These results indicate that the linear-quotient property of ordinary powers does not follow the same simple pattern as that of symbolic powers, and a complete classification remains an open problem; see Question~\ref{ord-que}.

The Castelnuovo--Mumford regularity (or simply regularity), denoted by $\reg(-)$, of symbolic powers of homogeneous ideals has been an active area of research; see \cite{DHNT20} and references therein. For $J_{m,n}$, the regularity is known when $n \ge 2m-1$: in this range,
$\reg(J_{m,n}) = m(n-1),$
since $J_{m,n}$ has a linear resolution. For $m \le n < 2m-1$, Bj\"orner, Lov\'asz, Vre\'cica, and \v{Z}ivaljevi\'c \cite[Corollary~1.4]{BLVZ94} proved
$\reg(J_{m,n}) \le mn - \left\lfloor \frac{m+n+1}{3} \right\rfloor.$
As $\Delta_{m,n}$ is not Cohen--Macaulay in this range, $J_{m,n}$ does not have a linear resolution, so
$\reg(J_{m,n}) \ge m(n-1)+1.$
Hence
$m(n-1)+1 \le \reg(J_{m,n}) \le mn - \left\lfloor \frac{m+n+1}{3} \right\rfloor.$

For a homogeneous ideal $I$, let $\omega(I)$ denote the maximum degree of a minimal generator. We now determine $\omega(J_{m,n}^{(q)})$. While the function $q \mapsto \omega(I^{(q)})$ need not be linear for general monomial ideals (see \cite[Theorem~5.15]{DHNT20}), the symbolic powers of $J_{m,n}$ exhibit a particularly simple behavior. Moreover, for $q \ge 2$, the regularity of $J_{m,n}^{(q)}$ exhibits a dichotomy depending on the board dimensions: in the non-diagonal case $m < n$, the regularity coincides with the maximal generator degree, whereas in the diagonal case $m = n$, it exceeds that degree by one.

\begin{theorem}[Theorem~\ref{deg-sym} and Theorem~\ref{reg-sym}]
For every $q \ge 1$,
\begin{enumerate}
    \item $\omega\!\left(J_{m,n}^{(q)}\right) = qm(n-1).$
    \item $
\reg\!\left(J_{m,n}^{(q)}\right) =
\begin{cases}
qm(n-1), & \text{if } m < n,\\[2mm]
qm(m-1)+1, & \text{if } m = n.
\end{cases}
$
\end{enumerate}

\end{theorem}

These formulas determine the asymptotic invariant $\delta(J_{m,n})$. By \cite{DHNT20},
$\lim_{q \to \infty} \frac{\reg(I^{(q)})}{q} = \lim_{q \to \infty} \frac{\omega(I^{(q)})}{q} = \delta(I).$
For squarefree monomial ideals, $\delta(I) \ge \omega(I)$ \cite[Theorem~3.3, Theorem~3.6 and Lemma~4.3]{DHNT20}. Hence, by Theorems~\ref{deg-sym} and~\ref{reg-sym},
$\delta(J_{m,n}) = m(n-1) = \omega(J_{m,n}).$
Thus $J_{m,n}$ attains the bound $\delta(I) \ge \omega(I)$.
As a consequence, we obtain the following comparison between symbolic and ordinary regularities; see Corollary~\ref{reg-comparision}. In particular,
$\reg\!\left(J_{2,n}^{(q)}\right) = \reg\!\left(J_{2,n}^q\right)$ for all $ q \ge 1,\ n \ge 2,$
and, if $m < n$,
$\reg\!\left(J_{m,n}^{(q)}\right) \le \reg\!\left(J_{m,n}^q\right)$ for all  $q \ge 2.$
For the diagonal case,
$\reg\!\left(J_{m,m}^{(2)}\right) \le \reg\!\left(J_{m,m}^2\right).$

We next study the initial degrees of the symbolic powers. For a homogeneous ideal $I$, let
$\alpha(I) := \min\{\deg(f) \mid 0 \neq f \in I\}$
denote the initial degree of $I$. The asymptotic behavior of these initial degrees is captured by the Waldschmidt constant
\[
\widehat{\alpha}(I) = \lim_{q \to \infty} \frac{\alpha(I^{(q)})}{q}.
\]
For cover ideals of graphs, the Waldschmidt constant can be determined from the second symbolic power; see \cite[Corollary~4.4]{DG20}. For the ideals $J_{m,n}$, we determine $\alpha(J_{m,n}^{(q)})$ for every $q \ge 1$, and hence obtain an explicit formula for the Waldschmidt constant. In particular, the function $q \mapsto \alpha(J_{m,n}^{(q)})$ is a quasi-polynomial of degree one and period two.

\begin{theorem}[Theorem~\ref{min-deg}]
For every $q \ge 1$,
\[
\alpha\!\left(J_{m,n}^{(q)}\right) =
\begin{cases}
\dfrac{mnq}{2}, & \text{if } q \text{ is even},\\[2mm]
\dfrac{mn(q-1)}{2} + m(n-1), & \text{if } q \text{ is odd}.
\end{cases}
\]
Consequently,
$\widehat{\alpha}\!\left(J_{m,n}\right) = \frac{mn}{2}.$
\end{theorem}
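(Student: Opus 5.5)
The plan is to turn the computation of $\alpha(J_{m,n}^{(q)})$ into an integer linear optimization problem on the rook graph $R_{m,n}$, and to solve it row by row. First I identify $J_{m,n}$ concretely. Since $\Delta_{m,n}$ is flag, its minimal nonfaces are exactly the edges of $R_{m,n}$: pairs of squares lying in a common row or a common column. The Stanley--Reisner ideal of the Alexander dual is the intersection of the primes generated by the minimal nonfaces, so
\[
J_{m,n}=\bigcap_{\{u,v\}\in E(R_{m,n})}(x_u,x_v),
\]
which is the cover ideal of $R_{m,n}$. By the description of symbolic powers of squarefree monomial ideals recalled in the introduction, a monomial $x^{a}$ lies in $J_{m,n}^{(q)}$ if and only if $a_u+a_v\ge q$ for every pair of squares $u,v$ in a common row or column. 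So $\alpha(J_{m,n}^{(q)})$ is the minimum of $\sum a_u$ over nonnegative integer vectors satisfying these inequalities. Throughout I assume $n\ge 2$.

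\emph{Upper bounds (constructions).} For $q$ even, I take $a\equiv q/2$, which has degree $mnq/2$. For $q=2k+1$ odd, I use $m\le n$ to choose $m$ non-attacking squares, one in each row and in distinct columns. I set $a=k$ on these squares and $a=k+1$ everywhere else. Two squares carrying the value $k$ never share a row or a column, so every edge sum is at least $2k+1$. The degree is $mn(k+1)-m=\frac{mn(q-1)}{2}+m(n-1)$.

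\emph{Lower bounds.} Here the key observation is that the set of "deficient" squares is a rook placement, and each deficient square forces a surplus in its row.

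In the even case, put $b=a-q/2$, so that $b_u+b_v\ge 0$ on every edge. The squares with $b<0$ are pairwise non-adjacent, so each row contains at most one of them. If a row contains such a square with $b=-t<0$, the other $n-1$ squares in that row have $b\ge t$, so the row sum is at least $(n-2)t\ge 0$. Otherwise every $b$ in the row is nonnegative. Summing over rows gives $\sum b\ge 0$, that is, $\sum a\ge mnq/2$.

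In the odd case, put $b=a-k$, so that $b_u+b_v\ge 1$. The set $S$ of squares with $b\le 0$ is again independent, hence meets each row at most once. If a row meets $S$ in a square with $b=-t\le 0$, the other $n-1$ squares have $b\ge 1+t$, so the row sum is at least $(n-1)(1+t)-t\ge n-1$. If a row misses $S$, then integrality gives $b\ge 1$ on the whole row, so the row sum is at least $n$. Hence $\sum b\ge m(n-1)$, which is exactly the claimed value.

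The step needing the most care is this odd-case lower bound. It uses integrality in an essential way: a positive $b_u$ is automatically at least $1$. This is also why the odd values exceed the linear function $mnq/2$, producing the period-two quasi-polynomial. Finally, $\alpha(J_{m,n}^{(q)})/q$ equals $mn/2$ along even $q$ and tends to $mn/2$ along odd $q$, so $\widehat{\alpha}(J_{m,n})=mn/2$.
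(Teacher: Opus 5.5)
Your proof is correct, and it takes a genuinely different route from the paper's. The paper computes only two base cases combinatorially. It gets $\alpha(J_{m,n})=m(n-1)$ because each row is a clique, and $\alpha(J_{m,n}^{(2)})=mn$ by counting minimal vertex covers of the polarized graph $R_{m,n}^2$ row by row; that count is essentially your even-case charging argument at $q=2$. It then invokes the Herzog--Hibi--Trung theorem that the symbolic Rees algebra of the cover ideal of a graph is generated in degree at most $2$. This yields $J_{m,n}^{(2k)}=(J_{m,n}^{(2)})^k$ and $J_{m,n}^{(2k+1)}=J_{m,n}(J_{m,n}^{(2)})^k$, and the formula follows from additivity of $\alpha$ on products of monomial ideals. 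You instead treat $\alpha(J_{m,n}^{(q)})$ directly as the integer program $\min\sum a_u$ subject to $a_u+a_v\ge q$ on edges. You run the row-by-row charging for every $q$ at once and give explicit optimal exponent vectors: the constant vector $q/2$, or a rook placement carrying $k$ with $k+1$ elsewhere. Your route is self-contained, needing neither polarization nor the structure theorem for the symbolic Rees algebra. It also shows where the period-two behaviour comes from: the linear relaxation has optimum $mnq/2$, attained by $a\equiv q/2$, and for odd $q$ the excess arises only from integrality, since a positive $b_u$ is at least $1$. The paper's route gives in exchange an explicit description of every symbolic power in terms of $J_{m,n}$ and $J_{m,n}^{(2)}$. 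That reduces all computations to $q\le 2$ and explains the period-two quasi-linear shape of $\alpha$ for cover ideals of arbitrary graphs, at the cost of relying on a nontrivial external theorem. Your explicit hypothesis $n\ge 2$ is also appropriate and necessary. For $m=n=1$ the ideal is the unit ideal and the formula fails, a point the paper leaves implicit.
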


We next study the $\vv$-number, an invariant defined in terms of the associated primes of a homogeneous ideal. For a homogeneous ideal $I \subseteq R$, set
\[
\vv(I) = \min\{d \mid \text{ there exists } f \in R_d \text{ such that } I : f \in \operatorname{Ass}_R(R/I)\}.
\]
The $\vv$-number was introduced by Cooper et al.~\cite{CSTPV20} and has since been studied in various algebraic and combinatorial settings; see, for example, \cite{Con24,FS26,KNS25}. Its asymptotic behavior for symbolic powers has also been investigated. By \cite[Theorem~1.3]{FS24}, the function $q \mapsto \vv(I^{(q)})$ is eventually quasi-linear when $I$ is a monomial ideal, while \cite[Corollary~3.6]{KNS25} shows that
$\lim_{q \to \infty} \frac{\vv(I^{(q)})}{q} = \widehat{\alpha}(I).$
Consequently, there exist nonnegative integers $q_1, d_1$ and integers $b_1, \ldots, b_{d_1}$ such that
$\vv(I^{(q)}) = \widehat{\alpha}(I)q + b_i$
for all $q \ge q_1$ satisfying $i \equiv q \pmod{d_1}$. In general, determining the constants $b_i$ and the threshold $q_1$ explicitly can be difficult.
However, for some classes of ideals, these constants have been computed; see, for example, \cite{VP26} for cover ideals of complete graphs and complete bipartite graphs, and \cite{CHJV26} for edge ideals of certain classes of cycles and complete graphs.
For the ideals $J_{m,n}$, we determine these quantities explicitly. In fact,
\[
\vv\!\left(J_{m,n}^{(q)}\right) = \widehat{\alpha}(J_{m,n})q + b_i, \qquad i \equiv q \pmod{2},
\]
for all $q \ge 1$, with $\widehat{\alpha}(J_{m,n}) = \frac{mn}{2}$. More precisely, the constants $b_1$ and $b_2$ are given explicitly as follows (Theorem~\ref{asyv}).
For every $q \ge 1$,
\[
\vv\!\left(J_{m,n}^{(q)}\right) = \widehat{\alpha}(J_{m,n})q + b_i,
\qquad
i =
\begin{cases}
1, & q \text{ odd},\\
2, & q \text{ even},
\end{cases}
\]
where
$\widehat{\alpha}(J_{m,n}) = \frac{mn}{2},$
and
\begin{enumerate}
\item if $m < n$, then
$b_1 = m(n-1)-1-\frac{mn}{2},$
$b_2 = m+n-4;$

\item if $m = n$, then
$b_1 = \frac{m^2-2m}{2},$
$b_2 = 2m-4.$

\end{enumerate}
Equivalently,
\[
\vv\!\left(J_{m,n}^{(q)}\right)
=
\begin{cases}
m(n-1)-1+\dfrac{q-1}{2}mn,
& \text{if } m < n \text{ and } q \text{ is odd},\\[2mm]
m+n-4+\dfrac{q}{2}mn,
& \text{if } m < n \text{ and } q \text{ is even},\\[2mm]
\dfrac{m^2q+m^2-2m}{2},
& \text{if } m = n \text{ and } q \text{ is odd},\\[2mm]
\dfrac{q}{2}m^2+2m-4,
& \text{if } m = n \text{ and } q \text{ is even}.
\end{cases}
\]

The explicit formulas obtained here reveal a clear dichotomy in the asymptotic behavior of the invariants $\alpha$, $\vv$, $\omega$, and $\reg$; a complete comparison is given in Table~\ref{comparison}. We further establish nonvanishing results for certain graded Betti numbers of $R/J_{m,n}^{(q)}$, which are obtained recursively from those of $R/J_{m-1,n-1}^{(q)}$; see Corollary~\ref{betti-nonvanishing}.

For ordinary powers, the situation is more regular. Let $I$ be a homogeneous ideal. Conca \cite{Con24} and, independently, Ficarra and Sgroi \cite{FS26} proved that
\[
\vv(I^q) = \alpha(I)q + b(I)
\]
for some constant $b(I)$ and for all sufficiently large $q$. The constant $b(I)$ has been determined explicitly for several classes of ideals; see, for example, \cite{BMS24}. For the ideals $J_{m,n}$, we obtain an explicit formula for the $\vv$-number of the ordinary powers, valid for all $q \ge 1$.

\begin{theorem}[Theorem~\ref{vnumber-ordinary}]
For every $q \ge 1$,
\[
\vv\!\left(J_{m,n}^q\right) =
\begin{cases}
qm(n-1)-1, & \text{if } m \ne n,\\[2mm]
qm(n-1), & \text{if } m = n.
\end{cases}
\]
\end{theorem}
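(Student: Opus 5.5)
Throughout I use the combinatorial description of $J_{m,n}$. Its minimal generators are $x^{\overline F}:=\prod_{(i,j)\notin F}x_{i,j}$, one for each facet $F$ of $\Delta_{m,n}$, i.e. each placement of $m$ non-attacking rooks. All of them have degree $m(n-1)$. Since $\Delta_{m,n}$ is flag with minimal non-faces the attacking pairs, we have $\Min(J_{m,n})=\{(x_a,x_b): a\neq b \text{ in a common row or column}\}$, and $J^{(q)}=\bigcap (x_a,x_b)^q$.

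For a monomial $f$ I write $S(f)$ for the set of squares not in the support of $f$. A squarefree-type check gives $x^{\overline F}\mid f$ if and only if $S(f)\subseteq F$. I use the standard fact that for monomial ideals the $\vv$-number is attained by monomials: $\vv(I)=\min\{\deg f : f \text{ monomial},\ I:f=\mathfrak p\in\operatorname{Ass}(R/I)\}$.

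\emph{Lower bound.} Suppose $J^q:f=\mathfrak p$ with $\mathfrak p$ prime. Then $f\notin J^q$ while $x_af\in J^q$ for some variable $x_a\in\mathfrak p$. Since $J^q$ is generated in the single degree $qm(n-1)$, this gives $\deg f\ge qm(n-1)-1$.

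For $m=n$ I sharpen this by one. Suppose equality held. Every associated prime contains at least two variables, so there are $x_a\neq x_b$ in $\mathfrak p$ with $x_af$ and $x_bf$ both minimal generators of $J^q$. Each such generator is a product of $q$ complements of permutation matrices, so every row and every column of its exponent matrix sums to $q(m-1)$. If $a$ and $b$ lie in different columns, the column sums of $x_af$ and $x_bf$ differ by one in column $a$, a contradiction. Otherwise $a$ and $b$ lie in the same column and hence in different rows, and the same argument with row sums gives the contradiction. Hence $\vv(J_{m,m}^q)\ge qm(m-1)$.

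\emph{Upper bound: construction for $q=1$.} Fix $\mathfrak p=(x_{1,1},x_{1,2})$. I choose a set $M$ of non-attacking rooks avoiding row $1$ and columns $1,2$, and put $S=M\cup\{(1,1),(1,2)\}$ and $f'=\prod_{(i,j)\notin S}x_{i,j}$.
\begin{itemize}
\item If $m<n$, take $M$ of size $m-1$ on rows $2,\dots,m$ and columns $3,\dots,n$; this is possible since $n-2\ge m-1$. Then $\deg f'=m(n-1)-1$.
\item If $m=n$, take $M$ of size $m-2$ on columns $3,\dots,m$, leaving one row $r\ge2$ empty. Then $\deg f'=m(m-1)$.
\end{itemize}
In both cases $S\setminus\{(1,1)\}$ and $S\setminus\{(1,2)\}$ are non-attacking. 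For $m=n$ they extend to a facet using row $r$ and the remaining free column, column $1$ or $2$. Hence $x_{1,1}f',x_{1,2}f'\in J$. On the other hand, $J:f'$ is generated by the products $\prod_{c\in T}x_c$ where $S\setminus T$ is a face. Any such $T$ must meet $\{(1,1),(1,2)\}$, since these two squares attack each other. So $J:f'=\mathfrak p$.

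\emph{Upper bound: passage to $q$.} Let $g=x^{\overline G}$ for a facet $G$ containing $(1,1)$; then $g$ has degree exactly $1$ in the variables of $\mathfrak p$. Set $f=g^{q-1}f'$. Clearly $x_{1,1}f,\,x_{1,2}f\in J^q$, so $\mathfrak p\subseteq J^q:f$.

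Conversely, take a monomial $h\notin\mathfrak p$. The total degree of $fh$ in $x_{1,1},x_{1,2}$ is $q-1$, because $f'$ contributes $0$ and $h$ contributes $0$. Hence $fh\notin\mathfrak p^q\supseteq J^{(q)}\supseteq J^q$. Since $J^q:f$ is a monomial ideal, this shows $J^q:f=\mathfrak p$. We get $\deg f=(q-1)m(n-1)+\deg f'$, which equals $qm(n-1)-1$ if $m<n$ and $qm(m-1)$ if $m=n$, matching the lower bounds.

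The only delicate point is the diagonal lower bound, which the row/column-sum parity argument above handles. The rest consists of routine verifications of the face conditions.
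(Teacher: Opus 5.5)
Your proof is correct, and it takes a more elementary route than the paper. For $m\ne n$ the paper argues via the Rees algebra: $R_{m,n}$ is perfect, so the Rees algebra of $J(R_{m,n})$ is normal, which gives strong persistence, and it then applies \cite[Proposition~3.4]{BMS24} with input $\vv(J_{m,n})=m(n-1)-1$ taken from Theorem~\ref{asyv}. For $m=n$ it gets the upper bound from $\vv(I^{q+1})\le\vv(I^q)+\omega(I)$ \cite[Remark~5.4]{Fi25}, seeded with $\vv(J_{m,m})=m(m-1)$ (again from Theorem~\ref{asyv}), and the lower bound from a column-sum count.

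You differ in three places. First, you get the off-diagonal lower bound from the trivial bound $\vv\ge\alpha-1$ for an equigenerated ideal. Second, you build the $q=1$ witness $f'$ directly from a partial rook placement, which replaces the one-edge induced-subgraph maximization in Theorem~\ref{asyv}. Third, you lift it to $f=g^{q-1}f'$ and pin down $J^q:f=\mathfrak p$ using only $J^q\subseteq\mathfrak p^q$ and the fact that $fh$ has degree $q-1$ in $x_{1,1},x_{1,2}$. The detour through $J^{(q)}$ is unnecessary, since $J\subseteq\mathfrak p$ already gives $J^q\subseteq\mathfrak p^q$.

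Your diagonal lower bound uses the same row/column-sum idea as the paper. Yours is phrased for an arbitrary pair of distinct variables in $\mathfrak p$, split by shared column or not, which is cleaner than the paper's reduction to $\mathfrak p=(x_{1,1},\dots,x_{1,p})$.

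Your route gives a self-contained proof with an explicit witness for every $q$, independent of Theorem~\ref{asyv} and of the persistence and subadditivity results. The paper's route places the formula inside the general theory of $\vv$-numbers of powers, at the cost of depending on those external results and on the harder symbolic-power computation for the base case.
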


We compare the $\vv$-numbers of the ordinary powers with their Castelnuovo-Mumford regularity. By Corollary~\ref{vnumber-reg},
$\vv\!\left(J_{m,n}^q\right) \le \reg\!\left(R/J_{m,n}^q\right)$
if $m \ne n$ and $q \ge 1$, and
$\vv\!\left(J_{m,m}^2\right) \le \reg\!\left(R/J_{m,m}^2\right).$
Moreover, the formulas for the $\vv$-numbers of the ordinary and symbolic powers show that their difference is unbounded as $m,n$ vary. More precisely, for every fixed $q \ge 2$, the difference
$\vv\!\left(J_{m,n}^q\right) - \vv\!\left(J_{m,n}^{(q)}\right)$
is unbounded as $m,n$ vary.

We conclude with a result concerning weak polymatroidality. Lu and Wang \cite{LW24} proposed the following conjecture.

\begin{conjecture}[{\cite[Conjecture~1]{LW24}}]
\label{lw-conj}
If $\Delta$ is a pure vertex decomposable simplicial complex, then $I_{\Delta^\vee}$ is weakly polymatroidal.
\end{conjecture}

Lu and Wang verified the conjecture for several classes of simplicial complexes, including independence complexes of cactus, bipartite, and chordal graphs. The chessboard complexes form a natural family to test the conjecture. By \cite{Z94}, $\Delta_{m,n}$ is pure and vertex decomposable precisely when $n \ge 2m-1$. Nevertheless, even in this range, the conjecture fails in general. More precisely, we prove the following.

\begin{theorem}[Theorem~\ref{conter-ex}]
For every $m \ge 3$, the ideal $J_{m,2m-1}$ is not weakly polymatroidal.
\end{theorem}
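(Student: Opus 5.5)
The plan is to work directly with the exchange property and to show that it fails for \emph{every} ordering of the variables. Recall that a monomial ideal $I$ generated in a single degree is weakly polymatroidal if, for some ordering $x_{(1)}>x_{(2)}>\cdots$ of the variables, the following holds. Whenever $u,v\in G(I)$ agree in the exponents of $x_{(1)},\dots,x_{(t-1)}$ and $\deg_{x_{(t)}}u>\deg_{x_{(t)}}v$, there is some $s>t$ with $x_{(t)}v/x_{(s)}\in I$.

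For $J=J_{m,n}$ the minimal generators are $u_F=\prod_{x_{ij}\notin F}x_{ij}$, with $F$ ranging over the facets of $\Delta_{m,n}$. These facets are the placements of $m$ non-attacking rooks, i.e.\ injections $[m]\to[n]$.

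\textbf{Step 1: translate the exchange condition.} Take facets $F,G$ with $u=u_F$, $v=u_G$. Then $\deg_{x_t}u>\deg_{x_t}v$ means $t\in G\setminus F$. The monomial $x_tv/x_s$ is a squarefree generator exactly when $s\notin G$ and $(G\setminus\{t\})\cup\{s\}$ is again a facet.

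Write $t=(i,c_0)$. Every facet meets every row, so $s$ must lie in row $i$. Its column must be free for $G\setminus\{t\}$, so $s=(i,c)$ with $c\notin\sigma_G([m])$. For $n=2m-1$ this leaves exactly $m-1$ candidates.

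Hence, to violate the property, it suffices to produce facets $F,G$ and a variable $t$ with the following three properties:
\begin{enumerate}
\item[(a)] $t\in G\setminus F$;
\item[(b)] $F$ and $G$ contain the same variables among those preceding $t$;
\item[(c)] all $m-1$ variables $(i,c)$, with $c$ a column unused by $G$, precede $t$.
\end{enumerate}

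\textbf{Step 2: construct $G$ for an arbitrary order.} Fix an ordering and a row, say row $1$. Let $t=(1,c_0)$ be the $m$-th variable of row $1$ in the ordering. Let $C$ be the set of the $m-1$ columns whose row-$1$ variables precede $t$, and $D$ the set of the $m-1$ columns whose row-$1$ variables follow $t$.

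Let $G$ consist of $t$ together with any bijection $\pi\colon\{2,\dots,m\}\to D$. The free columns of $G$ are then exactly $C$, so condition (c) holds automatically.

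\textbf{Step 3: construct $F$.} Set
\[
F=\bigl(G\setminus\{t,(r,\pi(r))\}\bigr)\cup\{(1,\pi(r)),(r,c)\}
\]
for some row $r\ge 2$ and some column $c\in C\cup\{c_0\}$, $c\ne\pi(r)$. This $F$ is a rook placement not containing $t$. It agrees with $G$ before $t$ provided the three variables $(r,\pi(r))$, $(1,\pi(r))$ and $(r,c)$ all come after $t$. The variable $(1,\pi(r))$ is after $t$ by the choice of $D$.

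\textbf{Main obstacle.} The hard step is choosing $t$ (and the row) together with the bijection $\pi$ so that for some $r$ both $(r,\pi(r))$ and some $(r,c)$ with $c\in C\cup\{c_0\}$ come after $t$. I would first try to exploit the freedom in the choice of row and of $t$. For example, take row $i$ and $t$ so that $t$ is as early as possible among all variables subject to being $m$-th in its row. This forces many variables in other rows to come after $t$. Then a pigeonhole count over the $(m-1)\times(m-1)$ block $\{2,\dots,m\}\times D$ and the $(m-1)\times m$ block $\{2,\dots,m\}\times(C\cup\{c_0\})$ should supply the required $r$, $\pi(r)$ and $c$. Since $\pi$ only needs to be a bijection onto $D$, one good entry in the first block can be extended by Hall's condition to a full bijection. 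This is where $m\ge3$ should be used: for $m=2$ the blocks are too small and the counting fails.

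If the direct argument becomes unwieldy, the fallback is a lexicographic case analysis on the position of the earliest variable overall. Using the $S_m\times S_n$ symmetry of $\Delta_{m,2m-1}$, one may assume this variable is $x_{1,1}$ and then recurse.
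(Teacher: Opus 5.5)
\textbf{Verdict: genuine gap.} Your Steps 1--3 are correct, and they match the shape of the pairs the paper uses. The facet $G$ contains the $m$-th variable $t$ of some row, and its other rooks occupy the columns of that row's later variables, so every exchange candidate precedes $t$. The facet $F$ comes from $G$ by a $2\times 2$ swap with one other row. No Hall argument is needed: the rooks of the remaining rows are common to $F$ and $G$, so any extension of $r\mapsto\pi(r)$ works.

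The problem is that you leave the decisive existence step open, and the route you sketch for it does not work as stated. Take $t=(i,c_0)$ to be the earliest among the $m$-th variables of the rows. Each row $r\neq i$ then has at least $m$ variables after $t$, but these can be exactly the $m$ variables $(r,c)$ with $c\in C\cup\{c_0\}$. The count is tight: $(m-1)m$ late entries against a block $\{r\neq i\}\times(C\cup\{c_0\})$ of the same size, so pigeonhole gives nothing. Concretely, for $m=3$ begin the order with $x_{1,1},x_{1,2},x_{2,4},x_{2,5},x_{3,4},x_{3,5},x_{1,3}$ and continue arbitrarily. Then $t=x_{1,3}$ and $D=\{4,5\}$, and every $(r,d)$ with $r\ge 2$, $d\in D$ precedes $t$. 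So your construction fails for this $t$ and every $\pi$.

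What is missing is the case split in the paper's proof. Let $V$ be the set of variables and $S$ the set of the first $m-1$ variables of each row. Your $t$ is $\min(V\setminus S)$, so every variable outside $S\cup\{t\}$ comes after $t$.
\begin{itemize}
\item Suppose some $(r,d)$ with $r\neq i$ and $d\in D$ comes after $t$. Row $r$ has $m$ variables outside $S$ and $|D|=m-1$, so some $(r,c)$ with $c\in C\cup\{c_0\}$ lies outside $S$, hence after $t$. Step~3 then applies with $\pi(r)=d$. The freedom in $c$ that you built in is genuinely needed here: $(r,c_0)$ can lie in $S$ before $t$, so one cannot simply take $c=c_0$, as the paper's Case~1 does.
\item Otherwise, for every $r\neq i$ the first $m-1$ variables of row $r$ are exactly those in the columns $D$, and you must change the distinguished row. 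Let $(e,q_1)$ be the earliest variable outside $S$ and outside row $i$. It is the $m$-th variable of row $e$. The earlier variables of row $e$ lie in the columns $D$, and the later ones in columns $q_2,\dots,q_m$, where $\{q_1,\dots,q_m\}=C\cup\{c_0\}$. Pick a third row $r\notin\{i,e\}$. Both $(r,q_1)$ and $(r,q_2)$ lie outside $S$ and outside row $i$, hence after $(e,q_1)$. Your Step~3 with distinguished row $e$, $t'=(e,q_1)$, $\pi(r)=q_2$ and $c=q_1$ then gives a violating pair.
\end{itemize}

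The third row is exactly where $m\ge 3$ enters; your remark that for $m=2$ ``the blocks are too small'' misplaces this. The first case works for $m=2$ as well. For $m=2$ the statement is in fact false, since the proof of Theorem~\ref{2-ord} shows that $J_{2,3}$ is weakly polymatroidal.
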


The paper is organized as follows. Section~\ref{preliminaries} collects the necessary background on graphs, simplicial complexes, monomial ideals, and the homological invariants used throughout. In Section~\ref{sec-lq}, we study the linear-quotient properties of the symbolic and ordinary powers of $J_{m,n}$. Section~\ref{reg-sec} is devoted to the maximal degrees of minimal generators and the Castelnuovo-Mumford regularity of the symbolic powers. In Section~\ref{vnumber-sec}, we determine the initial degrees, the Waldschmidt constant, and the $\vv$-numbers of the symbolic and ordinary powers, as well as certain nonvanishing results for their graded Betti numbers. The final section concerns weak polymatroidality and establishes a counterexample to the conjecture of Lu and Wang.

\section{Notation and Preliminaries}\label{preliminaries}

\subsection{Graphs and simplicial complexes}

Throughout, all graphs are finite and simple. For a graph $G$, we denote its vertex and edge sets by $V(G)$ and $E(G)$, respectively. For $x \in V(G)$, let
$N_G(x) = \{y \in V(G) \mid \{x,y\} \in E(G)\},$
and let $N_G[x] = N_G(x) \cup \{x\}$. For $A \subseteq V(G)$, we write
$N_G[A] = \bigcup_{x \in A} N_G[x].$
The degree of $x$ is denoted by $\deg_G(x)$; thus, $\deg_G(x) = |N_G(x)|$. When the graph is clear from the context, we write $\deg(x)$ for $\deg_G(x)$. If $U \subseteq V(G)$, then $G \setminus U$ denotes the induced subgraph of $G$ on $V(G) \setminus U$. Two graphs $G_1$ and $G_2$ are said to be \emph{isomorphic}, denoted by $G_1 \cong G_2$, if and only if there exists a bijection $\varphi : V(G_1) \to V(G_2)$ such that $\{x,y\} \in E(G_1)$ if and only if $\{\varphi(x),\varphi(y)\} \in E(G_2)$ for all $x,y \in V(G_1)$.
A subset $C \subseteq V(G)$ is called a \emph{vertex cover} of $G$ if every edge of $G$ has at least one endpoint in $C$. A vertex cover $C$ is called \emph{minimal} if no proper subset of $C$ is a vertex cover.

A \emph{simplicial complex} $\Delta$ on a finite vertex set $V(\Delta)$ is a collection of subsets of $V(\Delta)$ such that $F \in \Delta$ and $F' \subseteq F$ imply $F' \in \Delta$. The elements of $\Delta$ are called \emph{faces}, and the maximal faces under inclusion are called \emph{facets}. The dimension of a face $F$ is $\dim F = |F|-1$, and the dimension of $\Delta$ is
$\dim \Delta = \max\{\dim F \mid F \in \Delta\}.$
The simplicial complex $\Delta$ is called \emph{pure} if all its facets have the same dimension.
For a vertex $x \in V(\Delta)$, the \emph{deletion} of $x$ from $\Delta$ is the simplicial complex
$\Delta \setminus x = \{F \in \Delta \mid x \notin F\},$
while the \emph{link} of $x$ in $\Delta$ is
$\link_\Delta(x) = \{F \in \Delta \mid x \notin F,\ F \cup \{x\} \in \Delta\}.$
More generally, for a face $F \in \Delta$, its link is
$\link_\Delta(F) = \{F' \in \Delta \mid F' \cap F = \emptyset,\ F' \cup F \in \Delta\}.$

The \emph{chessboard complex} $\Delta_{m,n}$ is the simplicial complex associated with an $m \times n$ chessboard. Throughout the paper, we assume that $n \ge m$. Let $x_{i,j}$ denote the square in the $i$th row and $j$th column, where $1 \le i \le m$ and $1 \le j \le n$. Thus,
$V(\Delta_{m,n}) = \{x_{i,j} \mid 1 \le i \le m,\ 1 \le j \le n\}.$
A subset $A \subseteq V(\Delta_{m,n})$ is a face of $\Delta_{m,n}$ if and only if no two vertices of $A$ lie in the same row or the same column. Equivalently, the faces of $\Delta_{m,n}$ correspond to placements of non-attacking rooks on the $m \times n$ chessboard. Figure~\ref{fig:chessboard} illustrates this interpretation. In particular, if $x_{1,2} \in A$, then no other vertex in the first row or the second column can belong to $A$. Since $n \ge m$, every facet contains exactly $m$ vertices. Consequently, $\dim \Delta_{m,n} = m-1$.

\begin{figure}[ht]
\centering
\begin{tikzpicture}[scale=0.5]

\draw[thick] (0,0) rectangle (8,8);

\foreach \x in {2,4,6}
    \draw (\x,0)--(\x,8);
\foreach \y in {2,4,6}
    \draw (0,\y)--(8,\y);

\fill[gray!20] (0,6) rectangle (8,8);
\fill[gray!20] (2,0) rectangle (4,8);

\draw[thick] (0,0) rectangle (8,8);
\foreach \x in {2,4,6}
    \draw (\x,0)--(\x,8);
\foreach \y in {2,4,6}
    \draw (0,\y)--(8,\y);

\node at (1,7) {$x_{1,1}$};
\node at (3,7) {$x_{1,2}$};
\node at (5,7) {$\cdots$};
\node at (7,7) {$x_{1,n}$};

\node at (1,5) {$x_{2,1}$};
\node at (3,5) {$x_{2,2}$};
\node at (5,5) {$\cdots$};
\node at (7,5) {$x_{2,n}$};

\node at (1,3) {$\vdots$};
\node at (3,3) {$\ddots$};
\node at (5,3) {$\ddots$};
\node at (7,3) {$\vdots$};

\node at (1,1) {$x_{m,1}$};
\node at (3,1) {$x_{m,2}$};
\node at (5,1) {$\cdots$};
\node at (7,1) {$x_{m,n}$};

\filldraw (3,6.5) circle (3pt);

\end{tikzpicture}
\caption{The vertex $x_{1,2}$ and the squares in the same row or column. The shaded squares cannot belong to a face containing $x_{1,2}$.}
\label{fig:chessboard}
\end{figure}

A simplicial complex $\Delta$ is called \emph{vertex decomposable} if either $\Delta$ is a simplex, or there exists a vertex $x \in V(\Delta)$ such that $\Delta \setminus x$ and $\link_\Delta(x)$ are vertex decomposable and no face of $\link_\Delta(x)$ is a facet of $\Delta \setminus x$. Such a vertex is called a \emph{shedding vertex}.

A simplicial complex $\Delta$ is called \emph{shellable} if its facets can be ordered as $F_1, \ldots, F_t$ such that, for each $k > 1$,
$\langle F_k \rangle \cap \langle F_1, \ldots, F_{k-1} \rangle$
is pure of dimension $\dim(F_k)-1$, where $\langle F_1, \ldots, F_k \rangle$ denotes the simplicial complex generated by $F_1, \ldots, F_k$.

Let $\mathbb{K}$ be a field. A $d$-dimensional simplicial complex $\Delta$ is called \emph{Cohen-Macaulay over $\mathbb{K}$} if
$\widetilde{H}_i(\link_\Delta(F); \mathbb{K}) = 0$
for every $F \in \Delta$ and every $i < d-|F|$. Equivalently, the Stanley-Reisner ring $\mathbb{K}[\Delta]$ is Cohen-Macaulay.

For $i \ge -1$, let $\Delta^{[i]}$ denote the \emph{pure $i$-skeleton} of $\Delta$, that is, the subcomplex generated by the faces of dimension $i$. The simplicial complex $\Delta$ is called \emph{sequentially Cohen-Macaulay over $\mathbb{K}$} if $\Delta^{[i]}$ is Cohen-Macaulay over $\mathbb{K}$ for every $i$ such that $\Delta^{[i]} \ne \emptyset$. We will use the standard implications
\[
\text{vertex decomposable} \Longrightarrow \text{shellable} \Longrightarrow \text{sequentially Cohen-Macaulay}.
\]

For a graph $G$, the \emph{independence complex} of $G$, denoted by $\operatorname{Ind}(G)$, is the simplicial complex whose faces are the independent sets of $G$. We say that $G$ is \emph{vertex decomposable}, \emph{shellable}, or \emph{sequentially Cohen-Macaulay} if $\operatorname{Ind}(G)$ has the corresponding property.

We shall use the following characterization of vertex decomposable graphs.

\begin{lemma}[{\cite[Lemma~4]{Wood2}}]
Let $G$ be a graph. Then $G$ is vertex decomposable if and only if either $G$ has no edges, or there exists $x \in V(G)$ such that
\begin{enumerate}
\item $G \setminus \{x\}$ and $G \setminus N_G[x]$ are vertex decomposable; and
\item no independent set of $G \setminus N_G[x]$ is a maximal independent set of $G \setminus \{x\}$.
\end{enumerate}
\end{lemma}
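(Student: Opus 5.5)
The plan is to translate the definition of vertex decomposability for $\operatorname{Ind}(G)$ directly into graph language. No new idea beyond two standard identifications should be needed. Throughout, write $\Delta=\operatorname{Ind}(G)$. Every singleton $\{x\}$ is an independent set, so $V(\Delta)=V(G)$, and choosing a vertex of $\Delta$ is the same as choosing a vertex of $G$.

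First I will establish, for any $x\in V(G)$, the two identities
\[
\Delta\setminus x=\operatorname{Ind}(G\setminus\{x\}),\qquad \link_\Delta(x)=\operatorname{Ind}(G\setminus N_G[x]).
\]
The first holds because a set not containing $x$ is independent in $G$ exactly when it is independent in the induced subgraph $G\setminus\{x\}$. For the second, take $F$ with $x\notin F$. Then $F\cup\{x\}$ is independent if and only if $F$ is independent and contains no neighbour of $x$. That is, $F\subseteq V(G)\setminus N_G[x]$ and $F$ is independent in the induced subgraph $G\setminus N_G[x]$. With these identities, "$\Delta\setminus x$ and $\link_\Delta(x)$ are vertex decomposable" becomes condition~(1), by the definition of vertex decomposability of a graph.

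Next I translate the base case and the shedding condition. The complex $\Delta$ is a simplex if and only if $V(G)$ itself is independent, that is, if and only if $G$ has no edges. The facets of $\Delta\setminus x$ are precisely the maximal independent sets of $G\setminus\{x\}$, and the faces of $\link_\Delta(x)$ are precisely the independent sets of $G\setminus N_G[x]$. So "no face of $\link_\Delta(x)$ is a facet of $\Delta\setminus x$" is verbatim condition~(2). Combining these translations with the recursive definition gives both directions of the equivalence at once.

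The only point needing care, and the step I expect to be the main (minor) obstacle, is the degenerate bookkeeping in the recursion. The complexes $\operatorname{Ind}(G\setminus\{x\})$ and $\operatorname{Ind}(G\setminus N_G[x])$ may have vertex sets smaller than expected, or may be the complex $\{\emptyset\}$ when $N_G[x]=V(G)$. I will check that the convention "a simplex, including $\{\emptyset\}$, is vertex decomposable" matches "an edgeless graph, including the empty graph, is vertex decomposable". I will also check that vertex decomposability of $\operatorname{Ind}(H)$ depends only on $H$ as an induced subgraph, so that the identifications above respect the inductive structure. With that settled, the lemma follows by a straightforward induction on $|V(G)|$.
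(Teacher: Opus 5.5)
Your proposal is correct and follows the standard argument; the paper gives no proof of its own and only cites Woodroofe, where the lemma is justified by the same identities $\operatorname{Ind}(G)\setminus x=\operatorname{Ind}(G\setminus\{x\})$ and $\link_{\operatorname{Ind}(G)}(x)=\operatorname{Ind}(G\setminus N_G[x])$, together with the observation that $\operatorname{Ind}(G)$ is a simplex $\iff$ $G$ has no edges. The closing induction on $|V(G)|$ is unnecessary: a graph is vertex decomposable by definition exactly when $\operatorname{Ind}(G)$ is, so once the identities are in place the two sides of the equivalence match term by term.
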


We shall repeatedly use the following standard inheritance properties of simplicial complexes and their independence complexes.

\begin{theorem}[{\cite{BFH15,VanVilla,Ho77,ProvLouis}}]\label{ind-lm}
Let $\Delta$ be a simplicial complex. If $\Delta$ is vertex decomposable, shellable, or sequentially Cohen-Macaulay, then $\link_{\Delta}(F)$ has the same property for every face $F \in \Delta$.

In particular, let $G$ be a graph and let $A \subseteq V(G)$ be an independent set. If $G$ is sequentially Cohen-Macaulay (respectively, shellable, vertex decomposable), then the induced subgraph $G \setminus N_G[A]$ is sequentially Cohen-Macaulay (respectively, shellable, vertex decomposable). In particular, for every $x \in V(G)$, $G \setminus N_G[x]$ has the same property as $G$.
\end{theorem}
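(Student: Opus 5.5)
The statement is Theorem~\ref{ind-lm}, which collects standard inheritance results. I would prove the simplicial part by induction on $|F|$, and then deduce the graph part by identifying links of independence complexes.

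\textbf{Reduction to single vertices.} Since $\link_\Delta(F\cup\{x\}) = \link_{\link_\Delta(F)}(x)$ whenever $x\notin F$ and $F\cup\{x\}\in\Delta$, it suffices to treat $F=\{x\}$ and iterate.

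\textbf{Vertex decomposable case.} I would induct on the number of vertices of $\Delta$. If $\Delta$ is a simplex, its links are simplices. Otherwise let $v$ be a shedding vertex.
\begin{itemize}
\item If $x=v$, the link is vertex decomposable by definition.
\item If $x\neq v$, then $v$ is a shedding vertex of $\link_\Delta(x)$, provided $v$ lies in that link. Indeed,
\[
\link_\Delta(x)\setminus v=\link_{\Delta\setminus v}(x), \qquad \link_{\link_\Delta(x)}(v)=\link_{\link_\Delta(v)}(x),
\]
and both are vertex decomposable by induction. For the facet condition: if $G\in\link_{\link_\Delta(v)}(x)$ were a facet of $\link_{\Delta\setminus v}(x)$, then $G\cup\{x\}$ would be a face of $\link_\Delta(v)$ that is a facet of $\Delta\setminus v$, a contradiction.
\item If $v\notin\link_\Delta(x)$, then $\link_\Delta(x)=\link_{\Delta\setminus v}(x)$, and induction applies.
\end{itemize}

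\textbf{Shellable case.} Given a shelling $F_1,\dots,F_t$ of $\Delta$, restrict it to the facets containing $x$ and delete $x$ from each. The claim is that this restricted order is a shelling of $\link_\Delta(x)$, following Björner's argument in the nonpure setting. The key check is the following. Suppose $F_i,F_k\ni x$ with $i<k$. The shelling condition gives some $j<k$ and a vertex $y\in F_k$ with
\[
F_i\cap F_k\subseteq F_j\cap F_k = F_k\setminus\{y\}.
\]
Since $x\in F_i\cap F_k$, we have $y\neq x$, so $F_j\ni x$. Hence the intersection with earlier facets stays of codimension one after removing $x$.

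\textbf{Sequentially Cohen--Macaulay case.} I would use Duval's characterization via pure skeleta. For each $i$ we have
\[
\link_\Delta(x)^{[i]}=\link_{\Delta^{[i+1]}}(x).
\]
Links of Cohen--Macaulay complexes are Cohen--Macaulay by Reisner's criterion, since links of faces of a link are links in $\Delta$. Checking the displayed equality of pure skeleta is the main delicate step. A face of dimension $i$ in the link lifts to a face of dimension $i+1$ containing $x$, and conversely.

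\textbf{Graph case.} For an independent set $A$ of $G$,
\[
\link_{\operatorname{Ind}(G)}(A)=\operatorname{Ind}\bigl(G\setminus N_G[A]\bigr).
\]
Indeed, $B$ is disjoint from $A$ with $A\cup B$ independent if and only if $B$ is an independent set avoiding $N_G[A]$. Applying the simplicial part to $\operatorname{Ind}(G)$ gives the graph statement, with the final clause being the case $A=\{x\}$.
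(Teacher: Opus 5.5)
Your proposal is correct. The paper gives no proof of this statement and simply quotes it from the cited literature, and your argument reconstructs the standard proofs behind those citations: the shedding-vertex induction using $\link_\Delta(x)\setminus v=\link_{\Delta\setminus v}(x)$ and $\link_{\link_\Delta(x)}(v)=\link_{\link_\Delta(v)}(x)$, the restriction of a nonpure shelling to the facets through $x$, the identity $\link_\Delta(x)^{[i]}=\link_{\Delta^{[i+1]}}(x)$ combined with Reisner's criterion, and $\link_{\operatorname{Ind}(G)}(A)=\operatorname{Ind}(G\setminus N_G[A])$.

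Since the paper takes the pure-skeleton condition as its definition of sequential Cohen--Macaulayness, you do not need Duval's theorem. It is enough to note, as your lifting remark shows, that $\link_\Delta(x)^{[i]}\neq\emptyset$ forces $\{x\}\in\Delta^{[i+1]}$, so $\Delta^{[i+1]}\neq\emptyset$ and the hypothesis on $\Delta$ applies to that skeleton.
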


\subsection{Monomial ideals, symbolic powers, and homological invariants}

Let $R = \mathbb{K}[x_1, \ldots, x_N]$ be a standard graded polynomial ring over a field $\mathbb{K}$, and let $M$ be a finitely generated graded $R$-module. We write its minimal graded free resolution as
\[
\mathbf{F} : \quad
0 \longrightarrow F_p \longrightarrow \cdots \longrightarrow F_1 \longrightarrow F_0 \longrightarrow M \longrightarrow 0,
\]
where $F_i = \bigoplus_{j \in \mathbb{Z}} R(-j)^{\beta_{i,j}(M)}$. The integers $\beta_{i,j}(M)$ are the \emph{graded Betti numbers} of $M$; equivalently,
$\beta_{i,j}(M) = \dim_{\mathbb{K}} \operatorname{Tor}_i^R(M, \mathbb{K})_j.$
The \emph{projective dimension} of $M$, denoted by $\pd_R(M)$, is
$\pd_R(M) = \max\{i \mid \beta_{i,j}(M) \ne 0 \text{ for some } j\}.$
The \emph{depth} of $M$, denoted by $\depth_R(M)$, is the length of a maximal $M$-regular sequence contained in the homogeneous maximal ideal $\mathfrak{m} = (x_1, \ldots, x_N)$. Since $R$ is a regular ring of dimension $N$, the Auslander-Buchsbaum formula gives
$\pd_R(M) + \depth_R(M) = N.$
The \emph{Castelnuovo-Mumford regularity}, or simply the \emph{regularity}, of $M$, denoted by $\reg(M)$, is defined by
$\reg(M) = \max\{j-i \mid \beta_{i,j}(M) \ne 0\}.$ For a homogeneous ideal $I \subseteq R$, we use the convention $\reg(R/I) = \reg(I)-1$.

Let $I \subseteq R = \mathbb{K}[x_1, \ldots, x_N]$ be a homogeneous ideal, and denote by $\mathcal{G}(I)$ its minimal homogeneous generating set. Suppose that $I$ is generated in a single degree $d$. We say that $I$ has a \emph{linear resolution} if $\beta_{i,j}(I) = 0$ whenever $j \ne i+d$. In this case, $\reg(I) = d$. We say that $I$ has \emph{linear quotients} if the elements of $\mathcal{G}(I)$ can be ordered as $f_1, \ldots, f_t$ such that $(f_1, \ldots, f_{k-1}) : f_k$ is generated by linear forms for every $k = 2, \ldots, t$.

For each $d$, let $I_{\langle d \rangle}$ denote the ideal generated by the homogeneous elements of $I$ of degree $d$. The ideal $I$ is called \emph{componentwise linear} if $I_{\langle d \rangle}$ has a linear resolution for every $d$.

We shall use the following standard implications; see, for example, \cite{HerzogsBook}:
\[
\text{linear quotients} \Longrightarrow \text{componentwise linear},
\]
and
\[
\text{equigenerated ideal with linear quotients} \Longrightarrow \text{linear resolution} \Longrightarrow \text{componentwise linear}.
\]

Let $\Delta$ be a simplicial complex on the vertex set $V(\Delta) = \{x_1, \ldots, x_N\}$, and let $R = \mathbb{K}[x_1, \ldots, x_N]$. The \emph{Stanley-Reisner ideal} of $\Delta$ is the squarefree monomial ideal
$I_\Delta = \left( x_{i_1} \cdots x_{i_r} \mid \{x_{i_1}, \ldots, x_{i_r}\} \notin \Delta \right) \subseteq R.$
The quotient ring $\mathbb{K}[\Delta] = R/I_\Delta$ is called the \emph{Stanley-Reisner ring} of $\Delta$. The \emph{Alexander dual} of $\Delta$ is the simplicial complex
$\Delta^\vee = \{V(\Delta) \setminus F \mid F \notin \Delta\}.$
We write $I_{\Delta^\vee}$ for the Stanley-Reisner ideal of the Alexander dual.

\begin{definition}
The \emph{rook graph} $R_{m,n}$ associated with an $m \times n$ chessboard is the graph with vertex set
$V(R_{m,n}) = \{x_{i,j} \mid 1 \le i \le m,\ 1 \le j \le n\},$
where two distinct vertices $x_{i,j}$ and $x_{k,\ell}$ are adjacent if and only if $i=k$ or $j=\ell$. Thus, two vertices are adjacent precisely when the corresponding squares of the chessboard lie in a common row or a common column.
\end{definition}

\begin{obs}
The chessboard complex $\Delta_{m,n}$ is a flag complex. Indeed, $\Delta_{m,n} = \operatorname{Ind}(R_{m,n})$, and hence its Stanley-Reisner ideal is generated by quadratic monomials:
$I_{\Delta_{m,n}} = I(R_{m,n}).$
\end{obs}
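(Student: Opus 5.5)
The statement to prove is the Observation that $\Delta_{m,n}=\operatorname{Ind}(R_{m,n})$. It follows that $\Delta_{m,n}$ is flag and that $I_{\Delta_{m,n}}=I(R_{m,n})$.

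The plan is to compare faces directly. A subset $A\subseteq V(\Delta_{m,n})$ is a face of $\Delta_{m,n}$ exactly when no two of its vertices share a row or a column. By the definition of $R_{m,n}$, two distinct vertices $x_{i,j},x_{k,\ell}$ share a row or column exactly when they are adjacent in $R_{m,n}$. Hence $A$ is a face if and only if no two elements of $A$ are adjacent, which means $A$ is an independent set of $R_{m,n}$. This gives $\Delta_{m,n}=\operatorname{Ind}(R_{m,n})$ as simplicial complexes on the same vertex set.

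Flagness comes from the same description. Suppose a set $A$ has every $2$-subset in $\Delta_{m,n}$. Then no pair in $A$ is adjacent, so $A$ is independent and therefore a face. Thus every minimal nonface has size two, since vertices themselves are faces.

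For the ideal, I use that $I_\Delta$ is generated by the monomials of the minimal nonfaces, and a nonface generator that is not minimal is redundant. The minimal nonfaces of $\operatorname{Ind}(R_{m,n})$ are exactly the edges $\{x,y\}$ of $R_{m,n}$. So $I_{\Delta_{m,n}}=(xy\mid \{x,y\}\in E(R_{m,n}))=I(R_{m,n})$, which is generated by quadrics.

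There is no real obstacle here. The only point needing care is noting that the generators of $I_\Delta$ from non-minimal nonfaces are multiples of edge monomials, since any nonface contains an adjacent pair.
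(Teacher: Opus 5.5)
Your proposal is correct and follows the same route as the paper, whose entire justification is the identification $\Delta_{m,n}=\operatorname{Ind}(R_{m,n})$ (non-attacking rook placements are exactly the independent sets of the rook graph), from which flagness and $I_{\Delta_{m,n}}=I(R_{m,n})$ follow. You simply spell out the routine details that the paper leaves implicit: singletons are faces, the minimal nonfaces are exactly the edges, and monomials of non-minimal nonfaces are redundant generators.
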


We now recall the notion of the cover ideal of a graph.

\begin{definition}\label{def-cover-ideal}
For a graph \(G\), we write
$J(G) := I_{\operatorname{Ind}(G)^\vee}.$
This is the \emph{cover ideal} of \(G\). Equivalently,
\[
J(G) = \bigcap_{\{x_i,x_j\} \in E(G)} (x_i, x_j),
\]
and its minimal monomial generators correspond precisely to the minimal vertex covers of \(G\).
\end{definition}

We shall use the following fundamental results relating combinatorial properties of a simplicial complex to homological properties of the Stanley-Reisner ideal of its Alexander dual.

\begin{theorem}[Eagon-Reiner, \cite{eagon}; Terai, \cite{Terai00}; \cite{HerzogsBook}]
\label{ER-Terai}
Let $\Delta$ be a simplicial complex on the vertex set $\{x_1, \ldots, x_N\}$, and let $R = \mathbb{K}[x_1, \ldots, x_N]$. Then
\begin{enumerate}
    \item $\mathbb{K}[\Delta]$ is Cohen-Macaulay if and only if $I_{\Delta^\vee}$ has a linear resolution;

    \item $\pd_R(\mathbb{K}[\Delta]) = \reg\!\left(I_{\Delta^\vee}\right)$;

    \item $\Delta$ is shellable if and only if $I_{\Delta^\vee}$ has linear quotients.
\end{enumerate}
\end{theorem}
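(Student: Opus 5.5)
We would prove the three parts separately. Parts (1) and (2) reduce to Hochster's formula combined with combinatorial Alexander duality. Part (3) is a direct translation between a shelling order and an order of the generators. Throughout, we use two standard facts. First, the facets of $\Delta$ are in bijection with the minimal generators of $I_{\Delta^\vee}$, via $F \mapsto u_F := \prod_{x \notin F} x$. Second, Hochster's formula holds in the multigraded form
\[
\beta_{i,W}(I_{\Gamma}) = \dim_{\mathbb{K}} \widetilde{H}_{|W|-i-2}(\Gamma_W;\mathbb{K}), \qquad W \subseteq V,
\]
where $\Gamma_W$ is the restriction of $\Gamma$ to $W$.

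For (1) and (2), we apply Hochster's formula to $\Gamma = \Delta^\vee$. We then use combinatorial Alexander duality in the form
\[
\widetilde{H}_{j}\bigl((\Delta^\vee)_W\bigr) \cong \widetilde{H}^{|W|-j-3}\bigl(\link_\Delta(V\setminus W)\bigr),
\]
where the right side is zero unless $V\setminus W \in \Delta$. Combining the two, a nonzero Betti number $\beta_{i,W}(I_{\Delta^\vee})$ corresponds to nonvanishing of $\widetilde{H}^{i-1}(\link_\Delta F)$, with $F = V\setminus W$ and $|W| = N-|F|$.

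Two consequences follow. First, $I_{\Delta^\vee}$ is generated in degree $N-\dim\Delta-1$ and has a linear resolution exactly when $\widetilde{H}^{i-1}(\link_\Delta F) = 0$ for $i-1 \ne \dim \link_\Delta F$. This is Reisner's criterion, which gives (1). Second, the regularity $\max\{|W|-i\}$ equals $\max\{N-|F|-i : \widetilde{H}^{i-1}(\link_\Delta F)\ne 0\}$. By Hochster's formula for the local cohomology of $\mathbb{K}[\Delta]$ and Auslander--Buchsbaum, this maximum equals $N - \depth \mathbb{K}[\Delta] = \pd_R \mathbb{K}[\Delta]$, which gives (2). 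The main bookkeeping obstacle is getting the index shifts in the duality isomorphism exactly right. We would check them on a simplex and on the boundary of a simplex.

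For (3), let $F_1,\dots,F_t$ be the facets and $u_k = u_{F_k}$. For monomials we have
\[
(u_1,\dots,u_{k-1}):u_k = \bigl(u_i/\gcd(u_i,u_k) : i<k\bigr) = \Bigl(\prod_{x\in F_k\setminus F_i} x : i<k\Bigr).
\]
This colon ideal is generated by variables if and only if, for every $i<k$, there exist $j<k$ and a vertex $x$ with $F_k\setminus F_j = \{x\}$ and $x \in F_k\setminus F_i$.

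Rewritten in terms of faces, the condition says that each $F_i\cap F_k$ with $i<k$ is contained in some $F_j \cap F_k$, with $j<k$, of codimension one in $F_k$. This is exactly the statement that $\langle F_k\rangle \cap \langle F_1,\dots,F_{k-1}\rangle$ is pure of dimension $\dim F_k -1$. So shelling orders of $\Delta$ and linear-quotient orders of $\mathcal G(I_{\Delta^\vee})$ coincide under $F\mapsto u_F$. The only subtlety is that a linear-quotient order might a priori not come from facets in a useful order, but the bijection above is exact, so no issue arises.
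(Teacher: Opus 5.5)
Your proof is correct. The paper gives no proof of its own, only citations, and your argument is the standard one in the cited sources: the Eagon--Reiner Betti formula $\beta_{i,V\setminus F}(I_{\Delta^\vee})=\dim_{\mathbb{K}}\widetilde{H}^{i-1}(\link_\Delta F;\mathbb{K})$ from Hochster's formula plus Alexander duality, then Reisner's criterion for (1), Hochster's local cohomology formula for $\depth\mathbb{K}[\Delta]$ for (2), and the colon computation $(u_1,\dots,u_{k-1}):u_k=(x_{F_k\setminus F_i} : i<k)$ for (3). One point to make explicit is purity in (1): $I_{\Delta^\vee}$ is generated in the single degree $N-\dim\Delta-1$ only when $\Delta$ is pure, which follows from ``Cohen--Macaulay implies pure'' in one direction and, in the other, from the fact that equal degrees $N-|F|$ of the generators $u_F$ force all facets to have the same size.
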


We recall the following characterization of the chessboard complex. As a consequence of \cite{Z94}, \cite{garst1979cohen}, and Theorem~\ref{ER-Terai}, we have the following.

\begin{theorem}\label{base-known}
Let $\Delta_{m,n}$ be the chessboard complex. Then $J_{m,n}$ has linear quotients if and only if $n \ge 2m-1$.
\end{theorem}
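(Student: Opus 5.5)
The statement of Theorem~\ref{base-known} is a direct assembly of three known facts, so the plan is simply to chain them together. By Theorem~\ref{ER-Terai}(3), applied with $\Delta=\Delta_{m,n}$, the ideal $J_{m,n}=I_{\Delta_{m,n}^\vee}$ has linear quotients if and only if $\Delta_{m,n}$ is shellable. It therefore suffices to show that $\Delta_{m,n}$ is shellable exactly when $n\ge 2m-1$. Throughout we keep the standing assumption $m\le n$.

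For the direction $n\ge 2m-1\Rightarrow$ linear quotients, invoke Ziegler~\cite{Z94}. He proves that $\Delta_{m,n}$ is vertex decomposable, and hence shellable, whenever $n\ge 2m-1$. Theorem~\ref{ER-Terai}(3) then gives that $J_{m,n}$ has linear quotients.

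For the converse, suppose $J_{m,n}$ has linear quotients, so that $\Delta_{m,n}$ is shellable. The complex $\Delta_{m,n}$ is pure of dimension $m-1$, as noted in the preliminaries. By the implication shellable $\Rightarrow$ sequentially Cohen--Macaulay, it is sequentially Cohen--Macaulay. A pure sequentially Cohen--Macaulay complex is Cohen--Macaulay, because its top pure skeleton $\Delta^{[m-1]}$ is the complex itself. Alternatively, one can argue algebraically: $J_{m,n}$ is generated in the single degree $m(n-1)$, since all facets have $m$ vertices. An equigenerated ideal with linear quotients has a linear resolution, so Theorem~\ref{ER-Terai}(1) shows that $\mathbb{K}[\Delta_{m,n}]$ is Cohen--Macaulay. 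Garst's theorem~\cite{garst1979cohen} says that $\Delta_{m,n}$ is Cohen--Macaulay if and only if $n\ge 2m-1$, which forces $n\ge 2m-1$.

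There is no real obstacle here beyond bookkeeping. The one point needing care is the step from ``shellable'' to ``Cohen--Macaulay''. For this we use purity of $\Delta_{m,n}$, or equivalently that $J_{m,n}$ is equigenerated of degree $m(n-1)$, so that Garst's characterization applies. We also check that Garst's theorem holds over the field $\mathbb{K}$ in question; this is harmless, since in the converse direction the hypothesis is shellability, which is field-independent and implies Cohen--Macaulayness over every field.
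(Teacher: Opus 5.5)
Your proposal is correct and takes the same route as the paper: the paper also obtains this statement directly by combining Ziegler's shellability (indeed vertex decomposability) result for $n\ge 2m-1$, Garst's characterization of Cohen--Macaulayness, and the correspondences in Theorem~\ref{ER-Terai}. Your remarks on purity (so that shellable implies Cohen--Macaulay) and on field-independence make explicit the bookkeeping that the paper leaves implicit.
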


\section{Linear quotients of symbolic and ordinary powers}\label{sec-lq}

In this section, we study the linear-quotient property of the symbolic and ordinary powers of $J_{m,n}$. We first give a complete characterization for symbolic powers and then establish positive and negative results for ordinary powers.

\subsection{Symbolic powers}

We begin by recalling the polarization of a monomial ideal. Let $R = \mathbb{K}[x_1, \ldots, x_n]$ be a polynomial ring over a field $\mathbb{K}$, and let $M = x_1^{a_1} \cdots x_n^{a_n}$ be a monomial in $R$. The \emph{polarization} of $M$ is the squarefree monomial
$M^{\pol} = \prod_{i=1}^n \prod_{j=1}^{a_i} x_{ij}$
in the polynomial ring
$R' = \mathbb{K}[x_{ij} \mid 1 \le i \le n,\ 1 \le j \le a_i].$
If $I = (M_1, \ldots, M_r) \subseteq R$ is a monomial ideal, its polarization is the squarefree monomial ideal
$I^{\pol} = (M_1^{\pol}, \ldots, M_r^{\pol}) \subseteq R'.$

We next recall a construction introduced in \cite{Fakhari}. The author constructed it for an arbitrary graph, but we focus here on the rook graph.

\begin{cons}[\cite{Fakhari}]
Let $R_{m,n}$ be the rook graph. For an integer $q \ge 1$, define the graph $R_{m,n}^q$ by
$V(R_{m,n}^q) = \{x_{i,j,p} \mid 1 \le i \le m,\ 1 \le j \le n,\ 1 \le p \le q\},$
and
\[
E(R_{m,n}^q) = \bigl\{ \{x_{i,j,p}, x_{i',j',p'}\} \mid \{x_{i,j}, x_{i',j'}\} \in E(R_{m,n}),\ p+p' \le q+1 \bigr\}.
\]
\end{cons}
For $q=1$, we have $R_{m,n}^1\cong R_{m,n}$.

The following lemma gives the polarization correspondence that will be used below.

\begin{lemma}\label{lq-cl}
Let $\Delta_{m,n}$ be the chessboard complex. Then, for every $q \ge 1$,
$\left(J_{m,n}^{(q)}\right)^{\pol} = J(R_{m,n}^q).$
Moreover, $J(R_{m,n}^q)$ is componentwise linear (respectively, has linear quotients) if and only if $J_{m,n}^{(q)}$ is componentwise linear (respectively, has linear quotients).
\end{lemma}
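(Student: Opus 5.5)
The plan is to reduce both claims to known results about the graph $R_{m,n}^q$ and about polarization.

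For the equality, recall that $J_{m,n}=J(R_{m,n})=\bigcap_{\{x_{i,j},x_{k,\ell}\}\in E(R_{m,n})}(x_{i,j},x_{k,\ell})$ by the Observation and Definition~\ref{def-cover-ideal}. Since this is an irredundant prime decomposition of a squarefree monomial ideal,
\[
J_{m,n}^{(q)}=\bigcap_{\{x_{i,j},x_{k,\ell}\}\in E(R_{m,n})}(x_{i,j},x_{k,\ell})^q .
\]
Polarization commutes with intersections of monomial ideals. This holds because the least common multiple of two polarized monomials is the polarization of their least common multiple, since polarization of $x^a$ uses $x_1,\dots,x_a$ as an initial segment of shadow variables. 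We also need the standard fact that the polarization of $(x,y)^q$ is $\bigcap_{p+p'\le q+1}(x_{p},y_{p'})$. To verify it, note that a monomial $x^ay^b$ with $a+b= q$ polarizes to $x_1\cdots x_a\,y_1\cdots y_b$. This product lies in $(x_p,y_{p'})$ exactly when $p\le a$ or $p'\le b$, and that always happens when $p+p'\le q+1$. Conversely, each squarefree minimal generator of the intersection, restricted to initial segments, corresponds to such an $x^ay^b$. Hence the right-hand side is generated by the polarized generators.

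Combining these facts, with the shadow variable $x_{i,j,p}$ standing for the $p$-th copy of $x_{i,j}$, gives
\[
\bigl(J_{m,n}^{(q)}\bigr)^{\pol}=\bigcap_{\{x_{i,j},x_{k,\ell}\}\in E(R_{m,n})}\ \bigcap_{p+p'\le q+1}(x_{i,j,p},x_{k,\ell,p'})=J(R_{m,n}^q).
\]
Variables $x_{i,j,p}$ with $p$ exceeding the maximal exponent appear in neither side, or appear only as isolated vertices. So I would note that the ambient rings may be taken to agree, adjoining free variables where needed. This is essentially the argument of \cite{Fakhari}, and I would cite it directly if allowed.

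For the second statement I would invoke the known facts that polarization preserves graded Betti numbers, and that $I_{\langle d\rangle}^{\pol}$ relates to $(I^{\pol})_{\langle d\rangle}$ suitably. Concretely, the componentwise linearity equivalence follows from the result of Herzog–Hibi (see \cite{HerzogsBook}): $I$ is componentwise linear iff $I^{\pol}$ is. For linear quotients I would cite the known statement that a monomial ideal has linear quotients iff its polarization does; this is Fakhari's or Jahan's result, see \cite{Fakhari}.

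The main obstacle is the linear-quotient transfer. The direction from $I^{\pol}$ to $I$ follows by specializing the order, since setting shadow variables equal maps colon ideals generated by variables to ideals generated by variables. The converse needs an ordering compatible with degree and a lexicographic tie-break. If a proof is required rather than a citation, I would start from the ordering of $I$ and check directly that the colon ideals in the polarized ordering are generated by variables.
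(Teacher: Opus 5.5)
Your proposal is correct and follows the paper's route. The paper cites \cite[Lemma~3.4]{Fakhari} for the identity, and your argument (polarization commutes with intersections, and $\bigl((x,y)^q\bigr)^{\pol}=\bigcap_{p+p'\le q+1}(x_p,y_{p'})$) reproves it; the paper takes both transfer statements from \cite[Lemma~3.5]{Fakhari} and \cite[Proposition~1]{NPY21}, so use those in place of your looser attributions. Your sense of where the difficulty lies is inverted, though: since $\gcd(u^{\pol},v^{\pol})=\gcd(u,v)^{\pol}$, for the same ordering $u_l^{\pol}/\gcd(u_l^{\pol},u_k^{\pol})$ is the single variable $x_{s,b_s+1}$ (with $b_s$ the $x_s$-exponent of $u_k$) exactly when $u_l/\gcd(u_l,u_k)=x_s$, and $x_{s,b_s+1}$ divides $u_j^{\pol}/\gcd(u_j^{\pol},u_k^{\pol})$ exactly when $x_s$ divides $u_j/\gcd(u_j,u_k)$, so linear quotients transfer in both directions with no degree or lexicographic tie-breaking, whereas componentwise linearity is the step that genuinely needs a citation, because $(I^{\pol})_{\langle d\rangle}\neq(I_{\langle d\rangle})^{\pol}$ in general and preservation of Betti numbers alone does not settle it.
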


\begin{proof}
The equality $\left(J_{m,n}^{(q)}\right)^{\pol} = J(R_{m,n}^q)$ is proved in \cite[Lemma~3.4]{Fakhari}. The remaining assertions follow from \cite[Lemma~3.5]{Fakhari} and \cite[Proposition~1]{NPY21}.
\end{proof}

For $1 \le k \le m-1$, let $G_{k,m,n}^q$ denote the induced subgraph of $R_{m,n}^q$ on the vertex set
\[
V(G_{k,m,n}^q) = V(R_{m,n}^q) \setminus \{x_{i,j,1} \mid 1 \le i \le m-k,\ 1 \le j \le n\},
\]
where $q \ge 2$ and $m \ge 2$.

\begin{lemma}\label{tech-lm}
Suppose that $R_{m,n}^t$ is vertex decomposable for every $1 \le t < q$ and every pair $(m,n)$ satisfying $n \ge 2m-1$. Then $G_{s,m,n}^q$ is vertex decomposable for every $1 \le s \le m-1$.
\end{lemma}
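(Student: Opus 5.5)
The plan is to prove the statement by induction on the number of layer-one vertices present, deleting them one at a time. At each step we use a layer-one vertex as a shedding vertex and apply the characterization of vertex decomposable graphs from \cite[Lemma~4]{Wood2} recalled above. Throughout, $n\ge 2m-1$ is assumed, which is the range in which the hypothesis is used; it is preserved by $(m,n)\mapsto(m-1,n-1)$. We also use freely that permuting rows or columns of the board induces automorphisms of $R^q_{m,n}$, and that adding isolated vertices preserves vertex decomposability.

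\textbf{The intermediate graphs.} Let $r=m-s+1$. Write $H_0=G^q_{s,m,n}$, and let $H_\ell$ be obtained from $H_0$ by deleting $x_{r,1,1},\dots,x_{r,\ell,1}$. Then $H_n=G^q_{s-1,m,n}$, where for $s-1=0$ we set $G^q_{0,m,n}$ to be $R^q_{m,n}$ with all of layer one removed. It suffices to show three things: $G^q_{0,m,n}$ is vertex decomposable; and, for each $\ell<n$, the vertex $x=x_{r,\ell+1,1}$ is a shedding vertex of $H_\ell$ whose link graph $H_\ell\setminus N[x]$ is vertex decomposable. Descending induction on $\ell$, followed by induction on $s$, then gives the result.

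\textbf{The base graph.} In $G^q_{0,m,n}$ a vertex $x_{i,j,q}$ has neighbours only on layer $1$, so the layer-$q$ vertices are isolated. On layers $2,\dots,q-1$ the edge condition $p+p'\le q+1$ becomes $\tilde p+\tilde p'\le (q-2)+1$ after the shift $\tilde p=p-1$. Hence $G^q_{0,m,n}\cong R^{q-2}_{m,n}\sqcup(\text{isolated vertices})$. This is vertex decomposable by hypothesis, since $q-2<q$; for $q=2$ the graph has no edges.

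\textbf{The link graph.} Removing $N[x]$ deletes every copy of every square in row $r$ and in column $\ell+1$, except the copies $x_{r,\ell+1,p}$ with $p\ge2$. Those copies become isolated, since all their neighbours lie in that row or column. What remains is the board with row $r$ and column $\ell+1$ removed. On it, layer-one vertices survive exactly in the rows below $r$, i.e.\ in $s-1$ rows, and all other layers are complete. Up to a row and column permutation this is $G^q_{s-1,m-1,n-1}$, or $G^q_{0,m-1,n-1}$ when $s=1$, plus isolated vertices. Note that $s-1\le (m-1)-1$. It is vertex decomposable by the induction on $m$ built into the argument, or by the base case above.

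\textbf{The shedding condition.} This is the step I expect to be the main obstacle. I would verify it via Woodroofe's sufficient criterion: if some neighbour $y$ of $x$ satisfies $N[y]\subseteq N[x]$, then $x$ is a shedding vertex. Since $s\le m-1$, the row $r'=1$ contains no layer-one vertices. Take $y=x_{1,\ell+1,q}$. It is adjacent to $x$, because the two squares share a column and $1+q\le q+1$. The neighbours of $y$ lie on layer $1$ only, and in row $1$ or column $\ell+1$. Row $1$ has no layer-one vertices, and the layer-one vertices in column $\ell+1$ are $x$ or neighbours of $x$. Hence $N[y]\subseteq N[x]$, so $x$ is a shedding vertex. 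This is exactly where the restriction $s\le m-1$ enters.
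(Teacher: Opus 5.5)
Your proof is correct and follows the paper's argument. Both proceed by induction on $s$, deleting the layer-one vertices $x_{m-s+1,j,1}$ one at a time, and both use $x_{1,j,q}$ as the shedding witness; the paper says $x_{1,j,q}$ is a simplicial neighbor, which gives the same containment $N[y]\subseteq N[x]$ you verify. The links are $G^{q}_{s-1,m-1,n-1}$ (or $R^{q-2}_{m-1,n-1}$ when $s=1$) and the final deletion is $G^{q}_{s-1,m,n}$ (or $R^{q-2}_{m,n}$), up to isolated vertices, in both treatments. Your convention $G^{q}_{0,m,n}$ just folds the paper's separate base case $s=1$ into the inductive step, and the link is covered by the induction on $s$ taken uniformly over all $(m,n)$ with $n\ge 2m-1$, not by a separate induction on $m$.
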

\begin{proof}
We proceed by induction on $s$. First, let $s=1$ and set $G = G_{1,m,n}^q$. For $i = 1, \ldots, n$, let $x_i = x_{m,i,1}$. Since $m \ge 2$ and $n \ge 2m-1$, we have $n \ge 3$. The vertex $x_1$ has the simplicial neighbor $x_{1,1,q}$ and is therefore a shedding vertex \cite[Corollary~7]{Wood2}. Moreover,
$G \setminus N_G[x_1] \cong R_{m-1,n-1}^{q-2},$
up to isolated vertices. Since $n-1 \ge 2(m-1)-1$ and $R_{m,n}^t$ is vertex decomposable for every $1 \le t < q$ and every pair $(m,n)$ satisfying $n \ge 2m-1$, it follows that $G \setminus N_G[x_1]$ is vertex decomposable. Thus it suffices to show that $G \setminus x_1$ is vertex decomposable.

One can observe that $x_2$ has a simplicial neighbor $x_{1,2,q}$ in $G \setminus x_1$, and $x_3$ has a simplicial neighbor $x_{1,3,q}$ in $G \setminus \{x_1, x_2\}$, and so on. Also, $G \setminus N_G[x_i] \cong R_{m-1,n-1}^{q-2}$ up to isolated vertices. So the same argument applies successively to $x_2, \ldots, x_n$. Therefore, it is enough to consider
$G \setminus \{x_1, \ldots, x_n\} \cong R_{m,n}^{q-2},$
which is vertex decomposable by the hypothesis of the theorem. Hence $G_{1,m,n}^q$ is vertex decomposable.

Now assume that $s \ge 2$ and that $G_{i,m,n}^q$ is vertex decomposable for every $1 \le i < s$. Set $G = G_{s,m,n}^q$ and $x_i = x_{m-s+1,i,1}$, $i = 1, \ldots, n$. The vertex $x_1$ has the simplicial neighbor $x_{1,1,q}$ and is therefore a shedding vertex. Furthermore,
$G \setminus N_G[x_1] \cong G_{s-1,m-1,n-1}^q.$
Since $n-1 \ge 2(m-1)-1$, the induction hypothesis on $s$ implies that $G \setminus N_G[x_1]$ is vertex decomposable. As above, it remains to show that $G \setminus x_1$ is vertex decomposable. Applying the same argument successively to $x_2, \ldots, x_n$, it suffices to observe that
$G \setminus \{x_1, \ldots, x_n\} \cong G_{s-1,m,n}^q,$
which is vertex decomposable by the induction hypothesis on $s$. Thus $G_{s,m,n}^q$ is vertex decomposable.
\end{proof}

We begin with a vertex-decomposability result that will be used below.

\begin{lemma}\label{tech-lm0}
Suppose $R_{m,n}^{2t}$ is vertex decomposable for $1 \le t \le q-1$ and for all $(m,n)$ satisfying $m < n$. Then $G_{s,m,n}^{2q}$ is vertex decomposable for every $1 \le s \le m-1$.
\end{lemma}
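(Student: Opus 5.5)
We follow the scheme of the proof of Lemma~\ref{tech-lm}, inducting on $s$ and shedding the layer-$1$ vertices of one row at a time; the new features are that the exponent is $2q$ and that only $m<n$ is assumed. The key computation concerns a vertex $x=x_{r,j,1}$ of layer $1$ in $R_{m,n}^{2q}$. Its neighbours are the vertices $x_{i',j',p'}$ with $(i',j')$ in row $r$ or column $j$ and $p'\le 2q$, i.e.\ every layer of the cross through $(r,j)$. Hence $G\setminus N_G[x]$ is, up to isolated vertices, the analogous graph on the $(m-1)\times(n-1)$ board, with the layer exponent unchanged.

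Moreover $x_{1,j,2q}$ (and more generally $x_{i,j,2q}$ with $i\ne r$) is adjacent only to layer-$1$ vertices in its row and column. Inside the current induced subgraph all of these lie in the closed neighbourhood of $x$, except possibly other layer-$1$ vertices of row $r$ that have not yet been deleted. Therefore we order the deletions so that at each step the chosen vertex has a simplicial neighbour of the form $x_{i,j,2q}$, exactly as in Lemma~\ref{tech-lm}, and then invoke \cite[Corollary~7]{Wood2} to conclude that it is a shedding vertex.

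\textbf{Base case $s=1$.} Let $G=G_{1,m,n}^{2q}$, so layer $1$ survives only in row $m$. Put $x_i=x_{m,i,1}$. Shedding $x_1,\dots,x_n$ in turn, each link $G\setminus N[x_i]$ is, up to isolated vertices, the graph $R_{m-1,n-1}^{2q}$ with its layer-$1$ vertices removed. Removing the layer-$1$ vertices kills every edge involving layer $2q$, so those vertices become isolated; after shifting layers down by one, what remains is $R_{m-1,n-1}^{2q-2}$. Since $m-1<n-1$, this graph is vertex decomposable by hypothesis with $t=q-1$; when $q=1$ the graph is edgeless. After all of $x_1,\dots,x_n$ are deleted, the same layer-shift argument shows that $G\setminus\{x_1,\dots,x_n\}\cong R_{m,n}^{2q-2}$ up to isolated vertices, which is again vertex decomposable by hypothesis.

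\textbf{Inductive step $s\ge 2$.} Shed the vertices $x_i=x_{m-s+1,i,1}$ in turn. Each link $G\setminus N[x_i]$ is isomorphic to $G_{s-1,m-1,n-1}^{2q}$, and since $m-1<n-1$ is preserved, this is vertex decomposable by induction on $s$. This requires the induction on $s$ to be carried out uniformly over all pairs $(m,n)$ with $m<n$. In the degenerate case $m-1=1$ (i.e.\ $s=m-1=1$) no such link arises. After deleting the whole row we are left with $G_{s-1,m,n}^{2q}$, which is vertex decomposable by the induction hypothesis.

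\textbf{Main obstacle.} The delicate point is verifying, at each intermediate deletion, that the chosen vertex really does have a simplicial neighbour. Two dangers arise: a candidate $x_{i,j,2q}$ could have a surviving neighbour $x_{r,j',1}$ outside $N[x_i]$, and when $n$ is small relative to $m$ the supply of columns might be insufficient. I would handle this by taking the simplicial neighbour of $x_{r,j,1}$ to be $x_{i,j,2q}$ for a row $i$ whose layer-$1$ vertices have already been removed, so that its only surviving layer-$1$ neighbours lie in column $j$. For $s$ near $m$ such a row might not exist, and in that case I would instead argue shedding directly via condition (2) of \cite[Lemma~4]{Wood2}. That fallback is plausible here because every maximal independent set of $G\setminus x$ meets column $j$ or row $r$; I expect this check to be where the hypothesis $m<n$ enters.
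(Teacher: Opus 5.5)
Your proposal is correct and follows the paper's approach, which is to repeat the proof of Lemma~\ref{tech-lm} with exponent $2q$. You induct on $s$ uniformly over all $m<n$, shed the layer-$1$ vertices of row $m-s+1$ via the simplicial neighbours $x_{1,j,2q}$, and obtain, up to isolated vertices, links $R_{m-1,n-1}^{2q-2}$ (resp.\ $G_{s-1,m-1,n-1}^{2q}$) and remainders $R_{m,n}^{2q-2}$ (resp.\ $G_{s-1,m,n}^{2q}$). Your ``main obstacle'' never arises: the only neighbours that could spoil simpliciality of $x_{i,j,2q}$ are surviving layer-$1$ vertices of row $i$ (not row $r$), and since $s\le m-1$, row $1$ has no layer-$1$ vertices in any of these graphs, so $N(x_{1,j,2q})$ is always the clique of surviving layer-$1$ vertices in column $j$; the fallback via \cite[Lemma~4]{Wood2} is unnecessary, and $m<n$ is used only to place the links and remainders under the hypotheses.
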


\begin{proof}
We can prove the result by induction on $s$, proceeding similarly to Lemma~\ref{tech-lm}.
\end{proof}

\begin{theorem}\label{R^{2q}_{m,n}}
Let $m < n$. Then $R_{m,n}^{2q}$ is vertex decomposable for every $q \ge 1$.
\end{theorem}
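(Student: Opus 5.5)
We prove the statement by induction on $q$, feeding Lemma~\ref{tech-lm0} at each step; the hypothesis of that lemma is exactly the induction hypothesis. The vertex $x_{i,j,1}$ is adjacent to $x_{i',j',p'}$ for every neighbor $x_{i',j'}$ of $x_{i,j}$ in $R_{m,n}$ and every $p' \le 2q$. The plan is to shed all layer-one vertices, row by row, using simplicial neighbors from the top layer $p=2q$, as in the proof of Lemma~\ref{tech-lm}.

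First we verify the shedding step for the first row. Set $G=R_{m,n}^{2q}$ and $x_j=x_{1,j,1}$. In $R_{m,n}^{2q}$, the vertex $x_{i,j,2q}$ is adjacent only to vertices $x_{i',j',1}$ with $x_{i',j'}$ adjacent to $x_{i,j}$. Hence $x_{m,1,2q}$ is a neighbor of $x_1$ whose neighborhood consists of the layer-one vertices in row $m$ and column $1$. For the neighbors of $x_{m,1,2q}$ to form a clique, we need every layer-one vertex of row $m$ to be adjacent to every layer-one vertex of column $1$. That fails, since $x_{m,2,1}$ and $x_{2,1,1}$ are not adjacent when $m\ge 2$.

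So the simplicial-neighbor trick does not apply verbatim at full size. Instead we mimic Lemma~\ref{tech-lm}: first delete the layer-one vertices of rows $1,\dots,m-1$, which reduces to $G_{1,m,n}^{2q}$, and there use the argument of that lemma's base case. In $G_{1,m,n}^{2q}$ the only layer-one vertices are in row $m$. For $x=x_{m,j,1}$, the vertex $x_{1,j,2q}$ is a simplicial neighbor: its neighbors are the layer-one vertices of row $1$ and of column $j$, and only $x_{m,j,1}$ survives, so the neighborhood is trivially a clique. By \cite[Corollary~7]{Wood2}, each such $x$ is a shedding vertex.

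The direction of the reduction must be reversed, because we need to go from $R_{m,n}^{2q}$ down to $G_{s,m,n}^{2q}$. We shed $x_{1,1,1},\dots,x_{1,n,1}$ in $R_{m,n}^{2q}$ successively. The vertex $x_{1,j,1}$ has neighbor $x_{m,j,2q}$, whose neighbors are the layer-one vertices in row $m$ and column $j$. When $n>m$, or after earlier deletions, this neighborhood still need not be a clique. So we apply Woodroofe's criterion directly: we show that no independent set of $G\setminus N[x]$ is maximal in $G\setminus x$.

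Let $S$ be a maximal independent set of $G\setminus N[x]$. The vertex $x_{m,j,2q}$ has all its neighbors outside $G\setminus N[x]$, except layer-one vertices in row $m$ off column $j$, and layer-one vertices in column $j$ off row $1$. Choose $x_{m,j,2q}$ or a suitable substitute vertex to extend $S$ in $G\setminus x$; this is where $m<n$ is used, as it provides a free column. I expect this verification to be the main obstacle, and I would try the substitute $x_{i,j',2q}$ for a column $j'$ missed by $S$.

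The links $G\setminus N[x_{i,j,1}]$ are isomorphic, up to isolated vertices, to $R_{m-1,n-1}^{2q-2}$; this holds by the induction hypothesis, since $m-1<n-1$. After all layer-one vertices of rows $1,\dots,m-1$ are removed, the remaining graph is $G_{1,m,n}^{2q}$, which is vertex decomposable by Lemma~\ref{tech-lm0}. The base case $q=1$ is handled the same way, since $G_{s,m,n}^2$ reduces to graphs on layer $2$ plus isolated vertices.
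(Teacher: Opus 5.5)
Your skeleton matches the paper's: shed $x_{1,1,1},\dots,x_{1,n,1}$ by Woodroofe's criterion, then invoke Lemma~\ref{tech-lm0}. However, the induction as you set it up does not close. As you note yourself, $x_{1,j,1}$ is adjacent to \emph{every} layer of every other cell in row $1$ and in column $j$. So deleting $N[x_{1,j,1}]$ removes that row and column entirely. The link $R^{2q}_{m,n}\setminus N[x_{1,j,1}]$, and likewise the link after the earlier deletions in row $1$, is $R^{2q}_{m-1,n-1}$ together with the isolated vertices $x_{1,j,p}$, $p\ge 2$. It is not $R^{2q-2}_{m-1,n-1}$. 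The drop to $2q-2$ occurs only for $x_{m,j,1}$ inside $G^{2q}_{1,m,n}$, i.e.\ inside the proof of the lemma. Hence an induction on $q$ alone does not cover these links; your base case $q=1$ also has links $R^{2}_{m-1,n-1}$. You can use the paper's induction on $m+q$. Alternatively, keep your outer induction on $q$, so that the hypothesis of Lemma~\ref{tech-lm0} is available verbatim, and add an inner induction on $m$, using $m-1<n-1$.

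Second, you leave open the shedding property of $x=x_{1,j,1}$ in the current graph $G$. This is the core of the proof and the only place where $m<n$ enters. Your first candidate $x_{m,j,2q}$ does not work. Its layer-one neighbours in column $j$ lie in $N[x]$, not in $G\setminus N[x]$ as you state. Its neighbours $x_{m,j',1}$, $j'\ne j$, do lie in $G\setminus N[x]$, and one of them may belong to $S$. The extending vertex must be taken in row $1$:
\begin{itemize}
\item All neighbours of $x_{1,j,2q}$ lie in $N[x]$ or were deleted, so it extends $S$ unless $x_{1,j,2q}\in S$.
\item In that case, layer-one vertices of a common row are pairwise adjacent, so $S$ contains at most one layer-one vertex in each of rows $2,\dots,m$. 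Among the $n-1>m-1$ columns $j'\ne j$, some column therefore carries no layer-one vertex of $S$.
\item For such $j'$, the vertex $x_{1,j',2q}\in N[x]$ has its neighbours in $G\setminus x$ contained in $N[x]\cup\{x_{i,j',1}: 2\le i\le m\}$. Hence $S\cup\{x_{1,j',2q}\}$ is independent in $G\setminus x$.
\end{itemize}
Your tentative choice $x_{i,j',2q}$ with $i\ge 2$ can fail, because that vertex lies in $G\setminus N[x]$ and may be blocked by the layer-one vertex of $S$ in row $i$. Finally, you do not need, and do not justify, removing the layer-one vertices of rows $2,\dots,m-1$. Once row $1$ is gone the graph is already $G^{2q}_{m-1,m,n}$, and Lemma~\ref{tech-lm0} with $s=m-1$ finishes the argument.
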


\begin{proof}
We argue by induction on $m+q$. The case $m=1$ and $q=1$ is immediate. Assume that $R_{a,b}^{2r}$ is vertex decomposable whenever $a+r < m+q$ and $a < b$. Set $G = R_{m,n}^{2q}$ and $x_1 = x_{1,1,1}$. Then
$G \setminus N_G[x_1] \cong R_{m-1,n-1}^{2q},$
which is vertex decomposable by the induction hypothesis, since $(m-1)+q < m+q$ and $m-1 < n-1$.

We claim that $x_1$ is a shedding vertex. Let $I$ be an independent set of $G \setminus N_G[x_1]$. If $x_{1,1,2q} \notin I$, then $I \cup \{x_{1,1,2q}\}$ is independent in $G \setminus \{x_1\}$. Suppose that $x_{1,1,2q} \in I$. If $I \cup \{x_{1,j,2q}\}$ is not independent for every $2 \le j \le n$, then, for each $j \in \{2, \ldots, n\}$, there exists $b_j \in \{2, \ldots, m\}$ such that $x_{b_j,j,1} \in I$. Since $n-1 > m-1$, the pigeonhole principle gives distinct $i,j \in \{2, \ldots, n\}$ with $b_i = b_j$. But then $x_{b_i,i,1}$ and $x_{b_j,j,1}$ are adjacent in $G$, contradicting the independence of $I$. Hence, for some $j \in \{2, \ldots, n\}$, $I \cup \{x_{1,j,2q}\}$ is independent in $G \setminus \{x_1\}$. Thus $x_1$ is a shedding vertex.

Applying the same argument successively to $x_{1,2,1}, \ldots, x_{1,n,1}$, it remains to show that $G \setminus \{x_{1,1,1}, \ldots, x_{1,n,1}\}$ is vertex decomposable. By construction,
$G \setminus \{x_{1,1,1}, \ldots, x_{1,n,1}\} \cong G_{m-1,m,n}^{2q},$
and hence the assertion follows from Lemma~\ref{tech-lm0}. Therefore, $R_{m,n}^{2q}$ is vertex decomposable.
\end{proof}

The following proposition treats the diagonal case $m=n$ and provides an obstruction to linear quotients.

\begin{proposition}\label{R^q_{m,m}}
For every $m \ge 2$ and $q \ge 1$, the ideal $J_{m,m}^{(q)}$ is not componentwise linear. In particular, $J_{m,m}^{(q)}$ does not have linear quotients.
\end{proposition}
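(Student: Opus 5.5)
By Lemma~\ref{lq-cl}, it suffices to show that $J(R_{m,m}^q)$ is not componentwise linear. By the Herzog--Hibi / Eagon--Reiner correspondence for componentwise linear ideals (the sequentially Cohen--Macaulay analogue of Theorem~\ref{ER-Terai}(1)), $J(G)=I_{\operatorname{Ind}(G)^\vee}$ is componentwise linear if and only if $G$ is sequentially Cohen--Macaulay. So the goal is to show that $R_{m,m}^q$ is not sequentially Cohen--Macaulay for all $m\ge 2$, $q\ge 1$. The second assertion then follows, since linear quotients imply componentwise linearity.

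The main tool is Theorem~\ref{ind-lm}: if $G$ were sequentially Cohen--Macaulay, then $G\setminus N_G[A]$ would be too, for every independent set $A$. The plan is to find an independent set whose closed neighbourhood leaves a small graph that is visibly not sequentially Cohen--Macaulay, such as $C_4$ or, more generally, a pure graph whose independence complex is disconnected of positive dimension.

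\emph{Step 1 (reduce $m$).} For the vertex $x_{1,1,1}$, its neighbours in $R_{m,m}^q$ are all $x_{1,j,p}$ with $j\ne 1$ and all $x_{i,1,p}$ with $i\ne 1$, for every $p\le q$. Thus $G\setminus N_G[x_{1,1,1}]$ is isomorphic to $R_{m-1,m-1}^q$ together with the vertices $x_{1,1,p}$, $p\ge 2$. Their only neighbours lie in row $1$ or column $1$, so they become isolated. Isolated vertices produce cones and do not affect sequential Cohen--Macaulayness. By induction this reduces the problem to $m=2$: it suffices to show that $R_{2,2}^q$ is not sequentially Cohen--Macaulay.

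\emph{Step 2 (reduce $q$, the main obstacle).} For $q=1$, $R_{2,2}^1=R_{2,2}\cong C_4$. Its independence complex consists of two disjoint edges: it is pure, one-dimensional and disconnected, hence not Cohen--Macaulay, so it is not sequentially Cohen--Macaulay. For general $q$, I would look for an independent set $A$ built from top-level vertices $x_{i,j,p}$ with $p$ large. Such vertices have few neighbours, since $x_{i,j,p}$ is adjacent only to levels $p'\le q+1-p$.
\begin{itemize}
\item For $q$ odd, the aim is a choice of $A$ such that what survives, up to isolated vertices, is the middle level $k=(q+1)/2$. The edges within that level form a copy of $C_4$.
\item For $q$ even, the two middle levels $q/2$ and $q/2+1$ should survive. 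In that case I would check directly that the independence complex of this $8$-vertex graph is pure and disconnected, or has a disconnected pure skeleton.
\end{itemize}
Choosing $A$ so that it kills exactly the unwanted levels while remaining independent is the delicate combinatorial point.

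\emph{Fallback for Step 2.} If no such $A$ exists, I would instead argue via the polarization directly. Componentwise linearity of $J_{2,2}^{(q)}$ would force its lowest-degree component, generated in degree $\alpha$, to have a linear resolution. I would then exhibit a nonlinear syzygy between the generators $(x_{11}x_{22})^{a}(x_{12}x_{21})^{b}$-type monomials of that degree. This uses the explicit description of $J_{2,2}=(x_{11}x_{22},\,x_{12}x_{21})$, whose symbolic powers are easy to write down.
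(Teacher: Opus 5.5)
Your proposal yields a complete proof once the fallback is used. It follows a different route from the paper's, and one of the checks you propose would fail.

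\textbf{Comparison with the paper.} The paper runs the induction on $m$ only for $q=1$ and $q=2$, with base cases $C_4$ and the bipartite graph $R^2_{2,2}$ (the latter via \cite{VanVilla}). It then quotes \cite[Theorem~3.2]{SS23} to pass from $q\in\{1,2\}$ to all $q$. You instead note that $R^q_{m,m}\setminus N[x_{1,1,1}]\cong R^q_{m-1,m-1}$ plus isolated vertices for every $q$. So everything reduces to the single family $R^q_{2,2}$, and your fallback settles that family directly. This route is self-contained and avoids \cite{SS23} and \cite{VanVilla}. The paper's route reuses machinery it needs anyway for the parity phenomenon in Theorem~\ref{main-classification}.

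\textbf{Making the fallback airtight.} A monomial $x_{11}^a x_{12}^b x_{21}^c x_{22}^d$ of degree $2q$ lies in $J_{2,2}^{(q)}$ iff $a+b,\ c+d,\ a+c,\ b+d\ge q$, which forces $a=d$ and $b=c$. Hence the degree-$2q$ component is $(x_{11}x_{22},x_{12}x_{21})^q$. Its $q+1\ge 2$ minimal generators have pairwise lcm's of degree $\ge 2q+2$, so by the Taylor complex there are first syzygies but none in degree $2q+1$. State it this way rather than ``exhibit a nonlinear syzygy'': a single nonlinear syzygy does not by itself rule out a linear resolution. Via Lemma~\ref{lq-cl} and Herzog--Hibi, $R^q_{2,2}$ is then not sequentially Cohen--Macaulay, and Step~1 propagates this to all $m$.

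\textbf{Step~2, odd $q$.} For $q=2k-1\ge 3$, take $A$ to be all of level $k+1$. It is independent, since levels sum to $2k+2>q+1$. $N[A]$ consists of level $k+1$ and levels $1,\dots,k-1$. What remains is the $C_4$ on level $k$ plus isolated vertices.

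\textbf{Step~2, even $q$: your proposed test fails.} For $q=2k$, deleting $N[\text{level } k+2]$ leaves a copy of $R^2_{2,2}$. But $\operatorname{Ind}(R^2_{2,2})$ has exactly three facets:
\[
\{x_{1,1,1},x_{2,2,1},x_{1,1,2},x_{2,2,2}\},\quad \{x_{1,2,1},x_{2,1,1},x_{1,2,2},x_{2,1,2}\},\quad \text{and the top level}.
\]
It is pure, connected (even contractible), and all its positive-dimensional pure skeleta are connected. So your check would find nothing. The obstruction sits in the link of $\{x_{1,1,2},x_{2,2,2}\}$, which is two disjoint edges.

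\textbf{The fix.} Enlarge $A$ to $A=(\text{level }k+2)\cup\{x_{1,1,k+1},x_{2,2,k+1}\}$, using only the last two vertices when $q=2$. This $A$ is independent. It leaves the $4$-cycle $x_{1,1,k},\,x_{1,2,k+1},\,x_{2,2,k},\,x_{2,1,k+1}$ together with isolated vertices.
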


\begin{proof}
By Lemma~\ref{lq-cl}, it suffices to show that $J(R_{m,m}^q)$ is not componentwise linear for every $q \ge 1$. We first consider the cases $q=1$ and $q=2$.

For $q=1$, we show by induction on $m$ that $R_{m,m}^1$ is not sequentially Cohen-Macaulay. For $m=2$, the graph $R_{2,2}^1$ is a cycle of length $4$. By \cite[Theorem~10]{Wood2}, $R_{2,2}^1$ is not sequentially Cohen-Macaulay. Hence $J(R_{2,2}^1)$ is not componentwise linear. Now let $m>2$ and suppose, to the contrary, that $R_{m,m}^1$ is sequentially Cohen-Macaulay. By Theorem~\ref{ind-lm}, $R_{m,m}^1 \setminus N_{R_{m,m}^1}[x_{1,1,1}]$ is sequentially Cohen-Macaulay. Since
$R_{m,m}^1 \setminus N_{R_{m,m}^1}[x_{1,1,1}] \cong R_{m-1,m-1}^1,$
this contradicts the induction hypothesis. Thus $R_{m,m}^1$ is not sequentially Cohen-Macaulay, and consequently $J(R_{m,m}^1)$ is not componentwise linear.

The case $q=2$ is similar. We again proceed by induction on $m$. For $m=2$, the graph $R_{2,2}^2$ is bipartite. By \cite[Corollary~2.10 and Theorem~3.10]{VanVilla}, $R_{2,2}^2$ is not sequentially Cohen-Macaulay. Consequently, $J(R_{2,2}^2)$ is not componentwise linear. Suppose that $m>2$ and, to the contrary, that $R_{m,m}^2$ is sequentially Cohen-Macaulay. By Theorem~\ref{ind-lm}, $R_{m,m}^2 \setminus N_{R_{m,m}^2}[x_{1,1,1}]$ is sequentially Cohen-Macaulay. On the other hand, by \cite[Lemma~3.3]{selva1},
$R_{m,m}^2 \setminus N_{R_{m,m}^2}[x_{1,1,1}] \cong R_{m-1,m-1}^2 \cup \{\text{isolated vertices}\}.$
This contradicts the induction hypothesis. Hence $R_{m,m}^2$ is not sequentially Cohen-Macaulay, and therefore $J(R_{m,m}^2)$ is not componentwise linear.

We have thus established that $J(R_{m,m}^q)$ is not componentwise linear for $q=1,2$. By Lemma~\ref{lq-cl}, it follows that $J_{m,m}^{(q)}$ is not componentwise linear for $q=1,2$. Finally, \cite[Theorem~3.2]{SS23} implies that $J_{m,m}^{(q)}$ is not componentwise linear for every $q \ge 1$. Since every ideal with linear quotients is componentwise linear, $J_{m,m}^{(q)}$ does not have linear quotients.
\end{proof}

The following theorem gives a complete characterization of the linear-quotient property for the symbolic powers of the ideals $J_{m,n}$.

\begin{theorem}\label{main-classification}
Let $\Delta_{m,n}$ be the chessboard complex. Then, for every $q \ge 1$, the ideal $J_{m,n}^{(q)}$ has linear quotients if and only if one of the following conditions holds:
\begin{enumerate}
\item $n \ge 2m-1$;
\item $m < n < 2m-1$ and $q$ is even.
\end{enumerate}
In particular, if $m=n$, then $J_{m,m}^{(q)}$ does not have linear quotients for any $q \ge 1$.
\end{theorem}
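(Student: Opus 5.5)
The plan is to pass everything through polarization. By Lemma~\ref{lq-cl}, $J_{m,n}^{(q)}$ has linear quotients if and only if $J(R_{m,n}^q)$ does. By Theorem~\ref{ER-Terai}(3), applied to $\Delta=\operatorname{Ind}(R_{m,n}^q)$, this happens if and only if $\operatorname{Ind}(R_{m,n}^q)$ is shellable. So the task becomes graph-theoretic: sufficiency will be shown by proving vertex decomposability, and necessity by exhibiting a failure of sequential Cohen--Macaulayness.

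\textbf{Sufficiency.} Suppose first that $n\ge 2m-1$. We show that $R_{m,n}^q$ is vertex decomposable for all $q$ and all such pairs $(m,n)$, by strong induction on $q$.
\begin{enumerate}
\item For $q=1$, $R_{m,n}^1\cong R_{m,n}$, and $\Delta_{m,n}$ is vertex decomposable in this range by Ziegler \cite{Z94}.
\item For $q\ge 2$, take $x_{1,1,1}$. Its neighbour $x_{1,1,q}$ is simplicial, since the neighbours of $x_{\cdot,\cdot,q}$ all have layer index $1$ and pairwise lie in one row or column with $x_{1,1,\cdot}$. So $x_{1,1,1}$ is shedding, and $G\setminus N_G[x_{1,1,1}]$ is $R_{m-1,n-1}^{q-2}$ up to isolated vertices, which is handled by induction.
\item Deleting successively the layer-one vertices of the first row, exactly as in the proof of Lemma~\ref{tech-lm}, reduces the problem to $G^q_{m-1,m,n}$. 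This graph is vertex decomposable by Lemma~\ref{tech-lm}, whose hypothesis is precisely the induction hypothesis for $t<q$.
\end{enumerate}
If instead $m<n<2m-1$ and $q$ is even, Theorem~\ref{R^{2q}_{m,n}} gives vertex decomposability directly. In both cases the chain vertex decomposable $\Rightarrow$ shellable, together with Theorem~\ref{ER-Terai}(3) and Lemma~\ref{lq-cl}, gives linear quotients.

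\textbf{Necessity.} If $m=n$, Proposition~\ref{R^q_{m,m}} already shows that $J_{m,m}^{(q)}$ is not componentwise linear, so it does not have linear quotients.

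The remaining case, $m<n<2m-1$ with $q$ odd, is the main obstacle. For $q=1$ it is Theorem~\ref{base-known}. For odd $q\ge 3$ the plan is to show that $R_{m,n}^q$ is not sequentially Cohen--Macaulay, by taking links of independent sets as in Theorem~\ref{ind-lm} to reduce to $q=1$.
\begin{enumerate}
\item Let $A=\{x_{1,1,1}\}$, or more generally a layer-one vertex. Then $R_{m,n}^q\setminus N[A]$ should contain a copy of $R_{m-1,n-1}^{q-2}$ plus isolated vertices. That reduces the row/column sizes instead of preserving them, so it is not directly what we want.
\item What I would try first is to choose $A$ to consist of the vertices $x_{i,j,p}$ with $p\ge 2$ lying over a suitable independent configuration, so that the remaining graph is $R_{m,n}^1$ on layer-one vertices. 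The issue is whether high-layer vertices kill all the low-layer vertices in their row and column.
\item An alternative is to pick $A$ from the top layers so that removing $N[A]$ leaves the pattern with index shift $q\mapsto q-2$ on the same board. This would give an induction $q\to q-2$ down to $q=1$, where Garst's theorem (non-Cohen--Macaulayness for $n<2m-1$, and purity) yields non-sequential-Cohen--Macaulayness.
\end{enumerate}
Failing a clean link argument, I would invoke the symbolic-power reduction used in Proposition~\ref{R^q_{m,m}} (\cite[Theorem~3.2]{SS23}), adapted to odd powers, to deduce failure of componentwise linearity for all odd $q$ from the case $q=1$.

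Having obtained non-sequential-Cohen--Macaulayness, Lemma~\ref{lq-cl} shows that $J_{m,n}^{(q)}$ is not componentwise linear, hence it does not have linear quotients. This would complete the classification.
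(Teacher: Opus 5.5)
\textbf{There is a genuine gap in step 2 of your sufficiency argument, and the odd-power necessity case is left unfinished.}

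First, $x_{1,1,q}$ is not a neighbour of $x_{1,1,1}$. Edges of $R_{m,n}^q$ are lifted from edges of $R_{m,n}$, so two copies of the same square are never adjacent. Second, a top-layer vertex is not simplicial in $R_{m,n}^q$: $x_{1,2,1}$ and $x_{2,1,1}$ both lie in $N(x_{1,1,q})$ but are not adjacent. You have transplanted the simplicial-neighbour trick from Lemma~\ref{tech-lm}. It works there only because the layer-one vertices of rows $1,\dots,m-s$ have already been deleted, so that $N(x_{1,1,q})$ lies in a single column.

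For the same reason your link is misidentified. $G\setminus N_G[x_{1,1,1}]$ is $R_{m-1,n-1}^{q}$, with the same $q$, together with the isolated vertices $x_{1,1,p}$, $p\ge 2$. A strong induction on $q$ alone therefore does not reach it; the paper inducts on $m+q$.

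The shedding property of $x_{1,1,1}$, and of $x_{1,2,1},\dots,x_{1,n,1}$ in the successive deletions, must instead be proved by the pigeonhole argument of Theorem~\ref{R^{2q}_{m,n}}. Suppose an independent set $I$ of the link contains $x_{1,1,q}$ and no $x_{1,j,q}$ with $j\ge 2$ can be added to it. Then each column $j\ge 2$ contains a layer-one vertex of $I$ in rows $2,\dots,m$, and $n-1>m-1$ forces two of them into one row, contradicting independence. This is exactly where $n>m$ enters, and it cannot be bypassed. For $m=n\ge 2$, the set $\{x_{1,1,p}\mid p\ge 2\}\cup\{x_{i,i,p}\mid 2\le i\le m,\ 1\le p\le q\}$ is independent in the link and maximal independent in $G\setminus x_{1,1,1}$, so $x_{1,1,1}$ is not shedding. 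Your step 2 never uses $n>m$, which signals that it cannot be correct as written.

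For necessity when $m<n<2m-1$ and $q\ge 3$ is odd, you list options rather than give an argument. Your item 3 does work and only needs to be carried out. Take $A=\{x_{i,j,q}\mid 1\le i\le m,\ 1\le j\le n\}$, the whole top layer.
\begin{enumerate}
\item $A$ is independent because $2q>q+1$.
\item Top-layer vertices are adjacent only to layer-one vertices, and every layer-one vertex has a top-layer neighbour. Hence $N[A]$ is exactly the union of layers $1$ and $q$.
\item $G\setminus N[A]$ is the graph on layers $2,\dots,q-1$ with adjacency condition $p+p'\le q+1$. After the shift $p\mapsto p-1$ this is $R_{m,n}^{q-2}$.
\end{enumerate}
Theorem~\ref{ind-lm} then passes sequential Cohen--Macaulayness from $q$ down to $q-2$, and finally to $q=1$, where purity and Garst's theorem exclude it.

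The paper does not argue this by hand. It observes that $J_{m,n}$, being equigenerated without a linear resolution, is not componentwise linear, and then quotes \cite[Lemma~3.1]{SS23} for all odd $q$. That is Lemma~3.1, not the Theorem~3.2 used for $m=n$. Your link argument, once made explicit, is a self-contained proof of the special case of that lemma needed here.
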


\begin{proof}
Suppose first that $n \ge 2m-1$. We claim that $R_{m,n}^q$ is vertex decomposable for every $q \ge 1$. We proceed by induction on $m+q$.

The case $m=1$ and $q=1$ is immediate, since $R_{1,n}^1 \cong K_n$ is vertex decomposable. Suppose that $R_{a,b}^t$ is vertex decomposable whenever $a+t < m+q$ and $b \ge 2a-1$. Set $G = R_{m,n}^q$ and $x_1 = x_{1,1,1}$. Then
$G \setminus N_G[x_1] \cong R_{m-1,n-1}^q \cup \{\text{isolated vertices}\}.$
Since $n-1 \ge 2(m-1)-1$ and $(m-1)+q < m+q$, the induction hypothesis shows that $G \setminus N_G[x_1]$ is vertex decomposable.

Let $I$ be an independent set of $G \setminus N_G[x_1]$. By the argument in the proof of Theorem~\ref{R^{2q}_{m,n}}, there exists $i \in \{2, \ldots, n\}$ such that $I \cup \{x_{1,i,q}\}$ is independent in $G \setminus \{x_1\}$. Hence, $x_1$ is a shedding vertex. Applying the same argument successively to $x_{1,2,1}, \ldots, x_{1,n,1}$, it remains to show that
$G \setminus \{x_{1,1,1}, \ldots, x_{1,n,1}\}$
is vertex decomposable. By construction,
$G \setminus \{x_{1,1,1}, \ldots, x_{1,n,1}\} \cong G_{m-1,m,n}^q.$
Thus, Lemma~\ref{tech-lm} implies that this graph is vertex decomposable. Consequently, $R_{m,n}^q$ is vertex decomposable, and Lemma~\ref{lq-cl} yields that $J_{m,n}^{(q)}$ has linear quotients.

Next, suppose that $m < n < 2m-1$. By Theorem~\ref{R^{2q}_{m,n}}, $R_{m,n}^{2q}$ is vertex decomposable for every $q \ge 1$. Hence, Lemma~\ref{lq-cl} implies that $J_{m,n}^{(2q)}$ has linear quotients for every $q \ge 1$.

Conversely, suppose that $J_{m,n}^{(q)}$ has linear quotients for some fixed $q \ge 1$. By Proposition~\ref{R^q_{m,m}}, we have $m \ne n$; under the standing assumption $m \le n$, this gives $m < n$. If $q=1$, then Theorem~\ref{base-known} yields $n \ge 2m-1$. Thus, assume that $q \ge 2$. If $n \ge 2m-1$, there is nothing to prove. If $n < 2m-1$, then $J_{m,n}$ does not have a linear resolution and hence is not componentwise linear. Hence, by \cite[Lemma~3.1]{SS23}, $J_{m,n}^{(q)}$ is not componentwise linear for every odd $q \ge 1$. Since an ideal with linear quotients is componentwise linear, our assumption that $J_{m,n}^{(q)}$ has linear quotients implies that $q$ is even. Thus, $m < n < 2m-1$ and $q$ is even. This completes the proof.
\end{proof}

\subsection{Ordinary powers}

We now turn to the ordinary powers of the ideals $J_{m,n}$. We first consider the case $m=2$. We shall use the notion of \emph{weakly polymatroidal ideals}, introduced by Kokubo and Hibi \cite{KK06}.
Let $I \subseteq R = \mathbb{K}[x_1, \ldots, x_N]$ be a monomial ideal. We say that $I$ is \emph{weakly polymatroidal} if, for any $f,g \in \mathcal{G}(I)$, where
$f = x_1^{a_1} \cdots x_N^{a_N}$ and $g = x_1^{b_1} \cdots x_N^{b_N},$
if, for some $t$, one has $a_i = b_i$ for $1 \le i < t$ and $a_t > b_t$, then there exists $\ell > t$ such that
$x_t \frac{g}{x_\ell} \in I.$ For a monomial  $m$ in the polynomial ring  $R$, let  $\operatorname{supp}(m)$ denote the set of variables dividing $m$.

\begin{theorem}\label{2-ord}
For every $n \ge 3$ and $q \ge 1$, the ideal $J_{2,n}^q$ has linear quotients.
\end{theorem}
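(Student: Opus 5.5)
The plan is to describe the minimal generators of $J_{2,n}^q$ explicitly, show that $J_{2,n}^q$ is weakly polymatroidal for a suitable ordering of the variables, and then invoke the theorem of Kokubo and Hibi \cite{KK06}. That theorem says an equigenerated weakly polymatroidal ideal has linear quotients, where the linear-quotient order is the decreasing lexicographic order of the generators.

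\emph{Generators.} First I would describe $\mathcal G(J_{2,n})$. The graph $R_{2,n}$ consists of two cliques $K_n$ (the rows) joined by the matching of column edges $x_{1,j}x_{2,j}$. Since $n\ge 2$, every maximal independent set has the form $\{x_{1,a},x_{2,b}\}$ with $a\ne b$. Hence, writing $u=\prod_{i,j}x_{i,j}$, we get
\[
\mathcal G(J_{2,n})=\{\,u/(x_{1,a}x_{2,b}) \mid a\neq b\,\},
\]
all of degree $2n-2$. Consequently every generator of $J_{2,n}^q$ has the form $u^q/(AB)$. Here $A=\prod_j x_{1,j}^{a_j}$ and $B=\prod_j x_{2,j}^{b_j}$ have degree $q$ each, and the $q$ pairs can be matched with distinct columns. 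By Hall's theorem, applied to the multiset pairing, such a matching exists if and only if $a_j+b_j\le q$ for every $j$. Therefore $J_{2,n}^q$ is equigenerated in degree $q(2n-2)$. Its generators correspond bijectively to \emph{deficit vectors} $(a,b)\in\mathbb Z_{\ge0}^{2n}$ with $\sum_j a_j=\sum_j b_j=q$ and $a_j+b_j\le q$ for all $j$. Moreover, a monomial of degree $q(2n-2)$ lies in $J_{2,n}^q$ exactly when it is of this form.

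\emph{Weak polymatroidality.} I would order the variables column by column: $x_{1,1}>x_{2,1}>x_{1,2}>x_{2,2}>\cdots$. Take generators $f,g$ with deficit vectors $(a,b)$ and $(a',b')$. Suppose they agree on all variables before $x_t$ and $f$ has the larger exponent at $x_t$, so $g$ has the larger deficit at $x_t$. We must find a later variable $x_\ell$ such that $x_tg/x_\ell$ is a generator. In deficit terms, this means lowering $g$'s deficit at $x_t$ by one and raising it by one at $x_\ell$, while keeping both row sums equal to $q$ and all column sums at most $q$. Row balance forces $x_\ell$ to lie in the same row as $x_t$. Say $x_t=x_{1,c}$. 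Because the row-$1$ deficits of $f$ and $g$ agree before column $c$ and have equal totals, there is some $j>c$ with $a'_j<a_j$. The issue is whether $a'_j+1+b'_j\le q$.

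\emph{Main obstacle.} This column-sum check is where I expect the difficulty. We know $a'_j+1\le a_j\le q-b_j$, but $b'_j$ may exceed $b_j$. To handle it, I would run a counting argument over all columns $j>c$. Among those columns, the row-$1$ surplus of $f$ over $g$ is at least $1$, while the row-$2$ totals of $f$ and $g$ agree overall and agree on columns $\le c$. The interleaved ordering is chosen precisely so that these prefix identities hold. If every candidate $j$ were saturated for $g$, that is $a'_j+b'_j=q$, then summing the constraints over the columns $j>c$ should contradict $\sum_j (a'_j+b'_j)=2q$ together with the prefix agreement. The case $x_t=x_{2,c}$ is symmetric and is in fact easier, since then the row-$1$ deficits also agree in column $c$. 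The hypothesis $n\ge 3$ should enter here, because it leaves enough columns to absorb the moved deficit; for $n=2$ the statement is degenerate.

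\emph{Fallback.} If the weakly polymatroidal exchange fails for some configuration, I would instead verify linear quotients directly. I would order the generators lexicographically by deficit vector and show that each colon ideal $(f_1,\dots,f_{k-1}):f_k$ is generated by variables. Concretely, for each earlier $f_i$ I would produce an earlier $f_{i'}$ with $f_{i'}:f_k$ a single variable dividing $f_i:f_k$, obtained by a one-step deficit exchange of the same kind as above.
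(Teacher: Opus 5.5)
\textbf{There is a genuine gap: your column-interleaved variable order breaks the exchange property, already for $q=1$.}

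Your route is the paper's: show that $J_{2,n}^q$ is weakly polymatroidal and then apply the Kokubo--Hibi theorem. Your description of $\mathcal G(J_{2,n}^q)$ by deficit vectors is correct: both row sums equal $q$ and every column sum satisfies $a_j+b_j\le q$. It is cleaner than the paper's bookkeeping with factorizations. The problem is the order $x_{1,1}>x_{2,1}>x_{1,2}>x_{2,2}>\cdots>x_{1,n}>x_{2,n}$.

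\emph{Counterexample.} Let $u=\prod_{i,j}x_{i,j}$, and take $f=u/(x_{1,n}x_{2,n-1})$ and $g=u/(x_{1,n-1}x_{2,n})$.
\begin{itemize}
\item $f$ and $g$ agree on columns $1,\dots,n-2$.
\item They first differ at $x_{1,n-1}$, where $f$ has exponent $1$ and $g$ has exponent $0$.
\item The only later variables are $x_{2,n-1},x_{1,n},x_{2,n}$.
\item $x_{1,n-1}g/x_{2,n-1}=u/(x_{2,n-1}x_{2,n})$ misses a row edge.
\item $x_{1,n-1}g/x_{1,n}=u/(x_{1,n}x_{2,n})$ misses the edge of column $n$.
\item $x_{2,n}\nmid g$.
\end{itemize}

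\emph{Where your counting argument breaks.} Your saturation count gives a contradiction only when at least two columns follow $c$. If $n-c\ge 3$, the later columns would carry more than $2q$. If $n-c=2$, saturation forces $a'_c=0$. For $c=n-1$ the single later column can be saturated in $g$. The hypothesis $n\ge 3$ does not help here, because what matters is the number of columns after $c$, not $n$. Your row-$2$ case and your row-$1$ case with $c\le n-2$ are fine.

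\emph{The fallback does not repair this.} Take the decreasing lexicographic order of exponents, which is the increasing lexicographic order of deficit vectors. Then $f$ precedes $g$ and $(f):g=(x_{1,n-1}x_{2,n})$. However, every generator of the form $x_{1,n-1}g/x_\ell$ or $x_{2,n}g/x_\ell$ comes after $g$. So the colon ideal at $g$ is not generated by variables.

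\textbf{The fix is a different order.} You need an order in which, whenever the first difference lies in some row, enough columns of that row still come later. The paper lists the variables, first to last, as $x_{1,1},x_{2,2},\dots,x_{2,n},x_{2,1},x_{1,2},\dots,x_{1,n}$. In your deficit language the check is short:
\begin{itemize}
\item First difference at $x_{1,1}$: the $n-1\ge 2$ later row-$1$ columns cannot all be saturated in $g$, since $a'_1\ge 1$. This is where $n\ge 3$ is used.
\item First difference at $x_{2,c}$ with $2\le c<n$: the later row-$2$ columns $c+1,\dots,n,1$ number at least two and $b'_c\ge 1$, so one of them is unsaturated.
\item First difference at $x_{2,n}$: here $a_1=a'_1$ and $b_1>b'_1$. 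So if column $1$ were saturated in $g$, column $1$ would be violated for $f$.
\item First difference at $x_{2,1}$: impossible, by the row-$2$ sum.
\item First difference at $x_{1,c}$ with $c\ge 2$: then $b=b'$. So any $j>c$ with $a_j>a'_j$ gives $a'_j+b'_j\le a_j+b_j-1\le q-1$.
\end{itemize}
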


\begin{proof}
We show that $J_{2,n}^q$ is weakly polymatroidal. Order the variables as follows:
\[
x_{1,1} < x_{2,2} < x_{2,3} < \cdots < x_{2,n} < x_{2,1} < x_{1,2} < x_{1,3} < \cdots < x_{1,n}.
\]
Let $u = x_{1,1}^{a_{1,1}} \cdots x_{2,n}^{a_{2,n}}$ and $v = x_{1,1}^{b_{1,1}} \cdots x_{2,n}^{b_{2,n}}$ be two minimal generators of $J_{2,n}^q$. Suppose first that
$a_{1,1} = b_{1,1}, \ldots, a_{1,t-1} = b_{1,t-1}, ~ a_{1,t} > b_{1,t}.$

Since $\deg u = \deg v$, there exists a variable $x > x_{1,t}$ dividing $v$. Moreover, since $a_{1,t} > b_{1,t}$, some minimal generator $n_1 \mid v$ of $J_{2,n}$ is not divisible by $x_{1,t}$. Notice that $x_{1,t} \ne x_{1,n}$, since otherwise $\deg u = \deg v$ would imply $a_{1,n} = b_{1,n}$, contradicting $a_{1,t} > b_{1,t}$.
Next, suppose that
$a_{1,1} = b_{1,1}, a_{2,2} = b_{2,2}, \ldots, a_{2,t-1} = b_{2,t-1}, ~ a_{2,t} > b_{2,t}.$
We first show that $x_{2,t} \ne x_{2,1}$. Suppose, to the contrary, that $x_{2,t} = x_{2,1}$. Since $a_{2,1} > b_{2,1}$, there exists a minimal generator $n_1 \mid v$ of $J_{2,n}$ such that $x_{2,1} \nmid n_1$. Write
$u = m_1 \cdots m_q,$ $v = n_1 \cdots n_q,$
where $m_i, n_i \in \mathcal{G}(J_{2,n})$ for all $i$. Suppose that $x_{2,1} \mid m_1, \ldots, m_i$ and $x_{2,1} \mid n_1, \ldots, n_j$. Since $a_{2,1} > b_{2,1}$, we have $i > j$. Let
$E_1 = \{x_{2,2}, \ldots, x_{2,n}\}.$ Then
$\sum_{x \in E_1} \nu_x(v) = j(n-2) + (q-j)(n-1),$ 
whereas
$\sum_{x \in E_1} \nu_x(u) = i(n-2) + (q-i)(n-1),$ where $\nu_x(m)$ is the exponent of $x$ in $m.$
Since $i > j$,
$ \sum_{x \in E_1} \nu_x(v)>\sum_{x \in E_1} \nu_x(u),$
contradicting $a_{2,2} = b_{2,2} = \cdots = a_{2,n}$. Hence $x_{2,t} \ne x_{2,1}$.

Now suppose that
$a_{1,1} = b_{1,1}, \ldots, a_{1,n-2} = b_{1,n-2}, ~ a_{1,n-1} > b_{1,n-1}.$
Since $\deg u = \deg v$, we have $a_{1,n} < b_{1,n}$. Suppose that $x_{1,n} \mid n_1, \ldots, n_j$ and $x_{1,n} \mid m_1, \ldots, m_i$. Then $i < j$ and $x_{2,n}\mid n_{j+1},\ldots,n_q$
and $x_{2,n}\mid m_{i+1},\ldots,m_q.$ Since $a_{2,n} = b_{2,n}$, there exists a minimal generator $n' \mid v$ of $J_{2,n}$ such that $x_{1,n}, x_{2,n} \mid n'$. Choose a factor $n_1 \mid v$ such that $x_{1,n-1} \nmid n_1$.

If $n_1 = n'$, then
$x_{1,n-1} \frac{n'}{x_{1,n}} \in \mathcal{G}(J_{2,n}),$
and hence
$x_{1,n-1} \frac{v}{x_{1,n}} \in J_{2,n}^q.$
Assume that $n_1 \ne n'$. Then
$n_1 = x_{1,1} \cdots x_{1,n-2} x_{1,n} x_{2,1} \cdots x_{2,n-1},$
and
$$n' = x_{1,1} \cdots x_{1,i-1} x_{1,i+1} \cdots x_{1,n} x_{2,1} \cdots x_{2,j-1} x_{2,j+1} \cdots x_{2,n}.$$
Define
$n_1' = x_{1,1} \cdots x_{1,n-1} x_{2,1} \cdots x_{2,j-1} x_{2,j+1} \cdots x_{2,n}$
and
$$n'' = x_{1,1} \cdots x_{1,i-1} x_{1,i+1} \cdots x_{1,n} x_{2,1} \cdots x_{2,j} \cdots x_{2,n-1}.$$
Then
$x_{1,n-1} \frac{n_1}{x_{1,n}} \cdot n' = n_1' n'' \in \mathcal{G}(J_{2,n}^2),$
and therefore
$x_{1,n-1} \frac{v}{x_{1,n}} \in J_{2,n}^q.$

More generally, suppose that
$a_{1,1} = b_{1,1}, \ldots, a_{1,t-1} = b_{1,t-1}, ~ a_{1,t} > b_{1,t}.$
If there exist $t_1, t_2$ such that $x_{1,t_1}, x_{1,t_2} > x_{1,t}$, then there exists a factor $n_1 \mid v$ such that $x_{1,t} \nmid n_1$. Since either $x_{2,t_1} \mid n_1$ or $x_{2,t_2} \mid n_1$, we obtain
$x_{1,t} \frac{n_1}{x_{1,t_i}} \in \mathcal{G}(J_{2,n})$
for some $i \in \{1,2\}$. Consequently,
$x_{1,t} \frac{v}{x_{1,t_i}} \in J_{2,n}^q.$
The argument for the second row is analogous. Thus, suppose that $a_{2,t} > b_{2,t}$. If there exist $t_1, t_2$ such that $x_{2,t_1}, x_{2,t_2} > x_{2,t}$, then the same argument applies. So let $a_{1,1}=b_{1,1},\dots ,a_{2,n-1}=b_{2,n-1} \text{ and } a_{2,n}>b_{2,n}.$ So $x_{2,t} = x_{2,n}$. Choose a factor $n_1 \mid v$ such that $x_{2,n} \nmid n_1$. Then $x_{2,1} \mid n_1$. If $x_{1,1} \mid n_1$, then
$x_{2,n} \frac{n_1}{x_{2,1}} \in \mathcal{G}(J_{2,n}),$
and the required exchange follows. Hence we may assume that $x_{1,1} \nmid n_1$. Suppose there exists a factor $n_2 \mid v$ such that $x_{1,1}, x_{2,1} \mid n_2$. Now let $n_1 = x_{1,2} \cdots x_{1,n} x_{2,1} \cdots x_{2,n-1}$ and $n_2 = x_{1,1} \cdots x_{1,i-1} x_{1,i+1} \cdots x_{1,n} x_{2,1} \cdots x_{2,j-1} x_{2,j+1} \cdots x_{2,n}$. Also let \\
$n_1' = x_{1,1} \cdots x_{1,i-1} x_{1,i+1} \cdots x_{1,n} x_{2,2} \cdots x_{2,n}$ and $$n_2' = x_{1,2} \cdots x_{1,n} x_{2,1} \cdots x_{2,j-1} x_{2,j+1} \cdots x_{2,n}.$$ Then $x_{2,n} \frac{n_1 n_2}{x_{2,1}} = n_1' n_2' \in \mathcal{G}(J_{2,n}^2)$. So $x_{2,n} \frac{v}{x_{2,1}} \in J_{2,n}^q$. We now prove the existence of such a factor. So, for a contradiction, assume that there is no factor $n_i \mid v$ such that $x_{1,1}, x_{2,1} \mid n_i$.

We now show that $a_{2,1} < b_{2,1}$. Suppose, to the contrary, that $a_{2,1} > b_{2,1}$. Let $S_1 = \{x_{2,2}, \ldots, x_{2,n}\}$. Suppose that $x_{2,1} \mid m_1, \ldots, m_i$ and $x_{2,1} \mid n_1, \ldots, n_j$. Then $i > j$. Consequently,
$\sum_{x \in S_1} \nu_x(v) = (q-j)(n-1) + j(n-2).$
Since $a_{2,n} > b_{2,n}$,
$\sum_{x \in S_1} \nu_x(u) \ge (q-j)(n-1) + j(n-2) + 1.$
On the other hand,
\[
\sum_{x \in S_1} \nu_x(u) = (q-i)(n-1) + i(n-2) < (q-j)(n-1) + j(n-2),
\]
a contradiction.

It remains to exclude the case $a_{2,1} = b_{2,1}$. Let
$S' = S_1 \setminus \{x_{2,n}\}.$
Suppose that $x_{2,1} \mid m_1, \ldots, m_i$ and $x_{2,1} \mid n_1, \ldots, n_i$. Then $x_{2,n} \mid m_{i+1}, \ldots, m_q$ and $x_{2,n} \mid n_{i+1}, \ldots, n_q$. Assume further that $x_{2,n} \mid n_j, \ldots, n_i$ and $x_{2,n} \mid m_k, \ldots, m_i$. Since $a_{2,n} > b_{2,n}$, we have $k < j$. Hence
$\sum_{x \in S'} \nu_x(u) < \sum_{x \in S'} \nu_x(v),$
contradicting $a_{2,2} = b_{2,2}, \ldots, a_{2,n-1} = b_{2,n-1}$. Therefore, $a_{2,1} < b_{2,1}$.

Finally, let $x_{2,1} \mid m_1, \ldots, m_i$ and $x_{2,1} \mid n_1, \ldots, n_j$. Then $i < j$. Hence $x_{1,1} \mid m_{i+1}, \ldots, m_q$ and $x_{1,1} \mid n_{j+1}, \ldots, n_q$. Therefore,
$a_{1,1} \ge q-i > q-j = b_{1,1},$
contradicting $a_{1,1} = b_{1,1}$. Hence there exists a factor $n_2 \mid v$ such that $x_{1,1}, x_{2,1} \mid n_2,$ which is a contradiction. So there exists a minimal generator $m\mid v$ of $\mathcal{G}(J_{2,n})$ such that $x_{1,1}, x_{2,1} \mid m.$ Thus, $J_{2,n}^q$ is weakly polymatroidal. By \cite[Theorem~1.3]{FS10}, every weakly polymatroidal ideal has linear quotients. Hence $J_{2,n}^q$ has linear quotients.
\end{proof}

\begin{obs}\label{cover-pol}
Let $J = J(R_{m,n})$ be the cover ideal of the rook graph $R_{m,n}$. By \cite[Theorem~3.2]{FHV10},
\[
J^2 = \bigcap_{\{x_{i,j},x_{k,\ell}\} \in E(R_{m,n})} (x_{i,j}, x_{k,\ell})^2 \cap \bigcap_{\substack{\{x_{r,s},x_{r,u},x_{r,v}\} \\ \text{triangle of } R_{m,n}}} (x_{r,s}^2, x_{r,u}^2, x_{r,v}^2) \cap \bigcap_{\substack{\{x_{e,f},x_{g,f},x_{h,f}\} \\ \text{triangle of } R_{m,n}}} (x_{e,f}^2, x_{g,f}^2, x_{h,f}^2).
\]
Since polarization commutes with finite intersections \cite[Proposition~2.3]{faridi}, we obtain
\[
\begin{aligned}
(J^2)^{\pol} ={}& \bigcap_{\{x_{i,j},x_{k,\ell}\} \in E(R_{m,n})} \bigl((x_{i,j}, x_{k,\ell})^2\bigr)^{\pol} 
\cap \bigcap_{\substack{\{x_{r,s},x_{r,u},x_{r,v}\} \\ \text{triangle of } R_{m,n}}} (x_{r,s}^2, x_{r,u}^2, x_{r,v}^2)^{\pol} \\
&\cap \bigcap_{\substack{\{x_{e,f},x_{g,f},x_{h,f}\} \\ \text{triangle of } R_{m,n}}} (x_{e,f}^2, x_{g,f}^2, x_{h,f}^2)^{\pol}.
\end{aligned}
\]
For an edge $\{x_{i,j}, x_{k,\ell}\}$, we have
$\bigl((x_{i,j}, x_{k,\ell})^2\bigr)^{\pol} = (x_{i,j,1} x_{i,j,2},\ x_{i,j,1} x_{k,\ell,1},\ x_{k,\ell,1} x_{k,\ell,2}).$
For a triangle $\{x_{r,s}, x_{r,u}, x_{r,v}\}$,
\[
(x_{r,s}^2, x_{r,u}^2, x_{r,v}^2)^{\pol} = \bigcap_{a,b,c \in \{1,2\}} (x_{r,s,a}, x_{r,u,b}, x_{r,v,c}),
\]
and similarly for a triangle in a column. Every prime in these latter intersections, except $(x_{r,s,2}, x_{r,u,2}, x_{r,v,2})$ and $(x_{e,f,2}, x_{g,f,2}, x_{h,f,2})$, contains one of the primes arising from the polarization of an edge factor and is therefore redundant. Consequently, $(J^2)^{\pol} = J(\mathcal{H}_{m,n})$, where $\mathcal{H}_{m,n}$ is the simple hypergraph with vertex set
$V(\mathcal{H}_{m,n}) = \{x_{i,j,\ell} \mid 1 \le i \le m,\ 1 \le j \le n,\ \ell \in \{1,2\}\},$
and edge set
\[
\begin{aligned}
E(\mathcal{H}_{m,n}) ={}& \Bigl\{ \{x_{i,j,\ell}, x_{i',j',\ell'}\} \mid \ell + \ell' \le 3,\ \{x_{i,j}, x_{i',j'}\} \in E(R_{m,n}) \Bigr\} \\
&\cup \Bigl\{ \{x_{i,j,2}, x_{i',j',2}, x_{i'',j'',2}\} \mid R_{m,n}[\{x_{i,j}, x_{i',j'}, x_{i'',j''}\}] \cong K_3 \Bigr\},
\end{aligned}
\]
where $K_3$ denotes the complete graph on $3$ vertices.
\end{obs}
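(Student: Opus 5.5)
The plan is to start from the Francisco--H\`a--Van Tuyl description of $J(G)^2$ \cite[Theorem~3.2]{FHV10}, specialize it to $G=R_{m,n}$, polarize term by term, and then discard redundant primes. The outcome should be an intersection of monomial primes indexed exactly by $E(\mathcal{H}_{m,n})$.

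\textbf{Step 1 (triangles of $R_{m,n}$).} By \cite[Theorem~3.2]{FHV10}, $J(G)^2$ is the intersection of $(x,y)^2$ over the edges $\{x,y\}$ of $G$ and of $(x^2,y^2,z^2)$ over the triangles $\{x,y,z\}$ of $G$. To apply this to $R_{m,n}$ I need all its triangles. Suppose three squares are pairwise in a common row or column, but not all in one row.
\begin{itemize}
\item Then some two of them, say $x_{i,j}$ and $x_{i',j}$ with $i\neq i'$, share a column $j$.
\item The third square must be adjacent to both. If it is not in column $j$, it would have to lie in row $i$ and in row $i'$, which is impossible.
\end{itemize}
Hence every triangle lies entirely in one row or entirely in one column. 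This gives the displayed formula for $J^2$.

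\textbf{Step 2 (polarize the pieces).} Polarization commutes with finite intersections of monomial ideals \cite[Proposition~2.3]{faridi}, so it suffices to polarize each piece and write it as an intersection of primes.
\begin{itemize}
\item For an edge piece, $\bigl((x,y)^2\bigr)^{\pol}=(x_1x_2,\,x_1y_1,\,y_1y_2)$. A direct check shows this equals $(x_1,y_1)\cap(x_1,y_2)\cap(x_2,y_1)$. These are exactly the primes $(x_\ell,y_{\ell'})$ with $\ell+\ell'\le 3$.
\item For a triangle piece, $(x^2,y^2,z^2)^{\pol}=(x_1x_2,\,y_1y_2,\,z_1z_2)$. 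This is the intersection of the eight primes $(x_a,y_b,z_c)$ with $a,b,c\in\{1,2\}$, as in the statement.
\end{itemize}

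\textbf{Step 3 (redundancy).} Consider a triangle prime $(x_a,y_b,z_c)$ in which at least one index equals $1$, say $a=1$. Since $1+b\le 3$, it contains the edge prime $(x_1,y_b)$ coming from the edge $\{x,y\}$. A prime containing another component of the same intersection can be dropped. So only $(x_2,y_2,z_2)$ survives from each row triangle and each column triangle.

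The result is $(J^2)^{\pol}=\bigcap_{e\in E(\mathcal{H}_{m,n})}(e)$, which is $J(\mathcal{H}_{m,n})$ by the definition of the cover ideal of a hypergraph. It remains to check that $\mathcal{H}_{m,n}$ is simple. A $3$-edge $\{x_2,y_2,z_2\}$ contains no $2$-edge, because any pair inside it has indices summing to $4>3$. The $2$-edges are pairwise incomparable.

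The main obstacle is Step~1: one must make sure the cited theorem of \cite{FHV10} is applied in the right form, with the triangle terms exactly as displayed and no further odd-cycle terms. I would re-read its hypotheses first; everything afterwards is bookkeeping with polarization.
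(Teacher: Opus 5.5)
Your route is the paper's: cite \cite[Theorem~3.2]{FHV10}, polarize piece by piece, and discard the triangle primes that contain an edge prime. Steps~2 and~3 and your simplicity check are correct. They are also more explicit than the paper. The decomposition $(x_1x_2,x_1y_1,y_1y_2)=(x_1,y_1)\cap(x_1,y_2)\cap(x_2,y_1)$ is exactly what the paper's phrase ``contains one of the primes arising from the polarization of an edge factor'' relies on.

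The gap is the one you flagged yourself, and it is real. In Step~1 you attribute to \cite{FHV10} a formula for $J(G)^2$ over edges and triangles only, for an arbitrary graph $G$. That formula is false in general.

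\begin{itemize}
\item Take $G=C_5$, which has no triangles, so the formula would give $J(C_5)^2=J(C_5)^{(2)}$.
\item But $x_1x_2x_3x_4x_5$ lies in every $(x_i,x_{i+1})^2$, hence in $J(C_5)^{(2)}$.
\item It has degree $5$, while $J(C_5)^2$ is generated in degree $6$.
\end{itemize}

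For a general graph, the non-edge components of $J(G)^2$ are indexed by all induced odd cycles of $G$; for a triangle this is the component $(x^2,y^2,z^2)$ you wrote. So checking that every triangle lies in one row or one column does not by itself give the displayed formula. You must also show that $R_{m,n}$ has no induced odd cycle of length $s\ge 5$.

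This is short. Let $v_1v_2\cdots v_s$ be an induced cycle with $s\ge 4$. Call each edge a row edge or a column edge; it cannot be both, since two distinct squares cannot share both row and column. Suppose $v_{t-1}v_t$ and $v_tv_{t+1}$ had the same type. Then $v_{t-1},v_t,v_{t+1}$ would lie in one row or one column, and $v_{t-1}v_{t+1}$ would be a chord, because $s\ge4$ makes them non-consecutive. Hence the types alternate around the cycle, and $s$ is even. Equivalently, $R_{m,n}$ is the line graph of the bipartite graph $K_{m,n}$, hence perfect, as the paper notes in Section~\ref{vnumber-sec}.

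With this added, the induced odd cycles of $R_{m,n}$ are exactly its triangles, and the rest of your argument goes through unchanged.
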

Let $\mathcal{H}$ be a hypergraph. We recall that the contraction of a vertex $v$, denoted by $\mathcal{H}/v$, is the hypergraph with vertex set
$V(\mathcal{H}/v) = V(\mathcal{H}) \setminus \{v\}$
and edge set
$E(\mathcal{H}/v) = \min\{e \setminus \{v\} \mid e \in E(\mathcal{H})\},$
where $\min$ denotes the set of minimal edges under inclusion.
The next result gives an obstruction to a linear resolution in the square case.

\begin{theorem}\label{equal-not}
For every $m \ge 2$, the ideal $J_{m,m}^2$ does not have a linear resolution. In particular, $J_{m,m}^2$ does not have linear quotients.
\end{theorem}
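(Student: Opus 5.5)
Since $J_{m,m}^2$ is generated in the single degree $2m(m-1)$, it has a linear resolution exactly when its polarization does. By Observation~\ref{cover-pol}, $(J_{m,m}^2)^{\pol}=J(\mathcal{H}_{m,m})$. By Alexander duality (Theorem~\ref{ER-Terai}(1)), $J(\mathcal{H}_{m,m})$ has a linear resolution if and only if the independence complex $\operatorname{Ind}(\mathcal{H}_{m,m})$ is Cohen--Macaulay. So the plan is to show that $\operatorname{Ind}(\mathcal{H}_{m,m})$ is not Cohen--Macaulay. As in Proposition~\ref{R^q_{m,m}}, we induct on $m$, using links. Links of Cohen--Macaulay complexes are Cohen--Macaulay, and the link of a vertex $v$ in $\operatorname{Ind}(\mathcal{H})$ is the independence complex of the contraction $\mathcal{H}/v$, restricted appropriately. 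This is why contraction was recalled just before the statement.

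\emph{Base case $m=2$.} Here $\mathcal{H}_{2,2}$ has no triangles, since $R_{2,2}=C_4$. It is therefore the graph $R^2_{2,2}$, and $J_{2,2}^2=J(C_4)^2=J(C_4)^{(2)}$ because $C_4$ is bipartite. A linear resolution would make the ideal componentwise linear, which contradicts Proposition~\ref{R^q_{m,m}}. Alternatively, one can check directly that $J(C_4)^2=(x_{11}x_{22},x_{12}x_{21})^2$ is a complete intersection squared and has a nonlinear syzygy.

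\emph{Inductive step.} Take $v=x_{1,1,1}$ and compute $\mathcal{H}_{m,m}/v$. Every edge $\{v,x_{i,j,\ell}\}$ with $x_{i,j}$ in row $1$ or column $1$ and $\ell\in\{1,2\}$ becomes the singleton $\{x_{i,j,\ell}\}$. Thus the link of $v$ lives on the vertices not in row $1$ or column $1$, namely copies $x_{i,j,1},x_{i,j,2}$ for $2\le i,j\le m$. The only possible extra vertices are $x_{1,1,2}$, which is now isolated and becomes a cone point, together with row-$1$/column-$1$ vertices that appear only in triangle edges $x_{\cdot,\cdot,2}$. Those triangle edges contain other row-$1$ vertices that were deleted as singletons, so they disappear under minimality. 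After checking this carefully, the link is a cone over $\operatorname{Ind}(\mathcal{H}_{m-1,m-1})$. Coning preserves and reflects Cohen--Macaulayness, so if $\operatorname{Ind}(\mathcal{H}_{m,m})$ were Cohen--Macaulay, then $\operatorname{Ind}(\mathcal{H}_{m-1,m-1})$ would be as well, contradicting the induction hypothesis.

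The main obstacle is verifying this contraction identification precisely. One must confirm that every triangle edge involving row $1$ or column $1$ becomes non-minimal after contraction, and that no new edges among the remaining vertices are created. Edges among vertices with $i,j\ge2$ are unchanged, since they do not contain $v$. Once this is settled, the final claim follows immediately, because linear quotients for an equigenerated ideal imply a linear resolution.
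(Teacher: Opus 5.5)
Your proof is correct and is essentially the paper's argument: you polarize to $J(\mathcal{H}_{m,m})$ and show that $\operatorname{Ind}(\mathcal{H}_{m,m})$ fails a Cohen--Macaulay-type property by passing to links (contractions), which preserve that property. The differences are minor. You work with Cohen--Macaulayness directly, which is legitimate since $J_{m,m}^2$ is equigenerated, and you induct on $m$ through the link of $x_{1,1,1}$ down to the explicit base case $J(C_4)^2$; the paper instead uses sequential Cohen--Macaulayness and a single chain of contractions ending at $C_4$. Your identification of that link as the cone with apex $x_{1,1,2}$ over $\operatorname{Ind}(\mathcal{H}_{m-1,m-1})$ is right. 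The aside about row-$1$/column-$1$ vertices ``appearing only in triangle edges'' is muddled but harmless: every row-$1$/column-$1$ vertex at either level is already deleted by a pair edge with $x_{1,1,1}$, so no such vertices survive.
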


\begin{proof}
By Observation~\ref{cover-pol},
$(J_{m,m}^2)^{\pol} = J(\mathcal{H}_{m,m}).$
We first show that the independence complex of $\mathcal{H}_{m,m}$ is not sequentially Cohen--Macaulay. Suppose, to the contrary, that $\mathcal{H}_{m,m}$ is sequentially Cohen--Macaulay. Since sequential Cohen--Macaulayness is preserved under contractions (see \cite[Proposition~2.3]{ProvLouis} and \cite[Lemma~2.2(1)]{Russ11}), the hypergraph
\[
\mathcal{H}_{m,m} / x_{1,1,1} / x_{2,2,1} / \cdots / x_{m-3,m-3,1} / x_{m-2,m-2,2} / x_{m-1,m-1,2} / x_{m,m,2} / x_{m,m,1} \]
is sequentially Cohen--Macaulay. However, this contraction is isomorphic to the $4$-cycle $C_4$, which is not sequentially Cohen--Macaulay by \cite[Theorem~10]{Wood2}, a contradiction. Hence $\mathcal{H}_{m,m}$ is not sequentially Cohen--Macaulay.

It follows that $J(\mathcal{H}_{m,m})$ is not componentwise linear. Since polarization preserves componentwise linearity, we conclude that $J_{m,m}^2$ is not componentwise linear. As $J_{m,m}^2$ is generated in a single degree, it does not have a linear resolution. Finally, since an ideal with linear quotients is componentwise linear, $J_{m,m}^2$ does not have linear quotients.
\end{proof}

Theorems~\ref{2-ord} and \ref{equal-not} show that $J_{2,n}^q$ has linear quotients for every $q \ge 1$, whereas $J_{m,m}^2$ does not have linear quotients for any $m \ge 2$. Computations using the \textsc{SimplicialDecomposability} package \cite{Cook} in \textsc{Macaulay2} \cite{M2} further show that $J_{3,4}^2$ has linear quotients. On the other hand, $\reg(J_{4,5}^2) = 33$, while every minimal generator of $J_{4,5}^2$ has degree $16$. Thus, $J_{4,5}^2$ does not have a linear resolution and, consequently, does not have linear quotients. These examples indicate that the linear-quotient property of ordinary powers depends more subtly on $(m,n)$ than does that of symbolic powers.

This leads to the following question.

\begin{question}\label{ord-que}
What are the necessary and sufficient conditions on $(m,n)$ for $J_{m,n}^q$ to have linear quotients for every $q \ge 2$?
\end{question}

\section{Castelnuovo-Mumford regularity of $J_{m,n}$ and its symbolic powers}\label{reg-sec}

In this section, we study the Castelnuovo-Mumford regularity and the maximal degrees of minimal generators of the symbolic powers of $J_{m,n}$. We first recall the regularity of $J_{m,n}$, which exhibits a transition at the threshold $n=2m-1$. We then determine the maximal degree of a minimal generator of $J_{m,n}^{(q)}$ and show that it is linear in $q$. Finally, we determine the regularity of the higher symbolic powers. Unlike the case $q=1$, their behavior is governed by the distinction between the off-diagonal case $m<n$ and the diagonal case $m=n$.

We first record some consequences of the known results on the depth and Cohen-Macaulayness of chessboard complexes; see \cite[Corollary~1.4]{BLVZ94} and Theorem~\ref{ER-Terai}. 

\begin{proposition}\label{reg-base}
We have $\reg(J_{m,n}) = m(n-1)$ if $n \ge 2m-1$, whereas, if $n < 2m-1$, then
$m(n-1)+1 \le \reg(J_{m,n}) \le mn - \left\lfloor \frac{m+n+1}{3} \right\rfloor.$
\end{proposition}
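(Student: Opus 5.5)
The proof reduces everything to the Eagon--Reiner/Terai dictionary of Theorem~\ref{ER-Terai} together with known facts about the chessboard complex. First, $J_{m,n}$ is generated in a single degree. Its minimal generators are the complements $x_{V\setminus F}$ of facets $F$ of $\Delta_{m,n}$, that is, of the non-faces of the dual. By purity of $\Delta_{m,n}$ (every facet has exactly $m$ vertices), each generator has degree $mn-m=m(n-1)$.

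\textbf{Case $n\ge 2m-1$.} Here $\Delta_{m,n}$ is Cohen--Macaulay by Garst's theorem (or shellable by Ziegler). Hence $J_{m,n}$ has a linear resolution by Theorem~\ref{ER-Terai}(1), or linear quotients by Theorem~\ref{base-known}. An ideal generated in degree $d$ with a linear resolution has regularity $d$, so $\reg(J_{m,n})=m(n-1)$.

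\textbf{Case $n<2m-1$, lower bound.} Now $\Delta_{m,n}$ is not Cohen--Macaulay, so by Theorem~\ref{ER-Terai}(1) $J_{m,n}$ has no linear resolution. Since $J_{m,n}$ is generated in degree $d=m(n-1)$, we have $\beta_{i,j}(J_{m,n})=0$ for $j<i+d$. The resolution being nonlinear therefore forces some $\beta_{i,j}\ne0$ with $j\ge i+d+1$. This gives $\reg(J_{m,n})\ge m(n-1)+1$.

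\textbf{Case $n<2m-1$, upper bound.} By Theorem~\ref{ER-Terai}(2), $\reg(J_{m,n})=\pd_R(\mathbb K[\Delta_{m,n}])$. By Auslander--Buchsbaum this equals $mn-\depth \mathbb K[\Delta_{m,n}]$. Now invoke \cite[Corollary~1.4]{BLVZ94}, which says $\Delta_{m,n}$ is $(\nu-1)$-connected with $\nu=\lfloor (m+n+1)/3\rfloor-1$, and the same holds for all links, since links of faces are smaller chessboard complexes $\Delta_{m-k,n-k}$. This yields $\depth\mathbb K[\Delta_{m,n}]\ge \nu+1=\lfloor (m+n+1)/3\rfloor$. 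Here I use the standard fact that the depth of $\mathbb K[\Delta]$ equals one plus the largest $d$ such that the $d$-skeleton is Cohen--Macaulay.

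This last step, translating the connectivity bound into a depth bound, is the only delicate point. Concretely, one checks that the skeleton $\Delta^{(\nu)}$ is Cohen--Macaulay via Reisner's criterion, using the connectivity of links of $\Delta_{m,n}$. The needed inequality for the link of a $(k-1)$-face holds because $\lfloor (m+n-2k+1)/3\rfloor -1\ge \nu-k$. With this, $\reg(J_{m,n})\le mn-\lfloor (m+n+1)/3\rfloor$, completing the proof.
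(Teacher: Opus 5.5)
Your proof is correct and is the argument the paper has in mind. The paper cites \cite[Corollary~1.4]{BLVZ94} for the depth bound and combines it with Theorem~\ref{ER-Terai} (and Garst/Ziegler for the Cohen--Macaulay threshold); you additionally unpack the BLVZ connectivity into $\depth \mathbb{K}[\Delta_{m,n}]\ge\lfloor (m+n+1)/3\rfloor$ via Reisner's criterion on the skeleton $\Delta^{(\nu)}$.

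One small correction in that unpacking. The BLVZ connectivity of a link $\Delta_{m-k,n-k}$ is $\min\{m-k,\lfloor (m+n-2k+1)/3\rfloor\}-2$, not the floor term alone; for example, $\Delta_{1,4}$ is disconnected. So you must also check $m-k-1\ge \nu-k$, which holds because $n\le 2m-1$ gives $\lfloor (m+n+1)/3\rfloor\le m$.
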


We next study the maximal degree of a minimal generator of a symbolic power of $J_{m,n}$. In general, this degree need not be eventually linear in the symbolic exponent; see \cite[Theorem~5.15]{DHNT20}. For the ideals $J_{m,n}$, however, it has a particularly simple form: the maximal degree is linear in the symbolic exponent with zero constant term. We begin with the following setup.

\begin{setup}\label{set-up-deg}
Set $G = R_{m,n}^q$. For $1 \le r \le m$ and $1 \le s \le q$, let $H_{r,s}$ denote the induced subgraph of $G$ on
$\{x_{r,j,s} \mid 1 \le j \le n\}.$
Let $U$ be a maximal independent set of $G$ with $|U| < qm$.
\end{setup}

\begin{lemma}\label{1-lem}
Assume Setup~\ref{set-up-deg}. If $s \ge \left\lfloor \frac{q+1}{2} \right\rfloor + 1$, then $V(H_{r,s}) \cap U \neq \emptyset$ for every $1 \le r \le m$.
\end{lemma}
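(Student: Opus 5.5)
The plan is a direct combinatorial argument on $G=R_{m,n}^q$. It uses only the edge rule: $x_{i,j,p}\sim x_{i',j',p'}$ if and only if $(i,j)\neq(i',j')$, the squares share a row or a column, and $p+p'\le q+1$. Combined with maximality of $U$, this should give a pigeonhole contradiction. I do not expect to need the hypothesis $|U|<qm$ from Setup~\ref{set-up-deg}.

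\emph{Step 1: $H_{r,s}$ is edgeless.} Put $s_0=\lfloor\frac{q+1}{2}\rfloor+1$ and let $s\ge s_0$. Then $2s\ge 2\lfloor\frac{q+1}{2}\rfloor+2>q+1$. Hence no two vertices of $H_{r,s}$ are adjacent.

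\emph{Step 2: upward closure of levels.} If $x_{a,b,p}\in U$ and $p'>p$, then $x_{a,b,p'}\in U$. First, copies of the same square are never adjacent. Second, every neighbour $x_{i,j,t}$ of $x_{a,b,p'}$ satisfies $t+p\le t+p'\le q+1$, so it is also a neighbour of $x_{a,b,p}$. Thus $U\cup\{x_{a,b,p'}\}$ is independent, and maximality gives $x_{a,b,p'}\in U$.

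\emph{Step 3: a neighbour of $x_{r,j,s}$ in column $j$.} Suppose, for a contradiction, that $V(H_{r,s})\cap U=\emptyset$ for some $r$. By Step~2, no vertex $x_{r,j,p}$ with $p\le s$ lies in $U$. Fix a column $j$. By maximality, $x_{r,j,s}$ has a neighbour $x_{i,j',p}\in U$, and this neighbour satisfies
\[
p\le q+1-s\le q-\left\lfloor\tfrac{q+1}{2}\right\rfloor=\left\lfloor\tfrac q2\right\rfloor<s .
\]
Because $p<s$, this neighbour cannot lie in row $r$. It must therefore lie in column $j$, giving some $x_{i_j,j,p_j}\in U$ with $i_j\neq r$ and $p_j\le\lfloor q/2\rfloor$.

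\emph{Step 4: pigeonhole.} Step~3 defines a map $j\mapsto i_j$ from the $n$ columns into the $m-1$ rows other than $r$. Since $n\ge m>m-1$, there are columns $j\neq j'$ with $i_j=i_{j'}$. Then $x_{i_j,j,p_j}$ and $x_{i_j,j',p_{j'}}$ are distinct squares in the same row, and $p_j+p_{j'}\le 2\lfloor q/2\rfloor\le q<q+1$. So these two vertices of $U$ are adjacent, contradicting independence. Hence $V(H_{r,s})\cap U\neq\emptyset$ for every $r$.

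The main thing to get right is Step~2, the upward closure, which is what lets us exclude row-$r$ neighbours at low levels. The remaining steps are bookkeeping with the inequality $p+p'\le q+1$.
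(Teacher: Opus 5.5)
Your proof is correct, and it follows the same outline as the paper's. Assume $V(H_{r,s})\cap U=\emptyset$, use maximality to give each $x_{r,j,s}$ a neighbour in $U$, show that neighbour cannot lie in row $r$, and then pigeonhole the $n$ columns into the $m-1$ other rows. The bound $2(q+1-s)\le q+1$ makes the two colliding vertices adjacent.

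The one real difference is how you exclude a row-$r$ neighbour. The paper uses a local two-step chase. If $z=x_{r,u,\ell}\in U$ is adjacent to $x_{r,p,s}$, it looks at $x_{r,u,s}$, which is also outside $U$ and so has some neighbour $z'\in U$ at some level $w$. Whether $z'$ lies in row $r$ or in column $u$, the bound $\ell+w\le 2(q+1-s)\le q+1$ makes $z'$ adjacent to $z$.

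You instead prove once that maximal independent sets of $R_{m,n}^q$ are closed upward in the level index. This rests on the nesting $N_G(x_{a,b,p'})\subseteq N_G(x_{a,b,p})$ for $p'>p$. With it, a row-$r$ vertex of $U$ at a level below $s$ would force a vertex of $H_{r,s}$ into $U$.

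Your route is a little cleaner, and the upward-closure fact is reusable: it is exactly what Claim~2 in the proof of Lemma~\ref{main-lemma} proves by hand. The paper's route avoids the auxiliary fact but needs the contrary hypothesis at a second column.

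You are right that the hypothesis $|U|<qm$ is never needed; the paper's argument does not use it either. Your Step~1 is also unnecessary, though harmless.
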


\begin{proof}
Suppose, to the contrary, that $V(H_{r,s}) \cap U = \emptyset$ for some $1 \le r \le m$. We first show that there exists $1 \le j \le n$ such that $U \cup \{x_{r,j,s}\}$ is independent. Suppose otherwise. Then, for each $j$, there exists a vertex of $U$ adjacent to $x_{r,j,s}$.

Fix $p \in \{1, \ldots, n\}$ and let $z \in U \cap N_G(x_{r,p,s})$. Suppose first that $z = x_{r,u,\ell}$. Since $z$ is adjacent to $x_{r,p,s}$, we have $u \ne p$ and $\ell + s \le q+1$. Since $U \cup \{x_{r,u,s}\}$ is also dependent, there exists $z' \in U \cap N_G(x_{r,u,s})$. If $z' = x_{r,v,w}$, then $u \ne v$ and $w+s \le q+1$, so $\ell + w \le q+1$, contradicting the independence of $U$ since $z\in N_G(z')$. Thus $z' = x_{r',u,s'}$ for some $r' \ne r$. Since $s+s' \le q+1$ and $s+\ell \le q+1$, we have $s'+\ell \le 2(q+1-s) \le q+1$. Hence $z$ and $z'$ are adjacent, again contradicting the independence of $U$.

Therefore, no vertex of $U \cap N_G(x_{r,p,s})$ lies in row $r$. Consequently, for each $p \in \{1, \ldots, n\}$, there exists a neighbor $x_{a,p,k} \in U$ with $a \ne r$ and $s+k \le q+1$. Since $n > m-1$, the pigeonhole principle yields distinct columns $p, p'$ and a row $a \ne r$ such that $x_{a,p,k}, x_{a,p',k'} \in U$. Moreover, $s+k \le q+1$ and $s+k' \le q+1$, and hence $k+k' \le 2(q+1-s) \le q+1$. Thus $x_{a,p,k}$ and $x_{a,p',k'}$ are adjacent, contradicting the independence of $U$.

Therefore, $U \cup \{x_{r,j,s}\}$ is independent for some $j \in \{1, \ldots, n\}$, contradicting the maximality of $U$. Hence $V(H_{r,s}) \cap U \neq \emptyset$.
\end{proof}

\begin{lemma}\label{q-odd}
Assume Setup~\ref{set-up-deg}. Suppose that $q$ is odd and set $t = \frac{q+1}{2}$. Then $V(H_{r,t}) \cap U \neq \emptyset$ for every $1 \le r \le m$.
\end{lemma}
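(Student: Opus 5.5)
The plan is to rerun the argument of Lemma~\ref{1-lem} verbatim with $s=t=\frac{q+1}{2}$. The hypothesis $s \ge \lfloor \frac{q+1}{2}\rfloor+1$ was used there only through the inequality
\[
2(q+1-s) \le q+1 .
\]
When $q$ is odd and $s=t$, this holds with equality, since $2(q+1-t)=q+1$. So every step of that proof goes through unchanged, and the proof is a careful re-check that nothing else depended on the strict bound on $s$.

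Concretely, suppose for contradiction that $V(H_{r,t})\cap U=\emptyset$ for some row $r$. By maximality of $U$, for every column $p$ there is some $z\in U$ adjacent to $x_{r,p,t}$. I first rule out that $z$ lies in row $r$.
\begin{enumerate}
\item Suppose $z=x_{r,u,\ell}$ with $u\ne p$ and $\ell\le q+1-t$. Then $U\cup\{x_{r,u,t}\}$ is also dependent, so there is $z'\in U$ adjacent to $x_{r,u,t}$.
\item If $z'=x_{r,v,w}$, then $v\ne u$ and $w\le q+1-t$. Hence $\ell+w\le 2(q+1-t)=q+1$, so $z$ and $z'$ are adjacent in $G$.
\item If $z'=x_{r',u,s'}$ with $r'\ne r$, then $s'\le q+1-t$. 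It shares column $u$ with $z$, and $\ell+s'\le q+1$, so again $z$ and $z'$ are adjacent.
\end{enumerate}
Either way this contradicts the independence of $U$.

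Hence each column $p$ has a neighbor $x_{a_p,p,k_p}\in U$ with $a_p\ne r$ and $k_p\le q+1-t$. There are $n\ge m>m-1$ columns but only $m-1$ possible rows $a_p$. By pigeonhole, two distinct columns $p,p'$ share the same row $a$. Then $k_p+k_{p'}\le q+1$, so these two vertices of $U$ are adjacent, a contradiction. Therefore some $x_{r,j,t}$ can be added to $U$, contradicting maximality.

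I expect no real obstacle. The only delicate point is checking that each adjacency claim needs only the non-strict inequality $\le q+1$ from the definition of $E(R^q_{m,n})$, which is exactly what the equality $2(q+1-t)=q+1$ provides. I will also check the pigeonhole step, which needs only $n>m-1$ and holds under the standing assumption $n\ge m$. The hypothesis $|U|<qm$ from Setup~\ref{set-up-deg} is not needed for this step.
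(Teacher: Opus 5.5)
Your proposal is correct and follows the paper's route exactly: the paper proves this lemma by saying that the argument of Lemma~\ref{1-lem} carries over to $s=t$. You correctly isolate the inequality $2(q+1-s)\le q+1$ as the only use of the bound on $s$, note that it holds with equality at $s=t=\frac{q+1}{2}$, and check the adjacency cases and the pigeonhole step (which use only maximality of $U$ and $n>m-1$), thereby supplying the details the paper leaves implicit.
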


\begin{proof}
Similarly, by an argument analogous to that in the proof of Lemma~\ref{1-lem}, we can show that if $V(H_{r,t}) \cap U =\emptyset$ then $U \cup \{x_{r,j,t}\}$ is independent for some $j.$ Hence it will contradict the maximality of U. So $V(H_{r,t}) \cap U \neq \emptyset$ for every $1 \le r \le m$.
\end{proof}

\begin{lemma}\label{main-lemma}
Assume Setup~\ref{set-up-deg}, and let $S_r := V(H_{r,q-\overline{t-1}})$. If $V(H_{r,t}) \cap U = \emptyset$, then $|S_r \cap U| \ge 2$.
\end{lemma}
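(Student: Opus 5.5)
Throughout I read $S_r$ as the $r$th row of layer $q+1-t$, i.e. $S_r=\{x_{r,j,q+1-t}\mid 1\le j\le n\}$, with $t\le\lfloor q/2\rfloor$, so that $q+1-t\ge\lfloor\frac{q+1}{2}\rfloor+1$. The key structural fact I will use repeatedly is a \emph{monotonicity of neighbourhoods}. In $G=R_{m,n}^q$ one has $N_G(x_{i,j,\ell'})\subseteq N_G(x_{i,j,\ell})$ whenever $\ell\le\ell'$, because the edge condition $\ell+p'\le q+1$ only becomes harder as the layer grows. Also $x_{i,j,\ell}$ and $x_{i,j,\ell'}$ are never adjacent.

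\textbf{Step 1 (at least one vertex).} Since $q+1-t\ge\lfloor\frac{q+1}{2}\rfloor+1$, Lemma~\ref{1-lem} applies to the layer $s=q+1-t$. It gives $S_r\cap U\ne\emptyset$.

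\textbf{Step 2 (set-up for a contradiction).} Suppose $S_r\cap U=\{y\}$ with $y=x_{r,p,q+1-t}$. Note that $U$ contains no vertex $x_{r,u,\ell}$ with $\ell\le t$. Indeed, if $u\ne p$, such a vertex would be adjacent to $y$, since $\ell+(q+1-t)\le q+1$. If $u=p$ and $\ell\le t$, then by monotonicity $N_G(x_{r,p,t})\subseteq N_G(x_{r,p,\ell})$, which is disjoint from $U$. The only other candidate neighbour of $x_{r,p,t}$ would be $x_{r,p,\ell}$ itself, but equal-position vertices are non-adjacent. So $U\cup\{x_{r,p,t}\}$ would be independent, contradicting maximality. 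Combined with $V(H_{r,t})\cap U=\emptyset$, this confines the row-$r$ part of $U$ to layers strictly greater than $t$.

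\textbf{Step 3 (the vertex $x_{r,p,t}$).} By maximality, $x_{r,p,t}$ has a neighbour $z=x_{a,b,k}\in U$ with $t+k\le q+1$. Since $z$ is not adjacent to $y$, we get $k>t$, and $z$ lies either in row $r$ or in column $p$. I then run the chain argument of Lemma~\ref{1-lem} on this witness. Take $z'\in U$ adjacent to the companion vertex $x_{r,b,t}$ (if $z$ is in row $r$) or $x_{a',p,t}$ (if $z$ is in column $p$). Then show, using the layer bounds $k,k'\le q+1-t$ together with the constraint that every row-$r$ vertex of $U$ has layer $>t$, that $z$ and $z'$ are forced to be adjacent, or that $z'\in S_r$ with $z'\ne y$. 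Either outcome contradicts the assumption that $y$ is the unique element of $S_r\cap U$.

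The main obstacle is Step 3. In Lemma~\ref{1-lem} the pigeonhole step worked because $k+k'\le 2(q+1-s)\le q+1$. Here the relevant bound is only $k+k'\le 2(q+1-t)$, which can exceed $q+1$. My first attempt is to avoid pigeonhole and instead use the exchange $U'=(U\setminus\{y\})\cup\{x_{r,p,t}\}\cup\{x_{r,j,q+1-t}\}$ for a suitable column $j$, showing that this contradicts the maximality of $U$. If that fails, I would fall back on a column-by-column case analysis of the witnesses of $x_{r,j,t}$ for $j\ne p$, all of which are currently absorbed by $y$.
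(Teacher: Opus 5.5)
Your Steps 1 and 2 are correct. The monotonicity $N_G(x_{i,j,\ell'})\subseteq N_G(x_{i,j,\ell})$ for $\ell\le\ell'$ is a clean way to get what the paper proves as Claims 1 and 2, namely that no vertex of $U$ in row $r$ has layer $\le t$. Your observation that the witness $z=x_{a,b,k}$ of $x_{r,p,t}$ satisfies $t<k\le q+1-t$ is also right.

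But Step 3 is not an argument, and both routes you sketch fail. The set $U'=(U\setminus\{y\})\cup\{x_{r,p,t},x_{r,j,q+1-t}\}$ is never independent: $x_{r,p,t}$ and $x_{r,j,q+1-t}$ lie in row $r$, in different columns, with layers summing to $q+1$. Even an independent $U'$ would not contradict maximality, because $U\not\subseteq U'$. The witnesses of $x_{r,j,t}$ for $j\ne p$ carry no information, and neither does your companion $x_{r,b,t}$, since $y$ itself is adjacent to all of them. Your worry that the relevant bound is only $k+k'\le 2(q+1-t)$ comes from looking at neighbours of layer-$t$ vertices, which is the wrong layer.

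The missing idea is to use the witnesses of the \emph{other} vertices of $S_r$. For each $j\ne p$ we have $x_{r,j,q+1-t}\notin U$, so it has a neighbour $u_j\in U$, necessarily of layer $\ell_j\le t$. By Step 2, $u_j$ is not in row $r$, so $u_j=x_{a_j,j,\ell_j}$ with $a_j\ne r$. Two such witnesses in a common row are adjacent because $\ell_i+\ell_j\le 2t\le q$, so the pigeonhole now works.

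With only row $r$ excluded, this already settles $m<n$ ($n-1$ columns, $m-1$ rows). It does not settle $m=n$, where a second row must be excluded; this is what the paper's Claims 3 and 4 do with your $z$.
\begin{itemize}
\item Suppose $z=x_{r,b,k}$ with $b\ne p$. Then $k<q+1-t$, since $z\ne y$.
\item By Step 2, the witness of $x_{r,b,q+1-t}$ lies in column $b$ with layer $w\le t$. It is adjacent to $z$ because $k+w\le q+1$, a contradiction.
\item Hence $z=x_{r_0,p,k}$ with $r_0\ne r$.
\item Then $z$ is adjacent to every $x_{r_0,j,\ell}$ with $j\ne p$ and $\ell\le t$, so $a_j\ne r_0$.
\item The $n-1$ witnesses $u_j$ therefore occupy at most $m-2<n-1$ rows, so two of them lie in a common row and are adjacent vertices of $U$.
\end{itemize}
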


\begin{proof}
Since $H_{r,t} \cap U = \emptyset$, we have $t \le \left\lfloor \frac{q+1}{2} \right\rfloor$. Now by Lemma~\ref{1-lem} $S_r\cap U\not= \varnothing$. Suppose, to the contrary, that $S_r \cap U = \{x_{r,1,q-\overline{t-1}}\}$. Then $\{x_{r,p,q-\overline{t-1}}\} \cup U$ is dependent for every $p \ne 1$.

\medskip
\noindent\textsc{Claim 1.} $x_{r,x_0,k'} \notin U$ for all $x_0 \ne 1$ and $1 \le k' < t$.

Indeed, suppose, to the contrary, that $x_{r,x_0,k'} \in U$ for some $x_0 \ne 1$ and $1 \le k' < t$. Then $x_{r,1,q-\overline{t-1}} \in N_G(x_{r,x_0,k'})$, which contradicts the fact that $U$ is independent. Hence $x_{r,x_0,k'} \notin U$ for all $x_0 \ne 1$ and $1 \le k' < t$.

\medskip
\noindent\textsc{Claim 2.} $x_{r,1,t-\gamma} \notin U$ for every $\gamma \in \mathbb{N}$.

Suppose, to the contrary, that $x_{r,1,t-\gamma} \in U$ for some $\gamma \in \mathbb{N}$. Since $H_{r,t} \cap U = \varnothing$, the set $U \cup \{x_{r,1,t}\}$ is dependent. Hence there exists $x_{s,z_1,z_2} \in U \cap N_G(x_{r,1,t})$. We consider two cases. If $s=r$, then $z_1 \ne 1$ and $z_2+t \le q+1$. Consequently, $z_2+(t-\gamma) \le q+1-\gamma < q+1$, and hence $x_{r,1,t-\gamma} \in N_G(x_{r,z_1,z_2})$, which contradicts the independence of $U$. If $s \ne r$, then necessarily $z_1=1$. Writing the vertex as $x_{s,1,y'}$, we have $y'+t \le q+1$. Thus $y'+(t-\gamma) \le q+1-\gamma < q+1$, and therefore $x_{r,1,t-\gamma} \in N_G(x_{s,1,y'})$, again contradicting the independence of $U$. Hence $x_{r,1,t-\gamma} \notin U$ for every $\gamma \in \mathbb{N}$.

Now, since $U \cup \{x_{r,1,t}\}$ is dependent, there exists $z \in N_G(x_{r,1,t}) \cap U$.

\medskip
\noindent\textsc{Claim 3.} The vertex $z$ is of the form $z = x_{r_0,1,p}$, $r_0 \ne r$, $p \le q-t+1$.

Indeed, write $z = x_{r',1,t'}$ with $r' \ne r$. Since $z \in N_G(x_{r,1,t})$, we have $t'+t \le q+1$, and hence $t' \le q-t+1$. It remains to show that $z$ cannot have the form $x_{r,s,z_0}$ with $s \ne 1$. Suppose, to the contrary, that $z = x_{r,s,z_0}$, $s \ne 1$. Since $z \in N_G(x_{r,1,t})$, we have $z_0+t \le q+1$, and hence $z_0 \le q-t+1$. On the other hand, for every $i = 2, \ldots, n$, $U \cup \{x_{r,i,q-\overline{t-1}}\}$ is dependent. In particular, for $i=s$, there exists $z' \in U \cap N_G(x_{r,s,q-\overline{t-1}})$. If $z' = x_{r,x',y'}$, then $x' \ne s$ and $y' < t$. Since $z_0 \le q-t+1$, it follows that $z = x_{r,s,z_0} \in N_G(x_{r,x',y'}) = N_G(z')$, contradicting the independence of $U$. Thus $z'$ must have the form $z' = x_{r'',s,w}$, $r'' \ne r$, $w \le t$. Again, $z = x_{r,s,z_0} \in N_G(z')$, which is a contradiction. Therefore $z \ne x_{r,s,z_0}$ for every $s \ne 1$. Consequently, $z$ must be of the form $z = x_{r_0,1,p}$, $r_0 \ne r$, $p \le q-t+1$.

Now, for every $i = 2, \ldots, n$, $U \cup \{x_{r,i,q-\overline{t-1}}\}$ is dependent.

\medskip
\noindent\textsc{Claim 4.} If $u \in N_G(x_{r,i,q-\overline{t-1}}) \cap U$, then $u = x_{r_1,i,p}$, $r_1 \ne r, r_0$, $p \le t$, $i \ne 1$.

Indeed, suppose that $u = x_{r,p',q'} \in U\cap N_G(x_{r,i,q-t+1})$. Then $q'<t$.Then by \textsc{Claim~1} $p'=1$. Now $q' \le t-1$. This contradicts \textsc{Claim~2}. Hence $u$ must have first coordinate different from $r$ and second coordinate different from $1$. Consequently, $u = x_{r_1,i,p}$ for some $r_1 \ne r$, $i \ne 1$, and $p \le t$.

It remains to show that $r_1 \ne r_0$. Suppose, to the contrary, that $r_1 = r_0$. By \textsc{Claim~3}, we have $z = x_{r_0,1,p_0}$ with $p_0 \le q-t+1$. Since $u = x_{r_0,i,p}$ with $p \le t$, the defining adjacency condition gives $p+p_0 \le q+1$. Hence $z \in N_G(u)$, contradicting the independence of $U$. Therefore $r_1 \ne r_0$, and the claim follows.

Finally, since $m-2 < n-1$, there exist two vertices $z_1, z_2 \in I$ with $z_1 \in N_G(z_2)$, contradicting the independence of $U$. Therefore, $|S_r \cap U| \ge 2$.
\end{proof}

The preceding lemmas provide the combinatorial ingredients needed to determine the degrees of the minimal generators of $J_{m,n}^{(q)}$. Combining them with the construction above, we obtain the following formula.

\begin{theorem}\label{deg-sym}
For every $q \ge 1$,
$\omega\!\left(J_{m,n}^{(q)}\right) = qm(n-1).$
\end{theorem}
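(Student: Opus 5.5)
By Lemma~\ref{lq-cl} we have $\bigl(J_{m,n}^{(q)}\bigr)^{\pol} = J(R_{m,n}^q)$. Polarization gives a degree-preserving bijection between minimal generators, so $\omega\bigl(J_{m,n}^{(q)}\bigr) = \omega\bigl(J(R_{m,n}^q)\bigr)$. The minimal generators of $J(G)$, for $G = R_{m,n}^q$, are the monomials of minimal vertex covers of $G$. These are exactly the complements of maximal independent sets $U$ of $G$, and the corresponding generator has degree $|V(G)| - |U| = qmn - |U|$. Hence the theorem is equivalent to
\[
\min\{|U| \mid U \text{ a maximal independent set of } R_{m,n}^q\} = qm,
\]
and I would prove the two inequalities separately.

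\emph{Existence of a maximal independent set of size $qm$.} Since $m \le n$, take the diagonal rook placement and lift it to all layers:
\[
U_0 = \{x_{i,i,s} \mid 1 \le i \le m,\ 1 \le s \le q\}.
\]
Two such vertices with $i \ne i'$ are non-attacking squares, so they are not adjacent in $R_{m,n}$ and hence not adjacent in $G$. Two such vertices with the same square and different layers are not adjacent either, because edges of $R_{m,n}^q$ only come from edges of $R_{m,n}$. So $U_0$ is independent with $|U_0| = qm$. For maximality, let $x_{a,b,p} \notin U_0$; then $b \ne a$. The vertex $x_{a,a,q+1-p} \in U_0$ lies in the same row, and its layer satisfies $p + (q+1-p) = q+1 \le q+1$, so it is adjacent to $x_{a,b,p}$. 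This yields a minimal generator of degree $qmn - qm = qm(n-1)$.

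\emph{No maximal independent set has size $< qm$.} Suppose, as in Setup~\ref{set-up-deg}, that $U$ is maximal independent with $|U| < qm$. I will show that for every row $r$,
\[
\sum_{s=1}^{q} |V(H_{r,s}) \cap U| \ge q.
\]
Summing over the $m$ rows then gives $|U| \ge qm$, a contradiction. To prove the row bound, pair layer $t$ with layer $q - t + 1$ for $1 \le t \le \lfloor q/2 \rfloor$. In each pair, the layer $q - t + 1 \ge \lfloor (q+1)/2 \rfloor + 1$ meets $U$ by Lemma~\ref{1-lem}.
\begin{enumerate}
\item If $H_{r,t}$ also meets $U$, the pair contributes at least $2$.
\item If $H_{r,t}$ misses $U$, then Lemma~\ref{main-lemma} gives $|S_r \cap U| \ge 2$, so the pair again contributes at least $2$.
\end{enumerate}
The layers of the two members of distinct pairs are all different, so these contributions add up. When $q$ is odd, the unpaired middle layer $t = (q+1)/2$ meets $U$ by Lemma~\ref{q-odd}. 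In total row $r$ contributes at least $2\lfloor q/2 \rfloor + (q \bmod 2) = q$.

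The main obstacle is the case where a low layer $H_{r,t}$ misses $U$ entirely. This is exactly what Lemma~\ref{main-lemma} addresses: it compensates by forcing two vertices in the paired layer. With that lemma available, the proof reduces to the bookkeeping above, and one only needs to verify carefully that the pairing partitions the $q$ layers, so that no vertex is counted twice.
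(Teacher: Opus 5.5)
Your proposal is correct and follows essentially the same route as the paper: the diagonal set $\{x_{i,i,p}\}$ gives a maximal independent set of size $qm$ in $R_{m,n}^q$, Lemmas~\ref{1-lem}, \ref{q-odd}, and \ref{main-lemma} rule out maximal independent sets of size less than $qm$, and complementation plus polarization then give $\omega = qm(n-1)$. Your pairing of layer $t$ with layer $q-t+1$ in each row is the counting the paper leaves implicit when it says the lemmas imply $|U|\ge qm$, and your maximality witness $x_{a,a,q+1-p}$ works just as well as the paper's $x_{a,a,1}$.
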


\begin{proof}
Set $G = R_{m,n}^q$ and $S = \{x_{i,i,p} \mid 1 \le i \le m,\ 1 \le p \le q\}$. Then $S$ is an independent set of cardinality $qm$. Moreover, $S$ is maximal. Indeed, if $x_{a,b,c} \notin S$, then $a \ne b$, and hence $x_{a,b,c}$ is adjacent to $x_{a,a,1}$. Thus $S \cup \{x_{a,b,c}\}$ is not independent.

Now let $U$ be a maximal independent set of $G$. If $|U| < qm$, then Lemmas~\ref{1-lem}, \ref{q-odd}, and \ref{main-lemma} imply that $|U| \ge qm$, a contradiction. Hence every maximal independent set of $G$ has cardinality at least $qm$. So we obtain
$\beta_0(G) = qm,$
where $\beta_0(G)$ denotes the minimum cardinality of a maximal independent set of $G$. The complement of a maximal independent set is a minimal vertex cover. Therefore,
\[
\max\{|C| \mid C \text{ is a minimal vertex cover of } G\} = |V(G)| - \beta_0(G) = qmn - qm = qm(n-1).
\]
Consequently, $\omega(J(G)) = qm(n-1)$. By Lemma~\ref{lq-cl},
$\left(J_{m,n}^{(q)}\right)^{\pol} = J(G),$
and polarization preserves the degrees of minimal generators. Hence $\omega\!\left(J_{m,n}^{(q)}\right) = qm(n-1)$.
\end{proof}

We now turn to the regularity of the symbolic powers. The degree formula above provides a natural lower bound for the regularity. While the case $q=1$ is governed by the threshold $n=2m-1$, for higher symbolic powers the behavior is determined by whether $m<n$ or $m=n$. The next theorem gives the exact regularity in both cases.

\begin{theorem}\label{reg-sym}
For every integer $q \ge 2$,
\[
\reg\!\left(J_{m,n}^{(q)}\right) =
\begin{cases}
qm(n-1), & \text{if } m < n,\\[2mm]
qm(m-1)+1, & \text{if } m = n \ge 2.
\end{cases}
\]
\end{theorem}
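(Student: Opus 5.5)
Throughout I work with the polarization: by Lemma~\ref{lq-cl}, $(J_{m,n}^{(q)})^{\pol}=J(R_{m,n}^q)$, and polarization preserves graded Betti numbers, so $\reg(J_{m,n}^{(q)})=\reg(J(R_{m,n}^q))$. By Theorem~\ref{ER-Terai}(2), $\reg(J(G))=\pd(\mathbb K[\operatorname{Ind}(G)])$. The lower bound $\reg\ge\omega=qm(n-1)$ comes from Theorem~\ref{deg-sym}. The plan is to obtain the upper bound by vertex decomposition and the diagonal lower bound by a non-linearity argument.

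\emph{Case $m<n$.} If $q$ is even, or $n\ge 2m-1$, then Theorem~\ref{main-classification} gives linear quotients. Linear quotients imply componentwise linearity, and for componentwise linear ideals the regularity equals the maximal generating degree, so $\reg=\omega=qm(n-1)$. The remaining case is $m<n<2m-1$ with $q\ge3$ odd. Here I would use a shedding argument with depth/projective dimension control. Removing the vertices $x_{1,j,1}$ successively (they are shedding, as in the proofs of Theorem~\ref{R^{2q}_{m,n}} and Lemma~\ref{tech-lm}) yields the standard inequality
\[
\pd(\operatorname{Ind}G)\le\max\{\pd(\operatorname{Ind}(G\setminus x)),\ \pd(\operatorname{Ind}(G\setminus N[x]))+1\}.
\]
Equivalently, the regularity of the cover ideal satisfies
\[
\reg J(G)\le\max\{\reg J(G\setminus x),\ \reg J(G\setminus N[x])+\deg x\}.
\]
I would then induct on $m$ (and on $q$ through the graphs $G_{s,m,n}^q$). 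The base situations are $R_{m-1,n-1}^q$, which has $m-1<n-1$, and $R_{m,n}^{q-2}$. The target bound $qm(n-1)$ should follow by bookkeeping the degrees of the removed vertices, since $\deg_G x_{1,1,1}=q(n-1)+q(m-1)$ minus adjustments. Alternatively, one can use the Betti splitting $J=x\,J(G\setminus N[x])$-type decomposition $J(G)=x J(G\setminus x)\cap\ldots$. The main obstacle is exactly this odd-$q$ range, where $q=1$ is non-linear. I would first try to reduce to $q=1$ plus an even power via a splitting $J^{(q)}$ relative to $J^{(q-1)}$ (the polarized graph $R^q$ contains $R^{q-1}$-type pieces). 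Failing that, I would use the \cite{SS23}-style results relating $J^{(q)}$ to $J^{(2)}$ and $J^{(q-2)}$.

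\emph{Case $m=n$.} For the lower bound, by Proposition~\ref{R^q_{m,m}} the ideal is not componentwise linear. It is also not equigenerated, so non-linearity alone is not enough. Instead I would exhibit a nonzero Betti number $\beta_{i,i+qm(m-1)+1}$. Using Hochster's formula on $\operatorname{Ind}(R^q_{m,m})$ restricted to the vertex set of a copy of the $C_4$ found in Proposition~\ref{R^q_{m,m}}, together with the link $R^q_{m,m}\setminus N[x_{1,1,1}]\cong R^q_{m-1,m-1}$, one gets a recursion in $m$ that raises the shift by exactly one beyond $\omega$. For the upper bound I would run the same shedding inequality as in the case $m<n$. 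The defect of $+1$ arises only at the base $R_{1,1}^q$ or $R_{2,2}^q$ and propagates without growth, giving $qm(m-1)+1$.
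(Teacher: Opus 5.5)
You handle the linear-quotient cases correctly, and by a shorter route than the paper. For $m<n$ with $n\ge 2m-1$ or $q$ even, Theorem~\ref{main-classification} gives linear quotients, hence componentwise linearity, hence $\reg=\omega=qm(n-1)$ by Theorem~\ref{deg-sym}.

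The gap is everything else, which is the real content of the theorem. For $m<n<2m-1$ and odd $q\ge 3$ you give no argument, and your recursion is misstated. For every vertex $x$ one has $J(G)=x\,J(G\setminus x)+x_{N(x)}J(G\setminus N[x])$, with intersection $x\,x_{N(x)}J(G\setminus N[x])$. This gives $\reg J(G)\le\max\{\reg J(G\setminus x)+1,\ \reg J(G\setminus N[x])+\deg x\}$. You dropped the $+1$, and your version fails even for shedding vertices. Take the $4$-cycle $a\,b\,c\,w$ plus a vertex $x$ joined to $a,b,c$: $x$ is shedding and $J(G)=(abc,bwx,acx)$ has regularity $4$, but your bound gives $3$.

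With the correct shift, each deleted vertex costs $+1$. Stripping the $mn$ level-one vertices of $R^q_{m,n}$ leaves $R^{q-2}_{m,n}$ up to isolated vertices, at cost $mn$; this is $J^{(q)}:x_V=J^{(q-2)}$. For $q=3$ you therefore land on $J_{m,n}$ itself (and, in other branches, on $J_{m-a,n-a}$), whose regularity exceeds $m(n-1)$ in this range. No bookkeeping of degrees closes the induction without an explicit upper bound on $\reg(J_{r,s})$ for $s<2r-1$. The paper imports $\reg(J_{r,s})\le rs-\lfloor (r+s+1)/3\rfloor$ from \cite{BLVZ94} and checks that the slack $2m(n-1)-mn$ absorbs the excess. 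For example, $3m(n-1)-\bigl(2mn-\lfloor\frac{m+n+1}{3}\rfloor\bigr)=m(n-3)+\lfloor\frac{m+n+1}{3}\rfloor\ge 0$. Reducing via $J^{(2q'+1)}=J\,(J^{(2)})^{q'}$ cannot work either, since any additive bound overshoots by $\reg(J_{m,n})-m(n-1)\ge 1$.

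For $m=n$ the upper bound lacks the same input and has a further problem: $x_{1,1,1}$ is not a shedding vertex of $R^q_{m,m}$. The pigeonhole step needs $n>m$, and any maximal independent set of $G\setminus N[x_{1,1,1}]$ containing $\{x_{i,i,1}:2\le i\le m\}$ blocks all of $N(x_{1,1,1})$. So only the inequality above is available. Your claim that the extra $+1$ ``arises only at the base and propagates without growth'' is then exactly what has to be proved, and it is delicate. The bound $qm(m-1)+1$ is attained in several branches: in the paper, $a=m-1$ for $q\ge 4$, and $r\in\{1,2\}$ at $q=3$, where the \cite{BLVZ94} estimate for $\reg(J_{r,r})$ is used with zero margin. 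The paper organizes all branches by induction on $m+q$ via \cite[Lemma~4.1]{fakhari2025castelnuovo}, $\reg(J^{(q)})\le\max_{A\sqcup B=V}\{\reg((J^{(q)}+(A)):x_B)+|B|\}$. It combines this with $J^{(q)}:x_V=J^{(q-2)}$ and $J^{(q)}+(A)=u^qJ(G\setminus N[A])^{(q)}+(A)$.

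Your diagonal lower bound via links matches the paper's (\cite[Lemma~4.4]{fakhari2025castelnuovo}). However, restricting to an induced $C_4$ only gives $\reg\ge 3$. The base case $\reg J(R^q_{2,2})\ge 2q+1$ needs a face with disconnected link. For example, take $F=\{x_{1,1,p},x_{2,2,p}:2\le p\le q\}$; then $R^q_{2,2}\setminus N[F]$ is the $4$-cycle on $x_{1,1,1},x_{1,2,q},x_{2,2,1},x_{2,1,q}$. The Eagon--Reiner form of Hochster's formula then gives $\beta_{1,2q+2}\neq 0$. The paper instead cites \cite{ARSS} for this base case.
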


\begin{proof}
 We first establish the lower bounds. Recall that $J_{m,n} = J(R_{m,n})$. Suppose that $m < n$. Since $\omega(J(R_{m,n})) = m(n-1)$, we have
\[
\reg\!\left(J(R_{m,n})^{(q)}\right) \ge q\,\omega\!\left(J(R_{m,n})\right) = qm(n-1).
\]
Now assume that $m = n \ge 2$ and set $G = R_{m,m}$. We claim that
$\reg\!\left(J(G)^{(q)}\right) \ge qm(m-1)+1.$
We prove this by induction on $m$. For $m=2$, we have $G = C_4 = K_{2,2}$, and \cite[Theorem~4.6]{ARSS} yields
$\reg\!\left(J(G)^{(q)}\right) = 2q+1 = qm(m-1)+1.$
Suppose that $m \ge 3$. By \cite[Lemma~4.4]{fakhari2025castelnuovo},
$\reg\!\left(J(G)^{(q)}\right) \ge \reg\!\left(J(G \setminus N_G[A])^{(q)}\right) + q|N_G(A)|.$
Taking $A = \{x_{1,1}\}$, we have $G \setminus N_G[A] \cong R_{m-1,m-1}$. Thus, by the induction hypothesis,
$\reg\!\left(J(G \setminus N_G[A])^{(q)}\right) \ge q(m-1)(m-2)+1.$
Since $|N_G(A)| = 2(m-1)$, it follows that
\[
\reg\!\left(J(G)^{(q)}\right) \ge q(m-1)(m-2)+1+2q(m-1) = qm(m-1)+1.
\]

We now prove the upper bounds simultaneously by induction on $m+q$. If $m=1$, then $G = R_{1,n} = K_n$ and
$J(G) = \left( \prod_{j \ne 1} x_j,\ \prod_{j \ne 2} x_j,\ \ldots,\ \prod_{j \ne n} x_j \right),$
which is the squarefree Veronese ideal of degree $n-1$. Hence $J(G)^{(q)} = J(G)^q$. Since every power of a squarefree Veronese ideal has a linear resolution,
$\reg\!\left(J(G)^{(q)}\right) = q(n-1) = qm(n-1).$
Thus, we may assume that $m \ge 2$. Set $G = R_{m,n}$ and $J = J(G)$. By \cite[Lemma~4.1]{fakhari2025castelnuovo}, it suffices to verify that, for every pair of subsets $A, B \subseteq V(G)$ satisfying $A \cap B = \emptyset$ and $A \cup B = V(G)$, where $(A)$ denotes the ideal generated by the variables corresponding to the vertices in $A$ and $x_B = \prod_{x_i \in B} x_i$, one has
\begin{equation}\label{req-eq}
\reg\!\left((J^{(q)} + (A)) : x_B\right) + |B| \le
\begin{cases}
qm(n-1), & \text{if } m < n,\\[2mm]
qm(m-1)+1, & \text{if } m = n.
\end{cases}
\end{equation}
We distinguish the following cases.

\medskip
\noindent\textbf{Case 1: $B = \emptyset$.}
Then $A = V(G)$, and hence
$(J^{(q)} + (A)) : x_B = (A).$
Since $(A)$ is generated by variables, we have
$\reg\!\left((J^{(q)} + (A)) : x_B\right) = 1.$
Therefore,
$\reg\!\left((J^{(q)} + (A)) : x_B\right) + |B| = 1,$
and \eqref{req-eq} follows.

\medskip
\noindent\textbf{Case 2: $A = \emptyset$.}
Then $B = V(G)$, and
$(J^{(q)} + (A)) : x_B = J^{(q)} : x_{V(G)}.$
By \cite[Lemma~3.4]{fakhari2017depth},
$J^{(q)} : x_{V(G)} = J^{(q-2)}.$
If $q=2$, then $J^{(0)} = S$, and hence
$\reg\!\left(J^{(2)} : x_{V(G)}\right) + |V(G)| = mn.$
If $m<n$, then $mn \le 2m(n-1)$, while, if $m=n$, then $m^2 \le 2m(m-1)+1$. Thus, \eqref{req-eq} holds for $q=2$.
Suppose that $q=3$. Then $J^{(3)} : x_{V(G)} = J$. By Proposition~\ref{reg-base}, if $n<2m-1,$ then,
$\reg(J_{m,n}) \le mn - \left\lfloor \frac{m+n+1}{3} \right\rfloor,$
we obtain
$\reg\!\left(J^{(3)} : x_{V(G)}\right) + |V(G)| \le 2mn - \left\lfloor \frac{m+n+1}{3} \right\rfloor.$

If $m<n<2m-1$, then
\[
3m(n-1) - \left( 2mn - \left\lfloor \frac{m+n+1}{3} \right\rfloor \right) = m(n-3) + \left\lfloor \frac{m+n+1}{3} \right\rfloor \ge 0.
\]
Also if $n\ge 2m-1,$ then
$\reg(J_{m,n})=m(n-1)$ so in that case,
\[3m(n-1)-m(n-1)-mn=2m(n-1)-mn=m(n-2)\ge0\]
If $m=n$, then
\[
3m(m-1)+1 - \left( 2m^2 - \left\lfloor \frac{2m+1}{3} \right\rfloor \right) = m(m-3)+1 + \left\lfloor \frac{2m+1}{3} \right\rfloor \ge 0.
\]
Thus, \eqref{req-eq} holds for $q=3$.
Finally, let $q \ge 4$. By the induction hypothesis,
\[
\reg\!\left(J^{(q-2)}\right) \le
\begin{cases}
(q-2)m(n-1), & m < n,\\[2mm]
(q-2)m(m-1)+1, & m = n.
\end{cases}
\]
Hence, if $m<n$,
$\reg\!\left(J^{(q)} : x_{V(G)}\right) + |V(G)| \le (q-2)m(n-1) + mn \le qm(n-1),$
whereas, if $m=n$,
$\reg\!\left(J^{(q)} : x_{V(G)}\right) + |V(G)| \le (q-2)m(m-1)+1+m^2 \le qm(m-1)+1.$
Thus, \eqref{req-eq} holds when $A = \emptyset$.

\medskip
\noindent\textbf{Case 3:} $A$ contains two adjacent vertices.

Suppose that $x_i, x_j \in A$ and $\{x_i, x_j\} \in E(G)$. Then every vertex cover of $G$ contains at least one of $x_i$ and $x_j$, and hence $J \subseteq (A)$. It follows that $J^{(q)} \subseteq (A)$. Since $A \cap B = \emptyset$, we have
$(J^{(q)} + (A)) : x_B = (A) : x_B = (A).$
Consequently,
$\reg\!\left((J^{(q)} + (A)) : x_B\right) = 1.$
Moreover, $|A| \ge 2$, and hence $|B| = |V(G)| - |A| = mn - |A| \le mn - 2$. Therefore,
$\reg\!\left((J^{(q)} + (A)) : x_B\right) + |B| \le mn - 1,$
which is bounded above by the right-hand side of \eqref{req-eq}.

\medskip
\noindent\textbf{Case 4:} $A$ is an independent set.

Since $A \ne \emptyset$, put $a = |A|$. Because $A$ is independent in the rook graph, its vertices lie in distinct rows and distinct columns. Hence $1 \le a \le m$. The $a$ vertices of $A$ determine $a$ rows and $a$ columns. Therefore,
$|N_G(A)| = a(m+n-a-1).$
Let $u = \prod_{x_i \in N_G(A)} x_i$. Then $\deg(u) = a(m+n-a-1)$. Moreover, deleting the closed neighborhood of $A$ removes precisely those $a$ rows and $a$ columns, and hence
$G \setminus N_G[A] \cong R_{m-a,n-a}.$
By \cite[Lemma~4.2]{fakhari2025castelnuovo},
$J^{(q)} + (A) = u^q J(G \setminus N_G[A])^{(q)} + (A).$
Since $A \cap B = \emptyset$,
\[
(J^{(q)} + (A)) : x_B = \left( u^q J(G \setminus N_G[A])^{(q)} : x_B \right) + (A).
\]
Furthermore, $B = N_G(A) \cup (V(G) \setminus N_G[A])$, and hence $x_B = u\, x_{V(G) \setminus N_G[A]}$. Therefore, by \cite[Lemma~3.4]{fakhari2017depth},
$(J^{(q)} + (A)) : x_B = u^{q-1} J(G \setminus N_G[A])^{(q-2)} + (A).$
Since the elements of $A$ form a regular sequence modulo $u^{q-1} J(G \setminus N_G[A])^{(q-2)}$, we obtain
\begin{equation}\label{reg-eq2}
\reg\!\left((J^{(q)} + (A)) : x_B\right) = (q-1)\deg(u) + \reg\!\left(J(G \setminus N_G[A])^{(q-2)}\right).
\end{equation}

We now distinguish the cases $q=2$, $q=3$, and $q \ge 4$.

\medskip
\noindent\textit{Subcase 4.1: $q=2$.}

Since $J(G \setminus N_G[A])^{(0)} = S$, equation~\eqref{reg-eq2} yields
$\reg\!\left((J^{(2)} + (A)) : x_B\right) = a(m+n-a-1).$
Moreover, $|B| = mn-a$. Consequently,
$\reg\!\left((J^{(2)} + (A)) : x_B\right) + |B| = a(m+n-a-1) + mn-a.$
If $m<n$, then
\[
2m(n-1) - \left[ a(m+n-a-1) + mn-a \right] = (m-a)(n-a-2) \ge 0.
\]
Indeed, putting $a=m$ gives zero, and since $1 \le a < m < n$, we have $m-a \ge 0$ and $n-a-2 \ge 0$. Thus,
$\reg\!\left((J^{(2)} + (A)) : x_B\right) + |B| \le 2m(n-1).$
If $m=n$, then
\[
2m(m-1)+1 - \left[ a(2m-a-1) + m^2-a \right] = (m-a-1)^2 \ge 0.
\]
Hence,
$\reg\!\left((J^{(2)} + (A)) : x_B\right) + |B| \le 2m(m-1)+1.$

\medskip
\noindent\textit{Subcase 4.2:} $q=3$ and $m<n$.

Put $r = m-a$ and $s = n-a$. Then $r < s$, and $G \setminus N_G[A] \cong R_{r,s}$. Suppose $n < 2m-a-1$. Then by Proposition~\ref{reg-base},
$\reg(J_{r,s}) \le rs - \left\lfloor \frac{r+s+1}{3} \right\rfloor,$
and equation~\eqref{reg-eq2} gives
\[
\begin{aligned}
\reg\!\left((J^{(3)} + (A)) : x_B\right) + |B| &\le 2a(m+n-a-1) + (m-a)(n-a) \\
&\quad - \left\lfloor \frac{m+n-2a+1}{3} \right\rfloor + mn - a.
\end{aligned}
\]
Moreover,
\[
\begin{aligned}
&3m(n-1) - \Bigg[ 2a(m+n-a-1) + (m-a)(n-a) 
- \left\lfloor \frac{m+n-2a+1}{3} \right\rfloor + mn - a \Bigg] \\
&= (m-a)(n-a-3) + \left\lfloor \frac{m+n-2a+1}{3} \right\rfloor.
\end{aligned}
\]
Since $1 \le a \le m < n$, the right-hand side is nonnegative. Indeed, putting $r = m-a$ and $d = n-m$, we have $r \ge 0$, $d \ge 1$, and
\[
(m-a)(n-a-3) + \left\lfloor \frac{m+n-2a+1}{3} \right\rfloor = r(r+d-3) + \left\lfloor \frac{2r+d+1}{3} \right\rfloor.
\]
If $r+d \ge 3$, the first term is nonnegative. If $r+d = 2$, then either $r=0$ and $d=2$, or $r = d = 1$. The above expression equals $-1 + \left\lfloor \frac{4}{3} \right\rfloor = 0$ when $r=d=1$, and equals $1$ when $r=0$ and $d=2$. If $r+d=1$, then the only possibility is $r=0$ and $d=1$, and the above expression is again nonnegative.
Thus,
$\reg\!\left((J^{(3)} + (A)) : x_B\right) + |B| \le 3m(n-1).$
Now suppose $n \ge 2m-a-1$. Then
$\reg(J_{r,s}) = (m-a)(n-a-1),$
and similarly it can be shown that
$3m(n-1) - 2a(m+n-a-1) -(m-a)(n-a-1) - (mn-a) = (m-a)(n-a-2) \ge 0.$
Therefore,
$\reg\!\left((J^{(3)} + (A)) : x_B\right) + |B| \le 3m(n-1).$

\medskip
\noindent\textit{Subcase 4.3:} $q=3$ and $m=n$.

If $a=m$, then $G \setminus N_G[A]$ is empty. Hence
$\reg\!\left((J^{(3)} + (A)) : x_B\right) = 2m(m-1).$
Moreover, $|B| = m^2-m = m(m-1)$. Consequently,
\[
\reg\!\left((J^{(3)} + (A)) : x_B\right) + |B| = 3m(m-1) < 3m(m-1)+1.
\]
Now suppose that $1 \le a \le m-1$, and put $r = m-a$. Then $G \setminus N_G[A] \cong R_{r,r}$. By Proposition~\ref{reg-base},
$\reg(J_{r,r}) \le r^2 - \left\lfloor \frac{2r+1}{3} \right\rfloor,$
together with \eqref{reg-eq2}, we obtain
\[
\reg\!\left((J^{(3)} + (A)) : x_B\right) + |B| \le 2a(2m-a-1) + r^2 - \left\lfloor \frac{2r+1}{3} \right\rfloor + m^2 - a.
\]
Since $r = m-a$, a direct calculation yields
\[
\begin{aligned}
&3m(m-1)+1 - \Bigg[ 2a(2m-a-1) + r^2 
- \left\lfloor \frac{2r+1}{3} \right\rfloor + m^2 - a \Bigg] \\
&= r^2-3r+1 + \left\lfloor \frac{2r+1}{3} \right\rfloor.
\end{aligned}
\]
For $r=1,2$, the right-hand side is $0$, while for $r \ge 3$ it is positive. Hence
\[
\reg\!\left((J^{(3)} + (A)) : x_B\right) + |B| \le 3m(m-1)+1.
\]

\medskip
\noindent\textit{Subcase 4.4:} $q \ge 4$ and $m<n$.

If $a=m$, then $G \setminus N_G[A]$ is empty. Hence, by \eqref{reg-eq2},
$\reg\!\left((J^{(q)} + (A)) : x_B\right) = (q-1)m(n-1).$
Moreover, $|B| = mn-m = m(n-1)$. Consequently,
$\reg\!\left((J^{(q)} + (A)) : x_B\right) + |B| = qm(n-1).$
Now assume that $1 \le a \le m-1$. By the induction hypothesis,
$$\reg\!\left(J(R_{m-a,n-a})^{(q-2)}\right) \le (q-2)(m-a)(n-a-1).$$
Therefore, by \eqref{reg-eq2},
\[
\begin{aligned}
&\reg\!\left((J^{(q)} + (A)) : x_B\right) + |B| \\
&\le (q-1)a(m+n-a-1) + (q-2)(m-a)(n-a-1) + mn-a.
\end{aligned}
\]
A direct calculation gives
\[
\begin{aligned}
&qm(n-1) - \Big[ (q-1)a(m+n-a-1) + (q-2)(m-a)(n-a-1) + mn-a \Big] \\
&= (m-a)(n-a-2) \ge 0.
\end{aligned}
\]
Indeed, since $1 \le a \le m-1$ and $m<n$, we have $m-a \ge 1$ and $n-a-2 \ge n-m-1 \ge 0$. Thus
$\reg\!\left((J^{(q)} + (A)) : x_B\right) + |B| \le qm(n-1).$

\medskip
\noindent\textit{Subcase 4.5:} $q \ge 4$ and $m=n$.

If $a=m$, then $G \setminus N_G[A]$ is empty. Hence, by \eqref{reg-eq2},
$\reg\!\left((J^{(q)} + (A)) : x_B\right) = (q-1)m(m-1).$
Since $|B| = m^2-m = m(m-1)$, we obtain
\[
\reg\!\left((J^{(q)} + (A)) : x_B\right) + |B| = qm(m-1) < qm(m-1)+1.
\]
Now suppose that $1 \le a \le m-1$, and put $r = m-a$. By the induction hypothesis,
$\reg\!\left(J(R_{r,r})^{(q-2)}\right) \le (q-2)r(r-1)+1.$
Hence
\[
\reg\!\left((J^{(q)} + (A)) : x_B\right) + |B| \le (q-1)a(2m-a-1) + (q-2)r(r-1) + 1 + m^2 - a.
\]
Since $r = m-a$, a direct calculation gives
\[
\begin{aligned}
&qm(m-1)+1 - \Big[ (q-1)a(2m-a-1)
+ (q-2)r(r-1) + 1 + m^2 - a \Big] 
= r(r-2).
\end{aligned}
\]
If $r \ge 2$, then $r(r-2) \ge 0$, and hence
$\reg\!\left((J^{(q)} + (A)) : x_B\right) + |B| \le qm(m-1)+1.$
It remains to consider the case $r=1$, that is, $a=m-1$. Then $G \setminus N_G[A] \cong R_{1,1}$, whose cover ideal is the whole polynomial ring. Thus
$\reg\!\left(J(R_{1,1})^{(q-2)}\right) = 0.$
It follows from \eqref{reg-eq2} that
$\reg\!\left((J^{(q)} + (A)) : x_B\right) = (q-1)m(m-1).$
Moreover, $|B| = m^2-(m-1) = m(m-1)+1$. Consequently,
$\reg\!\left((J^{(q)} + (A)) : x_B\right) + |B| = qm(m-1)+1.$
Thus, \eqref{req-eq} holds in all cases. By \cite[Lemma~4.1]{fakhari2025castelnuovo},
\[
\reg\!\left(J_{m,n}^{(q)}\right) \le
\begin{cases}
qm(n-1), & m < n,\\[2mm]
qm(m-1)+1, & m = n.
\end{cases}
\]
This completes the proof.
\end{proof}

\begin{remark}
The case $m < n < 2m-1$ exhibits an interesting contrast between linear quotients and regularity. By Theorem~\ref{main-classification}, the symbolic powers $J_{m,n}^{(q)}$ fail to have linear quotients for odd $q$. Nevertheless, Theorem~\ref{reg-sym} shows that
\[
\reg\!\left(J_{m,n}^{(q)}\right) = \omega\!\left(J_{m,n}^{(q)}\right) = qm(n-1)
\]
for every $q \ge 2$. Thus, the failure of linear quotients for odd symbolic powers does not affect the regularity: it is still determined by the maximal degree of a minimal generator.
\end{remark}

As a consequence of Theorems~\ref{2-ord} and~\ref{reg-sym}, we obtain the following.

\begin{corollary}\label{reg-comparision}
For every $q \ge 1$ and $n \ge 2$,
$\reg\!\left(J_{2,n}^q\right) = \reg\!\left(J_{2,n}^{(q)}\right).$
Moreover, if $m<n$, then
$\reg\!\left(J_{m,n}^{(q)}\right) \le \reg\!\left(J_{m,n}^q\right)$
for all $q \ge 2$. If $m=n$, then
$\reg\!\left(J_{m,m}^{(2)}\right) \le \reg\!\left(J_{m,m}^2\right).$
\end{corollary}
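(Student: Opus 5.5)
The corollary has three claims, and I would treat each by comparing a known value of $\reg$ for symbolic powers with a lower bound for $\reg$ of ordinary powers.

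\emph{First claim} ($m=2$). For $n\ge 3$, Theorem~\ref{2-ord} shows that $J_{2,n}^q$ has linear quotients. The ideal $J_{2,n}$ is generated in degree $2(n-1)$, so $J_{2,n}^q$ is equigenerated in degree $2q(n-1)$. Hence it has a linear resolution and $\reg(J_{2,n}^q)=2q(n-1)$.

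On the symbolic side, Theorem~\ref{reg-sym} gives $\reg(J_{2,n}^{(q)})=2q(n-1)$ for $q\ge2$, since $2<n$. For $q=1$ the two ideals coincide, so there is nothing to prove.

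The case $n=2$ needs separate handling, because it lies outside Theorem~\ref{2-ord}. Here $J_{2,2}=J(C_4)$ is the cover ideal of a bipartite graph, so $J_{2,2}^{(q)}=J_{2,2}^q$ for all $q$ (standard for bipartite graphs; alternatively $C_4=K_{2,2}$ and one can invoke the cited \cite[Theorem~4.6]{ARSS}). The equality of regularities is then trivial.

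\emph{Second claim} ($m<n$). I would first invoke $J_{m,n}^{(q)}=J_{m,n}^q$ degreewise in the relevant range, or more simply use the general lower bound $\reg(I)\ge\omega(I)$. Every product of $q$ minimal generators of $J_{m,n}$ has degree $qm(n-1)$. I need that at least one such product is a minimal generator of $J_{m,n}^q$. Since $J_{m,n}$ is equigenerated, $J_{m,n}^q$ is equigenerated in degree $qm(n-1)$, and every product of $q$ generators is minimal. Therefore
\[
\reg(J_{m,n}^q)\ge \omega(J_{m,n}^q)=qm(n-1)=\reg\bigl(J_{m,n}^{(q)}\bigr)
\]
by Theorem~\ref{reg-sym}.

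\emph{Third claim} ($m=n$, $q=2$). Theorem~\ref{reg-sym} gives $\reg(J_{m,m}^{(2)})=2m(m-1)+1$. The ideal $J_{m,m}^2$ is equigenerated in degree $2m(m-1)$, and by Theorem~\ref{equal-not} it has no linear resolution. Hence $\reg(J_{m,m}^2)\ge 2m(m-1)+1$, which gives the inequality.

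\emph{Main obstacle.} The only delicate point is the first claim at $n=2$. There I would rely on the equality of symbolic and ordinary powers for cover ideals of bipartite graphs, since $R_{2,2}=C_4$ is bipartite.
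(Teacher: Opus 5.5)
Your proposal is correct and follows the route the paper intends: linear quotients of $J_{2,n}^q$ together with Theorem~\ref{reg-sym} for $m=2$, the bound $\reg \ge \omega$ for $m<n$, and the failure of a linear resolution from Theorem~\ref{equal-not} for $m=n$, $q=2$. Your bipartite argument for $n=2$ usefully covers a case that Theorem~\ref{2-ord} does not. You should delete the aside about invoking $J_{m,n}^{(q)}=J_{m,n}^q$ ``degreewise'': it is false once $n\ge 3$ (e.g.\ $\alpha(J_{m,n}^{(2)})=mn<2m(n-1)=\alpha(J_{m,n}^2)$), though your argument never actually uses it.
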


This naturally leads to the following question.

\begin{question}
If $m=n$, does
$\reg\!\left(J_{m,m}^{(q)}\right) \le \reg\!\left(J_{m,m}^q\right)$
hold for every $q \ge 3$? More generally, can one characterize all triples $(m,n,q)$ with $m \le n$ and $q \ge 2$ for which
$\reg\!\left(J_{m,n}^{(q)}\right) = \reg\!\left(J_{m,n}^q\right)?$
\end{question}

\section{Initial degree, Waldschmidt constant, and $\vv$-number}\label{vnumber-sec}

In this section, we study the initial degree and the $\vv$-number of the symbolic and ordinary powers of $J_{m,n}$. We begin by determining the initial degrees of the symbolic powers and, consequently, the Waldschmidt constant. The parity of the symbolic exponent plays a crucial role in the resulting formula.

\begin{theorem}\label{min-deg}
For every $q \ge 1$,
\[
\alpha\!\left(J_{m,n}^{(q)}\right) =
\begin{cases}
\dfrac{mnq}{2}, & \text{if } q \text{ is even},\\[2mm]
\dfrac{mn(q-1)}{2} + m(n-1), & \text{if } q \text{ is odd}.
\end{cases}
\]
Consequently,
$\widehat{\alpha}(J_{m,n}) = \frac{mn}{2}.$
\end{theorem}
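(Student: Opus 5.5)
Since $J_{m,n}=J(R_{m,n})$ is a cover ideal, $J^{(q)}$ is generated by monomials $x^{\mathbf a}$ with $a_u+a_v\ge q$ for every edge $\{u,v\}$ of $R_{m,n}$. So $\alpha(J^{(q)})$ is the minimum of $\sum a_u$ over nonnegative integer vectors satisfying these edge constraints, i.e.\ a minimum weight $q$-fold vertex cover of the rook graph. I would give a lower bound by a counting/LP argument and match it with explicit monomials, treating $q$ even and $q$ odd separately.

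\emph{Even $q=2k$.} For the upper bound, take $(x_{1,1}\cdots x_{m,n})^{k}$: every edge gets $k+k=q$, so it lies in $J^{(2k)}$ and has degree $mnk=mnq/2$. For the lower bound, decompose the vertex set of $R_{m,n}$ into a perfect matching of edges, or a near-matching, using rows. If $n$ is even, each row splits into $n/2$ disjoint edges, and each edge contributes at least $q$, giving $\ge mnq/2$. If $n$ is odd but $m$ is even, pair along columns instead. If $m,n$ are both odd, use fractional covering instead: each row is the clique $K_n$, and summing $a_u+a_v\ge q$ over all pairs in a row gives $(n-1)\sum_{\text{row}}a\ge \binom n2 q$. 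Hence each row sum is at least $nq/2$, and the total is at least $mnq/2$. This clique-averaging argument works uniformly, so I would use it in all cases.

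\emph{Odd $q=2k+1$.} For the upper bound, multiply $(x_{1,1}\cdots x_{m,n})^k\in J^{(2k)}$ by a generator of $J$ of degree $m(n-1)$. Products of symbolic powers lie in the symbolic power of the sum, which gives degree $mnk+m(n-1)$, as claimed. For the lower bound, fix a row with exponents $a_1\le a_2\le\dots\le a_n$. The constraints within the row give $a_1+a_j\ge q$ for all $j\ge2$. If $a_1\ge k+1$, the row sum is at least $n(k+1)\ge nk+n-1$. If $a_1=k-c$ with $c\ge0$, then $a_j\ge k+1+c$ for $j\ge2$, so the row sum is at least $(k-c)+(n-1)(k+1+c)\ge nk+n-1$ because $n\ge2$. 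The case $n=1$ (so $m=1$) is trivial. Summing over the $m$ rows gives $\ge m(nk+n-1)=mn(q-1)/2+m(n-1)$.

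The main obstacle is this odd lower bound. One must check that the per-row minimum is exactly $nk+(n-1)$, that it uses only the row cliques, and that the inequality $(n-2)c\ge -1$-type slack holds in every case. Once both parities are settled, $\widehat\alpha(J_{m,n})=\lim\alpha(J^{(q)})/q=mn/2$, since the odd formula is $mnq/2+O(1)$.
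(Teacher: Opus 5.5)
Your argument is correct, and it takes a different route from the paper's.

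\textbf{The paper's route.} The paper computes only two values combinatorially: $\alpha(J_{m,n})=m(n-1)$, and $\alpha(J_{m,n}^{(2)})=mn$, the latter as the minimum size of a minimal vertex cover of the polarized graph $R_{m,n}^2$, counted row by row. It then invokes the Herzog--Hibi--Trung theorem that the vertex cover (symbolic Rees) algebra of a graph is generated in degree at most $2$. This gives $J_{m,n}^{(2k)}=(J_{m,n}^{(2)})^k$ and $J_{m,n}^{(2k+1)}=J_{m,n}(J_{m,n}^{(2)})^k$, and additivity of $\alpha$ on products finishes the proof.

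\textbf{Your route.} You treat every $q$ directly as an integer program on exponent vectors.
\begin{itemize}
\item The lower bound uses only the $m$ row cliques. In effect you compare $J_{m,n}^{(q)}$ with the symbolic power of the cover ideal of the disjoint union $mK_n$. Your sorted-row estimate $(k-c)+(n-1)(k+1+c)=nk+n-1+c(n-2)$ is right.
\item The upper bound is realized by exponent $k$ on every square plus $1$ off a set of $m$ non-attacking rooks. This is exactly where $m\le n$ enters.
\end{itemize}

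\textbf{Comparison.} Your argument is self-contained and avoids both polarization and the external generation theorem. It also shows transparently that $\alpha(J_{m,n}^{(q)})=m\,\alpha(J(K_n)^{(q)})$, so the rook graph is no harder than its row subgraph. The paper's route gives more: the structural decomposition of every symbolic power into $J_{m,n}$ and $J_{m,n}^{(2)}$, which is reusable beyond the initial degree. Note that the paper's row-by-row count in $R_{m,n}^2$ is essentially the $q=2$ instance of your row bound.

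\textbf{One correction.} The case $n=1$ is not ``trivial''; it has to be excluded. For $m=n=1$ the rook graph has no edges, so $J_{1,1}=R$ and $\alpha=0$, and the stated formula fails. Your row bound $nk+n-1$ genuinely needs $n\ge 2$, and the paper also assumes this tacitly.
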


\begin{proof}
By Lemma~\ref{lq-cl}, the initial degree of $J_{m,n}^{(2)}$ equals the minimum cardinality of a minimal vertex cover of $R_{m,n}^2$.

First, we show that there exists a minimal vertex cover of cardinality $mn$. Consider
$S = \left\{ x_{i,j,1} \mid 1 \le i \le m,\ 1 \le j \le n \right\}.$
Suppose that $S' \subset S$ is a vertex cover. If $x_{i,j,1} \in S \setminus S'$, then $x_{i,j',2} \in S'$ for every $1 \le j' \le n$ with $j' \ne j$, which is a contradiction. Therefore, $S$ is a minimal vertex cover, and $|S| = mn$.
For $1 \le r \le m$ and $1 \le s \le 2$, let $H_{r,s}$ denote the induced subgraph of $R_{m,n}^{2}$ on the vertex set $\{x_{r,j,s} \mid 1 \le j \le n\}$. Suppose, for contradiction, that there exists a minimal vertex cover $S'$ of $R_{m,n}^{2}$ such that $|S'| < mn$. Since $S'$ is a vertex cover, we have
$|H_{r,1} \cap S'| \ge n-1$
for all $1 \le r \le m$. Suppose that there exists $r$ such that $|H_{r,1} \cap S'| = n-1$. Without loss of generality, let $x_{r,t,1} \notin S'$. Then $|H_{r,2} \cap S'| \ge n-1$. Hence, $|S'| \ge mn$, a contradiction. Therefore, the minimum cardinality of a minimal vertex cover of $R_{m,n}^{2}$ is $mn$. Hence,
$\alpha\!\left(J(R_{m,n})^{(2)}\right) = mn.$
Also observe that if $S$ is a minimal vertex cover of $R_{m,n}^1$ then $|H_{r,1}\cap S|\ge n-1$ for all $r=1,\dots,m$ and there exist a minimal vertex cover of cardinality $m(n-1)$ of $R_{m,n}^1$. So $\alpha(J_{m,n}) = m(n-1).$ Now by \cite[Theorem~5.1]{HHT07}, for $q \ge 1$,
$J_{m,n}^{(2q')} = \left(J_{m,n}^{(2)}\right)^{q'}$
and
$J_{m,n}^{(2q'+1)} = J_{m,n}\left(J_{m,n}^{(2)}\right)^{q'}.$
Since $\alpha(J_{m,n}) = m(n-1)$ and $\alpha(J_{m,n}^{(2)}) = mn$, the asserted formula follows for every $q \ge 1$.
\end{proof}

\begin{remark}
It is known that the resurgence of a homogeneous ideal $I$ satisfies the Bocci-Harbourne bound \cite{BH10}
$\frac{\alpha(I)}{\widehat{\alpha}(I)} \le \rho(I),$
where
\[
\rho(I) = \sup\left\{ \frac{s}{t} \mid I^{(s)} \not\subseteq I^t \right\}.
\]
This naturally leads to the question posed by Jayanthan, Kumar, and Mukundan \cite[Question~4.13]{JKM22}: classify the graphs $G$ for which
$\rho(J(G)) = \frac{\alpha(J(G))}{\widehat{\alpha}(J(G))}.$
The rook graphs provide a natural family satisfying this equality. Indeed, $R_{m,n}$ is a perfect graph and, hence, by \cite[Corollary~4.6]{JKM22},
$\rho(J_{m,n}) = 2-\frac{2}{n} = \frac{2(n-1)}{n}.$
On the other hand, by Theorem~\ref{min-deg}, $\alpha(J_{m,n}) = m(n-1)$ and $\widehat{\alpha}(J_{m,n}) = \frac{mn}{2}$, and consequently
\[
\frac{\alpha(J_{m,n})}{\widehat{\alpha}(J_{m,n})} = \frac{2(n-1)}{n} = \rho(J_{m,n}).
\]
Thus, the rook graphs form an infinite family for which the Bocci-Harbourne bound is attained.
\end{remark}

The following theorem determines the $\vv$-number of the symbolic powers of $J_{m,n}$. The formula depends on the parity of $q$ and, in the square case, requires a separate expression.

\begin{theorem}\label{asyv}
\begin{enumerate}
\item If $m \ne n$, then
\[
\vv\!\left(J_{m,n}^{(q)}\right) =
\begin{cases}
m(n-1)-1+\dfrac{q-1}{2}mn, & \text{if } q \text{ is odd},\\[2mm]
m+n-4+\dfrac{q}{2}mn, & \text{if } q \text{ is even}.
\end{cases}
\]

\item If $m=n$, then
\[
\vv\!\left(J_{m,n}^{(q)}\right) =
\begin{cases}
\dfrac{m^2q+m^2-2m}{2}, & \text{if } q \text{ is odd},\\[2mm]
\dfrac{q}{2}m^2+2m-4, & \text{if } q \text{ is even}.
\end{cases}
\]
\end{enumerate}
\end{theorem}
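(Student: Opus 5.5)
The plan is to compute $\vv(J_{m,n}^{(q)})$ through the standard description of the $\vv$-number of a monomial ideal in terms of its associated primes. Since $J_{m,n}^{(q)}=\bigcap_{\{u,w\}\in E(R_{m,n})}(u,w)^q$, the associated primes are exactly the primes $(u,w)$ with $\{u,w\}$ an edge of the rook graph. By the Jaramillo--Villarreal description, $\vv(I)$ is the least degree of a monomial $f\notin I$ with $I:f=\mathfrak p$ for some $\mathfrak p\in\operatorname{Ass}(R/I)$.

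Fix an edge $e=\{u,w\}$ and write $f=\prod x_{i,j}^{a_{i,j}}$. I first translate $J_{m,n}^{(q)}:f=(u,w)$ into exponent conditions. The condition $f\notin(u,w)^q$ gives $a_u+a_w\le q-1$. The conditions $fu,fw\in J_{m,n}^{(q)}$ give $a_u+a_w\ge q-1$, and $a_x+a_y\ge q$ for every other edge $\{x,y\}$; for an edge through $u$ this comes from applying $fw$ to it. For a monomial $f$ with $f\notin (u,w)^q$, these conditions force the colon ideal to be exactly $(u,w)$. So $\vv(J_{m,n}^{(q)})$ is the minimum of $\sum a_{i,j}$ over nonnegative integer vectors with
\[
a_u+a_w=q-1,\qquad a_x+a_y\ge q \text{ for all } \{x,y\}\in E(R_{m,n})\setminus\{e\}.
\]
The automorphisms of $R_{m,n}$ reduce this to two choices of $e$ when $m<n$: a row edge $\{x_{1,1},x_{1,2}\}$ or a column edge $\{x_{1,1},x_{2,1}\}$. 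When $m=n$ the transpose symmetry leaves a single case.

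For the upper bounds I will exhibit explicit exponent vectors. For $q$ odd, I take $a=\frac{q-1}{2}$ on a set $T$ and $a=\frac{q+1}{2}$ off $T$. Here $T$ is a maximal independent set through $u$ together with $w$ adjusted, or equivalently the complement of a largest minimal vertex cover of the form used in Theorem~\ref{deg-sym}, and I arrange the $u,w$ adjustment to save one degree. This should give $m(n-1)-1+\frac{q-1}{2}mn$ when $m<n$, and a different count when $m=n$ because a perfect rook placement exists. For $q$ even, I take the baseline $a\equiv q/2$, which gives the degree $qmn/2$ of $\alpha$ from Theorem~\ref{min-deg}. I then lower $a_u$ to $q/2-1$ and compensate by raising every neighbor of $u$ other than $w$ to $q/2+1$. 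For a row edge that is $(n-2)+(m-1)$ raised vertices, for a net change of $m+n-4$. The column edge gives the same count, and when $m=n$ one compares with $2m-4$.

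The lower bounds are the main obstacle. My approach is to sum the edge inequalities over a suitable family of edges covering the vertex set, as in the proof of Theorem~\ref{min-deg}, where rows are split into $H_{r,s}$. For $q$ even, I will cover each row by disjoint edges, plus one triangle when $n$ is odd, using $a_x+a_y+a_z\ge 3q/2$. This gives $\sum a\ge qmn/2$. I then show a deficit of one at $e$ forces the extra $m+n-3$ units: any neighbor $x$ of $u$ (other than $w$) with $a_x\le q/2$ forces $a_u\ge q/2$. That in turn forces $a_w\le q/2-1$ and hence pushes the deficit onto $w$'s neighbors, and a case analysis on $\min(a_u,a_w)$ closes it. For $q$ odd, the parity obstruction means the half-integral solution is not available. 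Here I will reduce to the count $|U|\ge qm$ of maximal independent sets from Lemmas~\ref{1-lem}--\ref{main-lemma}, via polarization and Lemma~\ref{lq-cl}. I expect the odd case with $m=n$ to need a separate argument, using that a perfect rook placement lets the deficit be absorbed more cheaply. I expect this case analysis to be the most delicate part.
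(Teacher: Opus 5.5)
Your reduction to an integer program is correct: minimize $\sum a_{i,j}$ subject to $a_u+a_w=q-1$ and $a_x+a_y\ge q$ on every other edge. It is also a cleaner starting point than the paper's passage to the polarization $R^q_{m,n}$. Your constructions give the right upper bounds. For $m=n$ and $q$ odd, note that $T$ has only $m$ elements, because $w$'s column is already occupied by the perfect placement. Your even-$q$ lower bound essentially works. If $a_u<a_w$, then $a_u\le q/2-1$ and the other $m+n-3$ neighbours of $u$ are at least $q/2+1$. The remaining copy of $R_{m-1,n-1}$ contributes at least $q(m-1)(n-1)/2$; only $m=n=2$, where that remainder is a single vertex, needs a direct check.

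\textbf{The gap: the odd-$q$ lower bound.} Lemmas~\ref{1-lem}--\ref{main-lemma} show that every \emph{maximal} independent set of $R^q_{m,n}$ has at least $qm$ elements. That is an upper bound on the size of minimal vertex covers, which is the input for $\omega$, and it points the wrong way here. A lower bound on $\vv$ needs an \emph{upper} bound on $|S|$ for sets $S$ inducing exactly one edge, equivalently a lower bound on $\sum a$. Those lemmas say nothing about this, so that step of your plan does not go through. Your heuristic for $m=n$ is also reversed: the deficit edge buys no saving there. That is why the odd diagonal value is one more than the off-diagonal formula would predict.

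\textbf{The missing idea: integrality along lines of the board.} Put $b_x=\frac{q+1}{2}-a_x$. Then $b_x+b_y\le 1$ on all edges except $e$, where $b_u+b_w=2$.
\begin{itemize}
\item Every row and column is a clique of $R_{m,n}$, and the $b_x$ are integers. If the largest value on a line is $M\ge1$, every other entry there is at most $1-M\le 0$. So each line has $b$-sum at most $1$, and the line containing $e$ has $b$-sum at most $2$.
\item For $m<n$, summing over rows gives $\sum b\le m+1$, i.e.\ $\sum a\ge\frac{q+1}{2}mn-m-1$, the claimed value. This also follows from the general inequality $\vv(I)\ge\alpha(I)-1$ together with Theorem~\ref{min-deg}.
\item For $m=n$, transpose so that $e$ is a row edge and sum over the columns, none of which contains $e$. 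This gives $\sum b\le m$, hence $\sum a\ge\frac{q+1}{2}m^2-m=\frac{m^2q+m^2-2m}{2}$.
\end{itemize}
The paper's row-by-row estimates $|G(i)\cap S'|\le\frac{q-1}{2}n+1$ (and $+2$ for the row carrying the edge) are this same bound in the polarized language.
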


\begin{proof}
We use the following characterization of the $\vv$-number for cover ideals \cite[Remark~2.5]{Sa24}. Let $\mathcal{C}_G$ denote the collection of vertex covers of $G$. Then
\[
\vv(J(G)) = \min\left\{ |C| : C \notin \mathcal{C}_G,\ C \cup \{x_i\},\ C \cup \{x_j\} \in \mathcal{C}_G \text{ for some } \{x_i,x_j\} \in E(G) \right\}.
\]
Moreover, in this case,
$(J(G) : x_C) = (x_i, x_j).$

\medskip
\noindent\textbf{Case 1:} $q$ is odd.

\medskip
\noindent\textit{Subcase 1.1:} $m \ne n$.

Set $G = R_{m,n}^q$. We first show that there exists an induced subgraph of $G$ with exactly one edge and cardinality $\frac{q-1}{2}mn+m+1$. Consider the induced subgraph $G[S]$, where
\[
S = \left\{ x_{i,j,k} \mid 1 \le i \le m,\ 1 \le j \le n,\ \tfrac{q+3}{2} \le k \le q \right\} \cup \left\{ x_{r,r+1,\frac{q+1}{2}} \mid 1 \le r \le m \right\} \cup \left\{ x_{1,1,\frac{q+1}{2}} \right\}.
\]
Then $|E(G[S])| = 1$ and
$|S| = \left( \frac{q-1}{2} \right) mn + m + 1.$
In particular, consider the first set $\emptyset$ when $q=1$. We show that this is maximal. Suppose, to the contrary, that there exists an induced subgraph $G[S']$ such that $|E(G[S'])| = 1$ and
$|S'| > \left( \frac{q-1}{2} \right) mn + m + 1.$
Let $\{x_{i,j,k}, x_{p,q',r}\}$ be the unique edge of $G[S']$. For $1 \le i_0 \le m$, let $G(i_0)$ denote the induced subgraph of $G$ on
$\{x_{i_0,j,k} \mid 1 \le j \le n,\ 1 \le k \le q\}.$

We first prove the following claim.

\medskip
\noindent\emph{Claim.} Suppose $x_{i',j',k'} \in S'$ with $1 \le k' \le \frac{q-1}{2}$ and $H_{i',t}\cap S'=\varnothing$ for all $1\le t <k'$ and $G(i')$ does not contain the unique edge of $S'$, then
$|G(i') \cap S'| \le \left( \frac{q-3}{2} \right)n + 3.$

\begin{proof}[Proof of the claim]
Since $x_{i',j',k'} \in S'$ and $1 \le k' \le \frac{q-1}{2}$, the condition $|E(G[S'])| = 1$ implies
\[
|G(i') \cap S'| \le \bigl( (q+1-k') - k' + 1 \bigr) + (k'-1)n.
\]
Consequently,
\[
\begin{aligned}
|G(i') \cap S'| &\le q - 2(k'-1) + (k'-1)n 
= (k'-1)(n-2) + q \\
&\le \left( \frac{q-3}{2} \right)(n-2) + q 
= \left( \frac{q-3}{2} \right)n + 3.
\end{aligned}
\]
This proves the claim.
\end{proof}
Suppose $x_{i',j',k'} \in S'$. Observe that if $H_{i',t} \cap S' = \varnothing$ either for all $1 \le t < \frac{q+1}{2}$, or for $1 \le t \le \frac{q+1}{2}$ and $G(i')$ does not contain the unique edge of $S'$, then in both cases $|G(i') \cap S'| \le \left(\frac{q-1}{2}\right)n + 1$.
Also observe that, 
$\left( \frac{q-3}{2} \right)n + 3 \le \left( \frac{q-1}{2} \right)n + 1$.

Now suppose that $i \ne p$. Then,
$|G(i) \cap S'| \le \left( \frac{q-1}{2} \right)n + 1$
for every $1 \le i \le m$, which gives the contradiction $|S'|\le \left( \frac{q-1}{2} \right) mn + m $. Therefore, $i = p$. \\
It follows that
$|G(i) \cap S'| \le \left( \frac{q-1}{2} \right)n + 2,$
whereas
$|G(j) \cap S'| \le \left( \frac{q-1}{2} \right)n + 1$
for all $j \ne i$. Consequently,
$|S'| \le \left( \frac{q-1}{2} \right)mn + m + 1,$
again a contradiction. Therefore,
\[
\max\Bigl\{ |S| : S \subseteq V(G),\ |E(G[S])| = 1 \Bigr\} = \left( \frac{q-1}{2} \right)mn + m + 1.
\]
By the characterization of the $\vv$-number,
\[
\begin{aligned}
\vv\!\left(J_{m,n}^{(q)}\right) &= qmn - \left( \left( \frac{q-1}{2} \right)mn + m + 1 \right) 
= m(n-1)-1+\frac{q-1}{2}mn.
\end{aligned}
\]
\smallskip

\medskip
\noindent\textit{Subcase 1.2: $m=n$.}

We first show that there exists an induced subgraph of $G$ with exactly one edge and cardinality $\left( \frac{q-1}{2} \right)m^2+m$. Consider the induced subgraph $G[S]$, where
\[
\begin{aligned}
S ={}& \left\{ x_{i,j,k} \mid 1 \le i,j \le m,\ \tfrac{q+3}{2} \le k \le q \right\} 
\cup \left\{ x_{1,1,\frac{q+1}{2}},\ x_{1,2,\frac{q+1}{2}},\ x_{2,3,\frac{q+1}{2}},\ \ldots,\ x_{m-1,m,\frac{q+1}{2}} \right\}.
\end{aligned}
\]
Then $|E(G[S])| = 1$ and
$|S| = \left( \frac{q-1}{2} \right)m^2 + m.$
We show that this is maximal. Suppose, to the contrary, that there exists an induced subgraph $G[S']$ such that $|E(G[S'])| = 1$ and
$|S'| > \left( \frac{q-1}{2} \right)m^2 + m.$
Let $\{x_{i,j,k}, x_{p,q',r}\}$ be the unique edge of $G[S']$. For $1 \le i_0 \le m$, let $G(i_0)$ denote the induced subgraph of $G$ on
$\{x_{i_0,j,k} \mid 1 \le j \le m,\ 1 \le k \le q\}.$
The following claim is identical to that in \textit{Subcase~1.1.}
\medskip

\noindent\emph{Claim.}
Suppose $x_{i',j',k'}\in S'$ with
$1\le k'\le\frac{q-1}{2}$ and $H_{i',t}\cap S'=\varnothing$ for all $1\le t < k'$ and $G(i')$ does not contain the unique edge of $S'$, then
$|G(i')\cap S'|
\le
\left(\frac{q-3}{2}\right)m+3
\le
\left(\frac{q-1}{2}\right)m+1.$

\medskip

Suppose that $i \ne p$. Then
$|G(i) \cap S'| \le \left( \frac{q-1}{2} \right)m + 1$
for every $1 \le i \le m$, which gives a contradiction that $|S'|> \left( \frac{q-1}{2} \right)m^2 + m$. Therefore, $i = p$. It follows that
$|G(i) \cap S'| \le \left( \frac{q-1}{2} \right)m + 2.$
If $m\ge 3$ there exists a row $j \ne i$ such that
$|G(j) \cap S'| \le \left( \frac{q-1}{2} \right)m.$
Consequently,
$|S'| \le \left( \frac{q-1}{2} \right)m^2 + m,$
again a contradiction. If $m=2$ and if $|G(i)\cap S'| \le \left( \frac{q-1}{2} \right)m + 1,$ then also we get the same contradiction. Now let $m=2$ and $|G(i)\cap S'| = \left( \frac{q-1}{2} \right)m + 2,$ then $|G(j)\cap S'| \leq \left( \frac{q-1}{2} \right)m$ for $i\not= j$ so again we get the same contradiction. Therefore,
\[
\max\Bigl\{ |S| : S \subseteq V(G),\ |E(G[S])| = 1 \Bigr\} = \left( \frac{q-1}{2} \right)m^2 + m.
\]
By the characterization of the $\vv$-number,
\[
\begin{aligned}
\vv\!\left(J_{m,n}^{(q)}\right) &= qm^2 - \left( \left( \frac{q-1}{2} \right)m^2 + m \right) 
= \frac{m^2q+m^2-2m}{2}.
\end{aligned}
\]

\medskip
\noindent\textbf{Case 2:} $q$ is even.

We first show that there exists an induced subgraph of $G$ with exactly one edge and cardinality $\frac{q}{2}mn-m-n+4$. Consider the induced subgraph $G[S]$, where
\[
\begin{aligned}
S ={}& \left\{ x_{i,j,k} \mid 1 \le i \le m,\ 1 \le j \le n,\ \tfrac{q+4}{2} \le k \le q \right\} 
\cup \left\{ x_{1,1,\frac{q}{2}},\ x_{1,1,\frac{q+2}{2}},\ x_{1,2,\frac{q+2}{2}} \right\} \\
&\cup \left\{ x_{r,s,\frac{q+2}{2}} \mid 2 \le r \le m,\ 2 \le s \le n \right\}.
\end{aligned}
\]
Then $|E(G[S])| = 1$ and
$|S| = \frac{q}{2}mn - m - n + 4.$
We show that this is maximal. Suppose, to the contrary, that there exists an induced subgraph $G[S']$ such that $|E(G[S'])| = 1$ and
$|S'| > \frac{q}{2}mn - m - n + 4.$
Let $\{x_{i,j,k}, x_{p,q',r}\}$ be the unique edge of $G[S']$. For $1 \le i_0 \le m$, let $G(i_0)$ denote the induced subgraph of $G$ on
$\{x_{i_0,j,k} \mid 1 \le j \le n,\ 1 \le k \le q\}.$

\medskip
\noindent\emph{Claim.} Suppose $x_{i',j',k'} \in S'$ with $k' \le \frac{q-2}{2}$ and $H_{i',t}\cap S'=\varnothing$ for all $1\le t <k'$ and $G(i')$ does not contain the unique edge of $S'$, then
$|G(i') \cap S'| \le \left( \frac{q-4}{2} \right)n + 4.$

\begin{proof}[Proof of the claim]
As in the proof of the claim in Subcase~1.1, the condition $|E(G[S'])| = 1$ gives
\[
|G(i') \cap S'| \le \{(q+1-k')-k'+1\} + \left( q-(q+1-k'+1)+1 \right)n = q + (k'-1)(n-2).
\]
Since $k' \le (q-2)/2$, we obtain
$|G(i') \cap S'| \le q + \left( \frac{q-4}{2} \right)(n-2) = \left( \frac{q-4}{2} \right)n + 4.$
This proves the claim.
\end{proof}

Now observe that if $n \ge 3$, then
$\left( \frac{q-4}{2} \right)n + 4 \le \frac{q}{2}n - 1 = \left( \frac{q-2}{2} \right)n + (n-1),$
and if $n \le 2$, that is, if $n=2$, then
$\left( \frac{q-4}{2} \right)n + 4 \le \frac{q}{2}n = \left( \frac{q-2}{2} \right)n + n.$

Let neither $r$ nor $k$ be equal to $\frac{q}{2}$. Then either $k$ or $r$ is at most $\frac{q-2}{2}$. Without loss of generality, let $k \le \frac{q-2}{2}$. Observe that for $i' \ne i$, if $x_{i',b,\frac{q}{2}} \in S' \cap G(i')$ and if $H_{i',u} \cap S' = \emptyset$ for all $1 \le u \le \frac{q}{2}-1$, then
\begin{align*}
    &|S' \cap G(i')| \le 2 + \left( (q+1-k) - \left( \frac{q}{2}+2 \right) + 1 \right)(n-1) + (k-1)n \\
    &= 2 + (k-1)n + \left( \frac{q}{2}-k \right)(n-1) \le \frac{q}{2}n - 1 = \left( \frac{q-2}{2} \right)n + (n-1).
\end{align*}
Also observe that for $i' \ne i$, if $x_{i',b,c} \in S' \cap G(i')$ and if $H_{j,u} \cap S' = \emptyset$ for all $1 \le u \le \frac{q}{2}$, then
$|S' \cap G(i')| \le (k-1)n + \left( \frac{q}{2}-k+1 \right)(n-1) \le \frac{q}{2}n - 1 = \left( \frac{q-2}{2} \right)n + (n-1).$
Now assume first $n \le 2$, that is, $n=2$. Then $m \le 2$. We have
$|S' \cap G(i)| \le \left( \frac{q-4}{2} \right)n + 4 \le \frac{q}{2}n.$
Also, since $n \le 2$, we have
$\left( \frac{q-2}{2} \right)n + (n-1) \le \left( \frac{q-2}{2} \right)n + 1 \le \frac{q}{2}n.$
So $|G(x) \cap S'| \le \frac{q}{2}n$ for all $x$. Hence

$|S'| \le \frac{qmn}{2} \le \frac{qmn}{2} - m - n + 4.$
Now assume $n \ge 3$. We have
$|G(i) \cap S'| \le \left( \frac{q-4}{2} \right)n + 4 \le \left( \frac{q-2}{2} \right)n + 3.$
Also, we have
$|G(j) \cap S'| \le \left( \frac{q-2}{2} \right)n + (n-1)$
for all $j \ne i$. Therefore,
$|S'| \le \left( \frac{q-2}{2} \right)mn + (m-1)(n-1) + 3 \le \frac{qmn}{2} - m - n + 4,$
which is a contradiction.
Hence at least one endpoint of the unique edge of $G[S']$ lies in the level $q/2$. Without loss of generality, let $k = \frac{q}{2}$. Now suppose that $p \ne i$ and $r \ne \frac{q+2}{2}$. Then $r \le \frac{q}{2}$. If $r < \frac{q}{2}$, then by the argument used previously, we obtain a contradiction. So let $r = \frac{q}{2}$. Without loss of generality, let $x_{i,j,\frac{q}{2}} = x_{1,1,\frac{q}{2}}$ and $x_{p,q',\frac{q}{2}} = x_{2,1,\frac{q}{2}}$. Then observe that $|G(i) \cap S'| \le \frac{q}{2}n - n+1$ for $i = 1, 2$, and if $n \ge 3$, then for $i\not=1,2$ $|G(i) \cap S'| \le \left(\frac{q-2}{2}\right)n + (n-1)$. So $|S'| \le \frac{qmn}{2} - m - n + 4$, which is a contradiction. Now let $n=2$ then $m=2$. Observe that $|G(i)\cap S'|\le q-1$ for $i=1,2$. Therefore $|S'|\le 2q-2< 2q=\frac{qmn}{2}-m-n+4$, which is a contradiction. So either $p = i$ or $r = \frac{q+2}{2}$. Now let $p \ne i$ and $r = \frac{q+2}{2}$. Without loss of generality, let $x_{i,j,\frac{q}{2}} = x_{1,2,\frac{q}{2}}$ and $x_{p,q',\frac{q+2}{2}} = x_{2,2,\frac{q+2}{2}}$. Now observe that $|G(1) \cap S'| \le \frac{q}{2}n + 2 - n$ and $|G(2) \cap S'| \le \frac{q}{2}n$. If $n \ge 3$, then for all $i \ne 1, 2$ we have $|G(i) \cap S'| \le \left(\frac{q-2}{2}\right)n + (n-1)$. Then $|S'| \le \frac{qmn}{2} - m - n + 4$, which is a contradiction. Now let $n=2$ then $m=2.$ Observe that for $i=1,2$ $|G(i)\cap S'|\le q$. So $|S'|\le 2q=\frac{qmn}{2}-m-n+4,$ which is a contradiction. Now let $p = i$ and $r \ne \frac{q+2}{2}$. Then $r \le \frac{q}{2}$. If $r < \frac{q}{2}$, then by the argument used previously, we obtain a contradiction. So let $r = \frac{q}{2}$. Without loss of generality, let $x_{i,j,\frac{q}{2}} = x_{1,1,\frac{q}{2}}$ and $x_{p,q',\frac{q}{2}} = x_{1,2,\frac{q}{2}}$. Then $|G(1) \cap S'| \le \frac{q}{2}n - n + 2$. If $n \ge 3$, then for all $i \ne 1$, $|G(i) \cap S'| \le \left(\frac{q-2}{2}\right)n + (n-1)$. Therefore $|S'| \le \frac{qmn}{2} - m - n + 4$, which is a contradiction.

Let $n = 2$ and $m = 2$. Then $|G(2) \cap S'| \le q - 2$. Therefore $|S'| \le q + q - 2 = 2q - 2 < 2q = \frac{qmn}{2} - m - n + 4$, which is a contradiction.

Let $n = 2$ and $m = 1$. Then $|S'| \le q < q + 1 = \frac{qmn}{2} - m - n + 4$, which is a contradiction. Therefore $p = i$ and $r = \frac{q+2}{2}$. In this case,
$|G(i) \cap S'| \le 3 + \left( \frac{q}{2}-1 \right)n,$
whereas
$|G(j) \cap S'| \le \left( \frac{q}{2}-1 \right)n + (n-1)$
for every $j \ne i$. Hence
$|S'| \le \frac{q}{2}mn - m - n + 4,$
again a contradiction. Therefore,
\[
\max\Bigl\{ |S| : S \subseteq V(G),\ |E(G[S])| = 1 \Bigr\} = \frac{q}{2}mn - m - n + 4.
\]
By the characterization of the $\vv$-number,
\[
\begin{aligned}
\vv\!\left(J_{m,n}^{(q)}\right) &= qmn - \left( \frac{q}{2}mn - m - n + 4 \right)
= \frac{q}{2}mn + m + n - 4.
\end{aligned}
\]
\end{proof}

The preceding formulas reveal two distinct asymptotic scales for the symbolic powers of $J_{m,n}$. On the one hand,
\[
\lim_{q \to \infty} \frac{\alpha(J_{m,n}^{(q)})}{q} = \lim_{q \to \infty} \frac{\vv(J_{m,n}^{(q)})}{q} = \frac{mn}{2} = \widehat{\alpha}(J_{m,n}),
\]
whereas
\[
\lim_{q \to \infty} \frac{\omega(J_{m,n}^{(q)})}{q} = \lim_{q \to \infty} \frac{\reg(R/J_{m,n}^{(q)})}{q} = m(n-1).
\]
Moreover,
\[
\omega(J_{m,n}^{(q)}) - \alpha(J_{m,n}^{(q)}) = \left\lfloor \frac{q}{2} \right\rfloor m(n-2),
\]
while
\[
\vv(J_{m,n}^{(q)}) - \alpha(J_{m,n}^{(q)}) =
\begin{cases}
-1, & \text{if } q \text{ is odd and } m \ne n,\\
0, & \text{if } q \text{ is odd and } m = n,\\
m+n-4, & \text{if } q \text{ is even}.
\end{cases}
\]
Thus, the $\vv$-number and the initial degree have the same asymptotic slope, namely $\frac{mn}{2}$, whereas the maximal degree of a minimal generator and the regularity have the larger asymptotic slope $m(n-1)$.

The precise relationships among the initial degree, the $\vv$-number, the maximal degree of a minimal generator, and the regularity are summarized in Table~\ref{comparison}.

\begin{table}[ht]
\centering
\renewcommand{\arraystretch}{1.3}
\begin{tabular}{|l|l|}
\hline
\textbf{Case} & \textbf{Relation among the invariants} \\
\hline
$m<n$, $q=1$, $n \ge 2m-1$
&
$\alpha-1=\vv=\omega-1=\reg$
\\
\hline
$m<n$, $q=1$, $m \le n<2m-1$
&
$\alpha-1=\vv=\omega-1<\reg$
\\
\hline
$m<n$, $q \ge 3$ odd
&
$\alpha-1=\vv<\omega-1=\reg$
\\
\hline
$(m,n,q)=(2,3,2)$
&
$\alpha-1<\vv=\omega-1=\reg$
\\
\hline
$m<n$, $q \ge 4$ even
&
$\alpha-1<\vv<\omega-1=\reg$
\\
\hline
$m=n$, $q=1$, $\reg(J)=m(m-1)+1$
&
$\alpha-1=\omega-1<\vv=\reg$
\\
\hline
$m=n$, $q=1$, $\reg(J)>m(m-1)+1$
&
$\alpha-1=\omega-1<\vv<\reg$
\\
\hline
$m=n=2$, $q \ge 2$
&
$\alpha-1=\omega-1<\vv=\reg$
\\
\hline
$m=n>2$, $q \ge 2$, $(m,q)=(3,2)$
&
$\alpha-1<\vv=\omega-1<\reg$
\\
\hline
$m=n>2$, $q \ge 2$, otherwise
&
$\alpha-1<\vv<\omega-1<\reg$
\\
\hline
\end{tabular}

\caption{Comparison of the invariants
$\alpha(J_{m,n}^{(q)})-1$,
$\vv(J_{m,n}^{(q)})$,
$\omega(J_{m,n}^{(q)})-1$, and
$\reg(R/J_{m,n}^{(q)})$.}\label{comparison}
\end{table}

\begin{remark}
\begin{enumerate}
\item Saha asked in \cite[Question~3.12]{Sa24} whether there exists a graph $G$ that is not complete multipartite and satisfies
\[
\vv(J(G)) > \omega(J(G))-1.
\]
The existence of such graphs was established in \cite[Example~8.3]{KMT25} and independently in \cite[Corollary~4.3]{VP26}. The diagonal chessboard complexes provide another family of examples. Indeed, by Theorem~\ref{vnumber-ordinary},
\[
\vv(J_{m,m}) > \omega(J_{m,m})-1 \quad \text{for all } m \ge 2.
\]

\item The formulas for the regularity and the $\vv$-number show that their behavior along the symbolic powers can be markedly different. More precisely, it follows from Theorems~\ref{reg-sym} and~\ref{asyv} that for $m\ge 3,$
\[
\reg\!\left(R/J_{m,n}^{(q)}\right) - \vv\!\left(J_{m,n}^{(q)}\right) \longrightarrow \infty
\qquad \text{as } q \to \infty.
\]
In particular, the regularity and the $\vv$-number have different asymptotic growth rates along the symbolic powers.
\end{enumerate}
\end{remark}

The explicit formulas for the initial degree and the $\vv$-number also yield nonvanishing results for the graded Betti numbers. The following corollary gives a general lifting principle and, in particular, yields several nonvanishing Betti numbers of $R/J_{m,n}^{(q)}$.

\begin{corollary}\label{betti-nonvanishing}
Suppose that either $n \ge 2m-1$, or $m < n < 2m-1$ and $q$ is even. Set
$\delta_q = q(m+n-2).$
If
$\beta_{i,j}\!\left(R/J_{m-1,n-1}^{(q)}\right) \ne 0,$
then
\[
\beta_{i,j+\delta_q}\!\left(R/J_{m,n}^{(q)}\right) \ne 0
\quad \text{and} \quad
\beta_{i+1,j+\delta_q+1}\!\left(R/J_{m,n}^{(q)}\right) \ne 0.
\]
In particular, setting
$a_q = \alpha\!\left(J_{m-1,n-1}^{(q)}\right),$
$v_q = \vv\!\left(J_{m-1,n-1}^{(q)}\right),$
we have
\[
\beta_{1,a_q+\delta_q}\!\left(R/J_{m,n}^{(q)}\right) \ne 0,
\qquad
\beta_{2,a_q+\delta_q+1}\!\left(R/J_{m,n}^{(q)}\right) \ne 0,
\]
and
\[
\beta_{2,v_q+\delta_q+2}\!\left(R/J_{m,n}^{(q)}\right) \ne 0,
\qquad
\beta_{3,v_q+\delta_q+3}\!\left(R/J_{m,n}^{(q)}\right) \ne 0.
\]
\end{corollary}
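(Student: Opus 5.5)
The plan is to pass to polarizations and use the vertex $x_{m,n}$ (or any vertex) of the rook graph as a splitting vertex. Write $J = J_{m,n}$ and $G = R_{m,n}$. Let $x = x_{1,1}$, let $N = N_G(x)$, and set $u = \prod_{y\in N} y$, so that $\deg u = m+n-2$. Then $G\setminus N_G[x]\cong R_{m-1,n-1}$. Put $J' = J(G\setminus N_G[x])$, which is $J_{m-1,n-1}$ in the variables of the remaining $(m-1)\times(n-1)$ board.

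The key identity is the one used in Case~4 of the proof of Theorem~\ref{reg-sym} (namely \cite[Lemma~4.2]{fakhari2025castelnuovo}):
\[
J^{(q)} + (x) = u^{q}\, J'^{(q)} + (x).
\]
This gives the following chain of Betti number statements.
\begin{enumerate}
\item The Betti numbers of $R/(u^qJ'^{(q)})$ are those of $R/J'^{(q)}$, with every shift $j$ in homological degree $i\ge1$ moved to $j+q\deg u = j+\delta_q$. Here we use that $u$ is a monomial in variables disjoint from those of $J'$, so multiplication by $u^q$ is an isomorphism of ideals up to a degree shift.
\item Since $x$ is a nonzerodivisor modulo $u^qJ'^{(q)}$, we get a Koszul tensor product. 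Consequently,
\[
\beta_{i,j}\bigl(R/(J^{(q)}+(x))\bigr)=\beta_{i,j}(R/u^qJ'^{(q)})+\beta_{i-1,j-1}(R/u^qJ'^{(q)}).
\]
So both $\beta_{i,j+\delta_q}$ and $\beta_{i+1,j+\delta_q+1}$ of $R/(J^{(q)}+(x))$ are nonzero whenever $\beta_{i,j}(R/J'^{(q)})\ne0$.
\item It remains to transfer these nonzero Betti numbers from $R/(J^{(q)}+(x))$ to $R/J^{(q)}$.
\end{enumerate}

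Step~3 is the main obstacle. This is exactly where the hypothesis ($n\ge 2m-1$, or $m<n<2m-1$ with $q$ even) enters: by Theorem~\ref{main-classification} and its proof, $R_{m,n}^q$ is vertex decomposable with $x_{1,1,1}$ a shedding vertex, and $J^{(q)}$ has linear quotients (in particular it is componentwise linear). I would argue via polarization, as in Lemma~\ref{lq-cl}: $(J^{(q)})^{\pol}=J(R_{m,n}^q)$, and $(J^{(q)}+(x))$ polarizes, up to the extra variables, to the cover ideal of the graph in which the link at the polarized copies of $x$ is taken. For a vertex decomposable graph $H$ with shedding vertex $y$, the splitting $J(H) = y\,J(H\setminus y) + x_{N_H(y)}J(H\setminus N_H[y])$ is a Betti splitting. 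This follows from the linear-quotient order starting from the shedding vertex, together with the componentwise-linear Betti splitting criterion of Francisco--Hà--Van Tuyl. Hence the Betti numbers of $J(H\setminus N_H[y])$, shifted by $|N_H(y)|$, inject into those of $J(H)$, and so do the Koszul-doubled ones. I would first try to realize the doubled term directly: iterating over the $q$ polarized copies $x_{1,1,1},\dots,x_{1,1,q}$ produces the shift $\delta_q$, and the contribution of the colon by the last shedding vertex produces the pair $(i,j)$ and $(i+1,j+1)$. If that route fails, the fallback is a mapping-cone comparison on $0\to R/(J^{(q)}:x)(-1)\to R/J^{(q)}\to R/(J^{(q)}+(x))\to0$, where linearity of the relevant strands prevents cancellation.

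Finally, the particular statements follow by choosing specific nonzero Betti numbers of $R/J_{m-1,n-1}^{(q)}$. First, $\beta_{1,a_q}\ne0$ by the definition of the initial degree, using Theorem~\ref{min-deg}. Second, $\beta_{2,v_q+2}\ne0$ holds because a colon $J':f = (y,z)$ with $\deg f=v_q$ exhibits a first syzygy of degree $v_q+2$. This is the standard fact that $\vv(I)+2$ is a degree of a first syzygy, obtained from the Koszul relation between $fy$ and $fz$, which are minimal generators of $J'$ by the cover characterization in the proof of Theorem~\ref{asyv}. Applying the general principle to these two yields the four stated nonvanishings.
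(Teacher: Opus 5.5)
\textbf{The gap is in Step~3.} The tool you name there, the Betti splitting of $J(H)$ at a shedding vertex, is exactly what the paper uses (via \cite[Theorem~3.1]{MorKho16}). But you never actually apply it, and the mechanism you sketch for producing the shift $\delta_q$ is wrong. No iteration over $x_{1,1,1},\dots,x_{1,1,q}$ is needed. In $H=R^q_{m,n}$ the vertex $y=x_{1,1,1}$ is adjacent to $x_{i,j,p}$ for every rook-neighbour $x_{i,j}$ of $x_{1,1}$ and \emph{every} $1\le p\le q$, since $1+p\le q+1$. Hence $|N_H(y)|=q(m+n-2)=\delta_q$, and $H\setminus N_H[y]$ is $R^q_{m-1,n-1}$ together with the isolated vertices $x_{1,1,2},\dots,x_{1,1,q}$. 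A single splitting $J(H)=y\,J(H\setminus y)+x_{N_H(y)}J(H\setminus N_H[y])$, whose pieces intersect in $y\,x_{N_H(y)}J(H\setminus N_H[y])$, gives
\[
\beta_{i,j}\bigl(J(H)\bigr)=\beta_{i,j-1}\bigl(J(H\setminus y)\bigr)+\beta_{i,j-\delta_q}\bigl(J(H\setminus N_H[y])\bigr)+\beta_{i-1,j-\delta_q-1}\bigl(J(H\setminus N_H[y])\bigr).
\]
Both nonvanishings (for $i\ge1$) then follow from the second and third summands and the invariance of graded Betti numbers under polarization. By contrast, iterating along the deletion side would shed at $x_{1,1,p}$ in $H\setminus\{x_{1,1,1},\dots,x_{1,1,p-1}\}$. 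That vertex has a neighbourhood of size $(q+1-p)(m+n-2)$, not $\delta_q$, and you have not shown it is a shedding vertex there.

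\textbf{Steps~1--2 and the fallback do not close the argument.} Steps~1--2 are correct but do not feed into the conclusion. Nonvanishing of $\beta_{i,j}(R/(J^{(q)}+(x)))$ does not by itself give nonvanishing of $\beta_{i,j}(R/J^{(q)})$. The mapping cone of $0\to R/(J^{(q)}:x)(-1)\to R/J^{(q)}\to R/(J^{(q)}+(x))\to 0$ can be non-minimal and cancel exactly those classes. Your fallback (``linearity of the relevant strands prevents cancellation'') is not an argument. After polarizing, minimality of that mapping cone is equivalent to the splitting formula above, so you must invoke that result anyway, and then Steps~1--2 become redundant.

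\textbf{The justification of $\beta_{2,v_q+2}\neq0$ is flawed.} The cover characterization only says that $C\cup\{y\}$ and $C\cup\{z\}$ are vertex covers, not minimal ones. For example, take the path with edges $\{w,y\},\{y,z\}$: then $J=(y,wz)$, and $f=w$ realizes $\vv(J)=1$, but $wy$ is not a minimal generator. When $fy$ and $fz$ are both minimal generators, your conclusion does hold: $y$ is an isolated vertex of the upper Koszul complex at $fyz$. In general, though, this nonvanishing needs a separate argument or a citation, as the paper does with \cite[Corollary~5.3]{KMT25}.
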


\begin{proof}
By Theorem~\ref{main-classification}, $R_{m,n}^q$ is vertex decomposable under either of the stated hypotheses. Moreover, by the proof of that theorem, $x_{1,1,1}$ is a shedding vertex. Set
$R_1 = R_{m,n}^q \setminus \{x_{1,1,1}\},~$
$R_2 = R_{m,n}^q \setminus N_{R_{m,n}^q}[x_{1,1,1}].$
Then $R_2 \cong R_{m-1,n-1}^q$. Hence, by \cite[Theorem~3.1]{MorKho16},
\[
\beta_{i,j}\bigl(R/J(R_{m,n}^q)\bigr) = \beta_{i,j-1}\bigl(R/J(R_1)\bigr) + \beta_{i,j-\delta_q}\bigl(R/J(R_2)\bigr) + \beta_{i-1,j-\delta_q-1}\bigl(R/J(R_2)\bigr).
\]
Suppose that $\beta_{i,j}\!\left(R/J_{m-1,n-1}^{(q)}\right) \ne 0$. Since $R_2 \cong R_{m-1,n-1}^q$, we have $\beta_{i,j}\!\left(R/J(R_2)\right) \ne 0$. Taking $(i,j+\delta_q)$ in the above recursion, we obtain
\[
\begin{aligned}
\beta_{i,j+\delta_q}\bigl(R/J(R_{m,n}^q)\bigr) &= \beta_{i,j+\delta_q-1}\bigl(R/J(R_1)\bigr) 
+ \beta_{i,j}\bigl(R/J(R_2)\bigr) + \beta_{i-1,j-1}\bigl(R/J(R_2)\bigr).
\end{aligned}
\]
The second summand is nonzero, and hence $\beta_{i,j+\delta_q}\!\left(R/J_{m,n}^{(q)}\right) \ne 0$. Similarly, taking $(i+1,j+\delta_q+1)$ gives
\[
\begin{aligned}
\beta_{i+1,j+\delta_q+1}\bigl(R/J(R_{m,n}^q)\bigr) &= \beta_{i+1,j+\delta_q}\bigl(R/J(R_1)\bigr) 
+ \beta_{i+1,j+1}\bigl(R/J(R_2)\bigr) + \beta_{i,j}\bigl(R/J(R_2)\bigr).
\end{aligned}
\]
The third summand is nonzero. Therefore, $\beta_{i+1,j+\delta_q+1}\!\left(R/J_{m,n}^{(q)}\right) \ne 0$. This proves the general assertion.

For the particular statements, by the definition of $a_q$, $\beta_{1,a_q}\!\left(R/J_{m-1,n-1}^{(q)}\right) \ne 0$. Applying the general assertion with $(i,j) = (1,a_q)$ yields
\[
\beta_{1,a_q+\delta_q}\!\left(R/J_{m,n}^{(q)}\right) \ne 0
\quad \text{and} \quad
\beta_{2,a_q+\delta_q+1}\!\left(R/J_{m,n}^{(q)}\right) \ne 0.
\]
On the other hand, by \cite[Corollary~5.3]{KMT25} and the definition of $v_q$,
$\beta_{2,v_q+2}\!\left(R/J_{m-1,n-1}^{(q)}\right) \ne 0.$
Applying the general assertion with $(i,j) = (2,v_q+2)$ gives
\[
\beta_{2,v_q+\delta_q+2}\!\left(R/J_{m,n}^{(q)}\right) \ne 0
\quad \text{and} \quad
\beta_{3,v_q+\delta_q+3}\!\left(R/J_{m,n}^{(q)}\right) \ne 0.
\]
This completes the proof.
\end{proof}

We now determine the $\vv$-number of the ordinary powers. Its behavior is particularly simple: it is linear in the exponent, with the diagonal case $m=n$ exhibiting a distinct behavior from the off-diagonal case.

\begin{theorem}\label{vnumber-ordinary}
For every $q \ge 1$,
\[
\vv\!\left(J_{m,n}^q\right) =
\begin{cases}
qm(n-1)-1, & \text{if } m \ne n,\\[2mm]
qm(n-1), & \text{if } m = n.
\end{cases}
\]
\end{theorem}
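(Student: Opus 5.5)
The plan is to match a general lower bound with an explicit witness. The lower bound comes from the initial degree; the witness is built from a witness for $J_{m,n}$ itself, and the diagonal case needs one extra row/column counting argument.

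\emph{Lower bound for $m\ne n$.} Every associated prime of the monomial ideal $J_{m,n}^q$ is generated by variables. Each such prime contains a minimal prime $(x_a,x_b)$, where $\{x_a,x_b\}$ is an edge of $R_{m,n}$, so it has height at least $2$. Suppose $J_{m,n}^q:f=\mathfrak p\in\operatorname{Ass}(R/J_{m,n}^q)$; we may take $f$ to be a monomial. Then $f\notin J_{m,n}^q$ and $x f\in J_{m,n}^q$ for some variable $x\in\mathfrak p$. Hence
\[
\deg f\ge \alpha(J_{m,n}^q)-1=q\,\alpha(J_{m,n})-1=qm(n-1)-1,
\]
using $\alpha(J_{m,n})=m(n-1)$ from the proof of Theorem~\ref{min-deg}.

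\emph{Upper bound.} I would start from a witness for $J=J_{m,n}$ supplied by the $q=1$ case of Theorem~\ref{asyv}. Via \cite[Remark~2.5]{Sa24}, it is a non-cover $C$ with $J:x_C=(x_a,x_b)$ for an edge $\{x_a,x_b\}$. It has $|C|=m(n-1)-1$ if $m\neq n$ and $|C|=m(m-1)$ if $m=n$.
\begin{itemize}
\item I would choose a minimal generator $u$ of $J$ (a minimal vertex cover of $R_{m,n}$) such that $u\ge x_C$ coordinatewise and $u$ "agrees" with $C$ near the edge. For example, with the explicit $S$ of Subcase~1.1, I would take $u$ to be the complement of a rook placement that avoids the edge.
\item Then I would set $f=u^{q-1}x_C$, which has the required degree $qm(n-1)-1$, respectively $qm(m-1)$.
\item Finally I would check $J^q:f=(x_a,x_b)$. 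The inclusion $\supseteq$ holds because $x_af=u^{q-1}(x_ax_C)$ and $x_ax_C\in J$, and similarly for $x_b$. For $\subseteq$, I would localize, or equivalently compare exponents on the variables outside $\{x_a,x_b\}$: a monomial $g\notin(x_a,x_b)$ with $gf\in J^q$ would produce $q$ covers dividing $gu^{q-1}x_C$, and an exponent count on the row/column containing the edge would then force $g\,x_C\in J$, which is false.
\end{itemize}
This exponent bookkeeping is routine but must be done carefully; it is where the specific choice of $u$ matters.

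\emph{Main obstacle: the lower bound for $m=n$.} Here I must exclude a monomial witness $f$ of degree $qm(m-1)-1$. Suppose $J_{m,m}^q:f=\mathfrak p$. By the height remark, $\mathfrak p\supseteq(x_a,x_b)$ for two distinct variables, so $x_af,\,x_bf\in J_{m,m}^q$ in degree $qm(m-1)=\alpha(J_{m,m}^q)$. Every generator of $J_{m,m}$ in degree $m(m-1)$ is the complement of a full permutation placement, so it has exactly $m-1$ squares in each row and in each column. Hence every monomial of $J_{m,m}^q$ of degree $qm(m-1)$ has row sums and column sums all equal to $q(m-1)$.
\begin{itemize}
\item Applying this to $x_af$: $f$ has all row and column sums $q(m-1)$, except the row and the column of $x_a$, where the sum is $q(m-1)-1$.
\item Applying it to $x_bf$ gives the same conclusion with $x_b$ in place of $x_a$.
\end{itemize}
So $x_a$ and $x_b$ share both their row and their column, forcing $a=b$, a contradiction. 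Thus $\vv(J_{m,m}^q)\ge qm(m-1)$, and together with the upper bound this yields equality.
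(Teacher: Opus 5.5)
Your lower bounds are fine. For $m\neq n$ you use the standard bound $\vv(I)\ge\alpha(I)-1$. For $m=n$, your row-and-column deficit argument is the same mechanism as the paper's: the paper replaces $x_{1,1}$ by $x_{1,2}$ in one factor of $x_{1,1}u$ and counts column $1$. Your symmetric version is slightly cleaner, since it needs only two distinct variables in $\mathfrak p$ rather than the normalization $\mathfrak p=(x_{1,1},\dots,x_{1,p})$.

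The genuine difference is the upper bound. The paper argues that $R_{m,n}$ is perfect, so the Rees algebra of $J_{m,n}$ is normal and $J_{m,n}$ has strong persistence. It then invokes \cite[Proposition~3.4]{BMS24} for $m\ne n$, and the inequality $\vv(I^{q+1})\le\vv(I^q)+\omega(I)$ from \cite[Remark~5.4]{Fi25} for $m=n$. You instead write down the witness $f=u^{q-1}x_C$ directly. That is more elementary and avoids all of this machinery.

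However, your condition on $u$ is the wrong one, and as written the diagonal case cannot be carried out. Requiring $u\ge x_C$ is impossible when $m=n$. There $|V\setminus C|=m$, so a permutation placement inside $V\setminus C$ would have to be all of $V\setminus C$, which contains the edge $\{x_a,x_b\}$. (For $m\ne n$ you have $|V\setminus C|=m+1$, and the condition happens to force the right thing.)

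What you actually need is that $u$ be divisible by exactly one of $x_a,x_b$. Take $u$ to be the complement of a maximal rook placement through $x_a$, not one avoiding both endpoints. If $u$ contains both endpoints, the count below fails for $q\ge 2$.

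With that choice, the inclusion $\subseteq$ takes one line. It concerns the exponents of $x_a,x_b$, not of the variables outside them, and needs no row or column bookkeeping. Since $J$ is squarefree and $x_C\notin J$, we have $x_a,x_b\notin C$. So $f$ has total degree exactly $q-1$ in $x_a,x_b$, and hence $J^q:f\subseteq(x_a,x_b)^q:f=(x_a,x_b)$.

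Repaired this way, your argument in fact gives $\vv(J(G)^q)\le\vv(J(G))+(q-1)\,\omega(J(G))$ for every graph $G$, with no appeal to normality or strong persistence. The paper's route is shorter only because it relies on those cited results.
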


\begin{proof}
Since $J_{m,n} = J(R_{m,n})$, and $R_{m,n}$ is the line graph of the complete bipartite graph $K_{m,n}$, the graph $R_{m,n}$ is perfect. Consequently, by \cite[Theorem~2.10]{Villa08}, the Rees algebra
$\mathcal{R}(J(R_{m,n})) = \bigoplus_{q \ge 0} J(R_{m,n})^q t^q$
is normal. Hence, by \cite[Corollary~1.6]{HQ15}, $J(R_{m,n})$ satisfies the strong persistence property.

Suppose first that $m \ne n$. By Theorem~\ref{asyv},
$\vv(J(R_{m,n})) = m(n-1)-1,$
while $\omega(J(R_{m,n})) = m(n-1)$. Therefore, \cite[Proposition~3.4]{BMS24} yields
$\vv(J(R_{m,n})^q) = qm(n-1)-1$
for every $q \ge 1$.

Now suppose that $m=n$. By \cite[Remark~5.4]{Fi25},
$\vv(J(R_{m,m})^{q+1}) \le \vv(J(R_{m,m})^q) + \omega(J(R_{m,m})).$
By Theorem~\ref{asyv},
$\omega(J(R_{m,m})) = \vv(J(R_{m,m})) = m(m-1).$
We first prove that $\vv(J(R_{m,m})^q) \le qm(m-1)$ for all $q \ge 1$. The assertion is clear for $q=1$. If it holds for some $q \ge 1$, then
\[
\begin{aligned}
\vv(J(R_{m,m})^{q+1}) &\le \vv(J(R_{m,m})^q) + \omega(J(R_{m,m})) \\
&\le qm(m-1) + m(m-1) 
= (q+1)m(m-1).
\end{aligned}
\]
Thus, $\vv(J(R_{m,m})^q) \le qm(m-1)$ for every $q \ge 1$.

It remains to prove the reverse inequality. By symmetry, without loss of generality, let
\[
\mathfrak{p} = (x_{1,1}, \ldots, x_{1,p}) \in \operatorname{Ass}(R/J(R_{m,m})^q), ~ p \ge 2,
\]
and let $u$ be a monomial such that $(J(R_{m,m})^q : u) = \mathfrak{p}$. Since $x_{1,i} \in \mathfrak{p}$ for $1 \le i \le p$, we have $x_{1,i}u \in J(R_{m,m})^q$. Hence, for each $i$, there exists a minimal generator $m_i \in \mathcal{G}(J(R_{m,m})^q)$ such that
$\frac{m_i}{\gcd(m_i,u)} = x_{1,i}.$
Every minimal generator of $J(R_{m,m})^q$ has degree $qm(m-1)$. Consequently, $\deg\gcd(m_i,u) = qm(m-1)-1$, and hence $\deg(u) \ge qm(m-1)-1$.

Suppose, to the contrary, that $\deg(u) = qm(m-1)-1$. Then $x_{1,i}u \in \mathcal{G}(J(R_{m,m})^q)$ for $1 \le i \le p$. In particular, write
$x_{1,1}u = m_1' m_2' \cdots m_q',$
where $m_1', \ldots, m_q' \in \mathcal{G}(J(R_{m,m}))$. After relabeling the factors, we may assume that $x_{1,1} \mid m_1'$. Write $m_1' = x_{1,1} n_1'$. 
Let
$S_1 = \{x_{1,1}, x_{2,1}, \ldots, x_{m,1}\}.$
Every minimal generator of $J(R_{m,m})$ contains exactly $m-1$ vertices of $S_1$. Since $x_{1,1} \mid m_1'$, there is a unique vertex $y \in S_1 \setminus \{x_{1,1}\}$ which does not divide $m_1'$. In particular, $y \nmid n_1'$.
Now,
$x_{1,2}u = \widetilde{m}_1 m_2' \cdots m_q' \in \mathcal{G}\!\left(J(R_{m,m})^q\right)$
where $\widetilde{m}_1 = x_{1,2} n_1'$. However, $\widetilde{m}_1$ contains only $m-2$ vertices of $S_1$, whereas each of $m_2', \ldots, m_q'$ contains exactly $m-1$ vertices of $S_1$. Hence
\[
\sum_{x \in S_1} \deg_x(x_{1,2}u) = (m-2) + (q-1)(m-1) = q(m-1)-1.
\]
This contradicts the fact that every minimal generator of $J(R_{m,m})^q$ contains exactly $q(m-1)$ vertices of $S_1$, counted with multiplicity. Thus $\deg(u) \ne qm(m-1)-1$. Together with $\deg(u) \ge qm(m-1)-1$, this gives $\deg(u) \ge qm(m-1)$. Therefore,
$\vv(J(R_{m,m})^q) \ge qm(m-1).$
Combining this with the upper bound proves $\vv(J(R_{m,m})^q) = qm(m-1)$.
\end{proof}

\begin{remark}
For the family of cover ideals \(J_{m,n}\), the following two observations follow from our results.
\begin{enumerate}
\item In \cite[Question~5.3(b)]{FS24}, the following question was posed: is it true that
\[
\vv(I^k) < \vv(I^{k+1}) \quad \text{for all } k \ge 1?
\]
For the ideals \(J_{m,n}\), the answer is affirmative. Indeed, by Theorem~\ref{vnumber-ordinary},
\[
\vv(J_{m,n}^{q+1}) - \vv(J_{m,n}^q) = m(n-1) > 0 \quad \text{for all } q \ge 1.
\]
Thus,
$\vv(J_{m,n}^q) < \vv(J_{m,n}^{q+1}) ~ \text{for all } q \ge 1.$

\item For every fixed \(q \ge 2\), the difference
$\vv\!\left(J_{m,n}^q\right) - \vv\!\left(J_{m,n}^{(q)}\right)$
is unbounded as \(m,n\) increase. This follows from the explicit formulas in Theorems~\ref{asyv} and~\ref{vnumber-ordinary}.
\end{enumerate}
\end{remark}

The explicit formulas for the $\vv$-number, together with the regularity formulas above, yield the following comparison.

\begin{corollary}\label{vnumber-reg}
If $m \ne n$, then
$\vv\!\left(J_{m,n}^q\right) \le \reg\!\left(R/J_{m,n}^q\right)$
for every $q \ge 1$. If $m=n$, then
$\vv\!\left(J_{m,m}^2\right) \le \reg\!\left(R/J_{m,m}^2\right).$
\end{corollary}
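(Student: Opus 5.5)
Since every minimal generator of $J_{m,n}^q$ has degree $qm(n-1)$ (as $J_{m,n}$ is equigenerated in degree $m(n-1)$ by Theorem~\ref{deg-sym} with $q=1$ together with Theorem~\ref{min-deg}), we have $\reg(J_{m,n}^q)\ge qm(n-1)$, hence
\[
\reg\!\left(R/J_{m,n}^q\right)=\reg\!\left(J_{m,n}^q\right)-1\ge qm(n-1)-1 .
\]
This holds for every $(m,n,q)$ and needs no further input.

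\textbf{Case $m\ne n$.} We compare this lower bound with Theorem~\ref{vnumber-ordinary}, which gives $\vv(J_{m,n}^q)=qm(n-1)-1$. The two quantities agree, so the inequality holds for every $q\ge1$.

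\textbf{Case $m=n$, $q=2$.} Here Theorem~\ref{vnumber-ordinary} gives $\vv(J_{m,m}^2)=2m(m-1)$, which exceeds the trivial bound by one. We therefore need the strict inequality $\reg(J_{m,m}^2)\ge 2m(m-1)+1$. This follows from Theorem~\ref{equal-not}: $J_{m,m}^2$ is generated in the single degree $2m(m-1)$ but has no linear resolution. Hence some $\beta_{i,j}(J_{m,m}^2)\ne0$ with $j-i>2m(m-1)$. (A nonlinear Betti number of an equigenerated ideal always has $j-i$ above the generating degree, since minimality of the resolution forces $j-i\ge d$ throughout.) So $\reg(R/J_{m,m}^2)\ge 2m(m-1)=\vv(J_{m,m}^2)$.

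The only step needing any care is the last one: justifying that failure of a linear resolution raises the regularity strictly above the generating degree. This is standard, via the fact that in a minimal resolution $\beta_{i,j}=0$ for $j<i+d$. The same argument does not extend to $q\ge3$ when $m=n$, because we have no statement that $J_{m,m}^q$ fails to have a linear resolution there. That is why the corollary is restricted to $q=2$ in the diagonal case.
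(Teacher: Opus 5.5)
Your proof is correct and follows the same route as the paper. For $m\ne n$, you compare the trivial bound $\reg(J_{m,n}^q)\ge qm(n-1)$ with Theorem~\ref{vnumber-ordinary}, and in the diagonal case you invoke Theorem~\ref{equal-not} to get $\reg(J_{m,m}^2)\ge 2m(m-1)+1$; you also spell out the standard fact, left implicit in the paper, that an equigenerated ideal without a linear resolution has regularity strictly above its generating degree.
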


\begin{proof}
The first assertion follows immediately from Theorem~\ref{vnumber-ordinary} and the formula for $\reg(R/J_{m,n}^q)$. The second assertion follows from Theorem~\ref{equal-not}.
\end{proof}

The following examples show that the inequality in Corollary~\ref{vnumber-reg} can be strict.

\begin{example}
For $(m,n) = (4,5)$, we have
$\vv\!\left(J_{4,5}^2\right) = 31 < 32 = \reg\!\left(R/J_{4,5}^2\right).$
Similarly, for $m=n=4$,
$\vv\!\left(J_{4,4}^2\right) = 24 < 25 = \reg\!\left(R/J_{4,4}^2\right).$
Here, the regularity values are computed using \textsc{Macaulay2} \cite{M2} over a field of characteristic zero, while the values of the $\vv$-numbers follow from Theorem~\ref{vnumber-ordinary}.
\end{example}

\section{Weakly Polymatroidal Ideals}\label{wp-sec}

In this section, we investigate the weak polymatroidality of
$J_{m,n}$. Lu and Wang \cite{LW24} conjectured that
$I_{\Delta^\vee}$ is weakly polymatroidal whenever $\Delta$ is a pure
vertex decomposable simplicial complex; see Conjecture~\ref{lw-conj}.
Since, by \cite{Z94}, $\Delta_{m,n}$ is pure and vertex decomposable if
and only if $n\ge2m-1$, chessboard complexes provide a natural family
for testing this conjecture. We show that the conjecture fails for this
family.

\begin{theorem}\label{conter-ex}
For every $m\ge3$, the ideal $J_{m,2m-1}$ is not weakly
polymatroidal.
\end{theorem}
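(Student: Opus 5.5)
The plan is to translate weak polymatroidality of $J_{m,n}$ into a statement about rook placements, and then show that for an \emph{arbitrary} total order of the variables there is a pair of generators for which the exchange fails. First I identify the generators. Since $n\ge m$, every maximal non-attacking rook placement on the $m\times n$ board has one rook in each row. So the maximal independent sets of $R_{m,n}$ are exactly the placements $P$ with $|P|=m$. Hence $\mathcal G(J_{m,n})=\{x_P:=\prod_{x\notin P}x \mid P \text{ a full placement}\}$, squarefree and equigenerated of degree $m(n-1)$.

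Next I reformulate the exchange condition. Let $f=x_{P_f}$ and $g=x_{P_g}$, and suppose the first variable (in the chosen order) on which they differ is $x_t$, with $x_t\mid f$ and $x_t\nmid g$. This means $x_t\notin P_f$, $x_t\in P_g$, and $P_f,P_g$ agree on all variables preceding $x_t$. The monomial $x_t g/x_\ell$ is squarefree of degree $m(n-1)$, so it lies in $J_{m,n}$ iff it is a generator. That happens iff $(P_g\setminus\{x_t\})\cup\{x_\ell\}$ is a full placement. If $x_t=x_{r,c}$, this forces $x_\ell=x_{r,c'}$ with $c'\neq c$ a column not used by $P_g$. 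Thus the exchange fails precisely when all of the $n-m$ squares $x_{r,c'}$ in row $r$ lying in columns free in $P_g$ precede $x_t$ in the order. For $n=2m-1$ there are exactly $m-1$ such squares.

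The main step is then, for any order $\prec$, to construct $x_t=x_{r,c}$ and placements $P_g\ni x_t$ and $P_f\not\ni x_t$ such that:
\begin{enumerate}
\item $P_f$ and $P_g$ differ only on variables $\succeq x_t$;
\item the free columns of $P_g$ meet row $r$ only in squares $\prec x_t$.
\end{enumerate}
I intend to take the differing part to be a single $2\times2$ swap: $P_g\supseteq\{x_{r,c},x_{a,b}\}$ and $P_f\supseteq\{x_{r,b},x_{a,c}\}$, with the other $m-2$ rooks common. This requires $x_{r,c}\prec x_{a,b},x_{r,b},x_{a,c}$. The common rooks must occupy the $m-2$ remaining rows and must be placed so that the $m-1$ columns left free in row $r$ all carry squares $x_{r,c'}\prec x_{r,c}$. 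Here the hypothesis $n=2m-1$ enters. Among the $n-2$ columns other than $b,c$, the common rooks cover $m-2$, leaving exactly $m-1$ free ones. So I will choose $r$ and $c$ so that $x_{r,c}$ is the latest square of row $r$ apart from one column $b$. Then the $2m-3$ earlier squares of row $r$ give ample freedom to choose which $m-1$ columns stay free. I also need $a$ with $x_{a,b},x_{a,c}\succ x_{r,c}$.

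I expect the main obstacle to be the choice of $r,c,b,a$ relative to an arbitrary order, with the common rooks then completed. As the first attempt I will take $x_{r,c}$ to be the second-largest square of row $r$ (with $x_{r,b}$ the largest) for a suitable row $r$. I will choose $r$ by an extremal argument on the positions of the last two squares of each row, so that some other row $a$ has both its $b$- and $c$-squares later. Then I fill the remaining $m-2$ rows greedily, using columns among the $n-2$ others; this is possible since $m-2\le n-2$. The columns left uncovered are automatically free and lie before $x_{r,c}$ in row $r$. If the extremal choice of $a$ fails for some orders, I will handle that case separately. There I will exploit $m\ge3$ to pick a third row, replacing the $2\times2$ swap by a $3$-cycle swap among three rows and three columns. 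The case $m=2$ must genuinely fail, since Theorem~\ref{2-ord} shows $J_{2,n}$ is weakly polymatroidal, so the argument has to use $m\ge3$ somewhere.
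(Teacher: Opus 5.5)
\textbf{There is a genuine gap: the key existence step is unproved, and the pivot you plan to use provably fails for some orders.}

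Your translation is correct. If $x_t=x_{r,c}$ is the first square where $P_f$ and $P_g$ differ, with $x_t\in P_g\setminus P_f$, then the exchange fails exactly when every column left free by $P_g$ meets row $r$ before $x_t$. The paper's argument rests on this same mechanism.

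The whole content of the theorem, however, is the existence of such a pivot and two placements for an \emph{arbitrary} order, and your proposal only announces this step. Worse, the pivot you commit to cannot work in general. That pivot is the second-to-last square $x_{r,c_r}$ of a row whose last square is $x_{r,b_r}$. Neither an extremal choice of $r$ nor a switch to $3$-cycles repairs it:
\begin{enumerate}
\item With this pivot, $P_f$ must put row $r$ on $x_{r,b_r}$, the only later square of row $r$.
\item Column $b_r$ cannot be free in $P_g$, since otherwise $x_\ell=x_{r,b_r}$ performs the exchange.
\item So $P_g$ contains some $x_{a,b_r}$ with $a\ne r$. This square lies in $P_g\setminus P_f$, hence must come after the pivot.
\end{enumerate}
Now take $m=3$, $n=5$ and let $T=\{x_{1,1},x_{1,2},x_{2,3},x_{2,4},x_{3,1},x_{3,5}\}$. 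List first the nine squares outside $T$ in any order, then $x_{1,1}\prec x_{2,3}\prec x_{3,1}\prec x_{1,2}\prec x_{2,4}\prec x_{3,5}$. The last columns are $b_1=2$, $b_2=4$, $b_3=5$, and each meets $T$ only in its own row. Hence every $x_{a,b_r}$ with $a\ne r$ precedes every second-to-last square. So no violating pair at all has a pivot of your type, for any $r$ and any shape of $P_f\triangle P_g$. A violating pair does exist for this order, but with a different pivot.

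The missing idea is to pivot at the opposite extreme, as the paper does. Let $S$ consist of the first $m-1$ squares of each row, and take $x_{1,r_1}=\min(V\setminus S)$. This is the earliest square having exactly $m-1$ later squares in its own row.
\begin{itemize}
\item Because $n=2m-1$, any witnessing $g$ must place rows $2,\dots,m$ on the later columns $r_2,\dots,r_m$ of row $1$.
\item Everything before the pivot lies in $S$, so every other row has at least $m$ squares after the pivot.
\item If some $x_{i,r_j}$ with $i,j\ge 2$ follows the pivot, put it in $P_g$. Let $P_f$ move row $1$ to $r_j$, and move row $i$ to a column outside $\{r_2,\dots,r_m\}$ where row $i$ has a square after the pivot; one exists by counting. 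In general this is an alternating path, not a cycle. The paper's Case~1 instead swaps through $x_{2,r_1}$, which tacitly requires $x_{2,r_1}$ to follow the pivot; the path version removes that assumption.
\item Otherwise, rows $2,\dots,m$ have their last $m$ squares in the same $m$ columns. The paper's Case~2 then re-pivots at the earliest such square outside row $1$. It makes a $2\times2$ swap between rows $2$ and $3$ and parks row $1$ on a third column $q_3$. This is exactly where $m\ge3$ is used.
\end{itemize}
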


\begin{proof}
Suppose, to the contrary, that $J_{m,2m-1}$ is weakly polymatroidal.
Then there exists a total ordering $<$ on the variables such that
$\mathcal{G}(J_{m,2m-1})$ satisfies the weak exchange property with
respect to $<$. 
For each row of $\Delta_{m,2m-1}$, let $S$ denote the set of the first $m-1$ variables from each row with respect to $<$. Let
$x_{u,v}
=
\min\bigl(V(\Delta_{m,2m-1})\setminus S\bigr).$
By symmetry, we may assume that $x_{u,v}$ belongs to the first row.
Write the variables in the first row as
\[
x_{1,p_1}
<
x_{1,p_2}
<
\cdots
<
x_{1,p_{m-1}}
<
x_{1,r_1}
<
x_{1,r_2}
<
\cdots
<
x_{1,r_m},
\]
so that
$x_{u,v}=x_{1,r_1}.$
Set
$S'
=
\{x_{i,r_j}\mid 2\le i\le m,\ 2\le j\le m\}.$

We distinguish two cases.

\medskip

\noindent\textbf{Case 1.}
Suppose that
$S'\cap
\bigl(V(\Delta_{m,2m-1})\setminus S\bigr)
\ne\emptyset.$
By symmetry, we may assume that
$x_{2,r_2}\in
S'\cap
\bigl(V(\Delta_{m,2m-1})\setminus S\bigr).$
Consider the monomials
\[
u=
\prod_{t\ne r_2}x_{1,t}
\prod_{t\ne r_1}x_{2,t}
\prod_{t\ne r_3}x_{3,t}
\cdots
\prod_{t\ne r_m}x_{m,t}
\text{ and }
v=
\prod_{t\ne r_1}x_{1,t}
\prod_{t\ne r_2}x_{2,t}
\prod_{t\ne r_3}x_{3,t}
\cdots
\prod_{t\ne r_m}x_{m,t}.
\]
Then $
u,v\in\mathcal{G}(J_{m,2m-1}),
$ with $
\deg_{x_{1,r_1}}(u)=1
~ \text{and} ~
\deg_{x_{1,r_1}}(v)=0.
$
However, the pair $(u,v)$ violates the weak exchange property with respect to the chosen ordering. This contradicts the weak polymatroidality of
$J_{m,2m-1}$.

\medskip

\noindent\textbf{Case 2.}
Suppose that
$S'\cap
\bigl(V(\Delta_{m,2m-1})\setminus S\bigr)
=
\emptyset.$
Let
$x_{e,f}
=
\min\bigl(
V(\Delta_{m,2m-1})\setminus(R_1\cup S)
\bigr),$
where $R_1$ denotes the first row. By symmetry, we may assume that
$x_{e,f}=x_{2,p_j}.$
Relabel the last $m$ variables in the second row as
$x_{2,q_1}
<
x_{2,q_2}
<
\cdots
<
x_{2,q_m},
$
where $q_1=p_j$. Since $S$ consists of the first $m-1$ variables in
each row, we have
$x_{2,q_i}\notin S~(1\le i\le m).$
In particular,
$x_{3,q_2}\notin S$
and
$x_{2,q_1}<x_{3,q_2}.$
Now consider
\[
u=
\prod_{t\ne q_3}x_{1,t}
\prod_{t\ne q_2}x_{2,t}
\prod_{t\ne q_1}x_{3,t}
\prod_{t\ne q_4}x_{4,t}
\cdots
\prod_{t\ne q_m}x_{m,t}
\]
and
\[
v=
\prod_{t\ne q_3}x_{1,t}
\prod_{t\ne q_1}x_{2,t}
\prod_{t\ne q_2}x_{3,t}
\prod_{t\ne q_4}x_{4,t}
\cdots
\prod_{t\ne q_m}x_{m,t}.
\]
Again,
$u,v\in\mathcal{G}(J_{m,2m-1}),$
but the pair $(u,v)$ violates the weak exchange property with respect
to the chosen ordering. This contradicts the weak polymatroidality of
$J_{m,2m-1}$.

Thus, in either case, we obtain a contradiction. Hence
$J_{m,2m-1}$ is not weakly polymatroidal.
\end{proof}

\vspace*{1mm}
\noindent
\textbf{Acknowledgments.}
We thank Seyed Fakhari and Amir Mafi for their helpful discussions,
insightful comments, and clarifications. The second author was partially
supported by the National Board for Higher Mathematics (NBHM) and the
Science and Engineering Research Board (SERB).

\vspace*{1mm}
\noindent
\textbf{Data availability statement.}
Data sharing is not applicable to this article, as no datasets were
generated or analyzed during the current study.

\vspace*{1mm}
\noindent
\textbf{Conflict of interest.}
The authors declare that they have no competing interests or personal
relationships that could have influenced the work reported in this
paper.

\bibliographystyle{abbrv}
\bibliography{refs_reg} 
\end{document}